\theoremstyle{definition}
\newtheorem{defn}{Definition}[section]
\newtheorem{example}[defn]{Example}
\newtheorem{rem}[defn]{Remark}
\theoremstyle{plain}
\newtheorem{thm}[defn]{Theorem}
\newtheorem{prop}[defn]{Proposition}
\newtheorem{lem}[defn]{Lemma}
\newtheorem{cor}[defn]{Corollary}
\newtheorem{claim}[defn]{Claim}
\newcommand{\im}{\operatorname{Im}}
\newcommand{\KH}{\operatorname{KH}}
\numberwithin{equation}{section}
\title{The maximal degree of the Khovanov homology of a cable link}
\author{Keiji Tagami}
\date{\today}
\address{
Department of Mathematics,
Tokyo Institute of Technology,
Oh-okayama, Meguro, Tokyo 152-8551, Japan
}
\email{tagami.k.aa@m.titech.ac.jp}
\begin{document}
\maketitle
\begin{abstract}
In this paper, we study the Khovanov homology of cable links. 
We first estimate the maximal homological degree term of the Khovanov homology of the ($2k+1$, $(2k+1)n$)-torus link and give a lower bound of its homological thickness. 
Specifically, we show that the homological thickness of the ($2k+1$, $(2k+1)n$)-torus link is greater than or equal to $k^{2}n+2$. 
Next, we study the maximal homological degree of the Khovanov homology of the ($p$, $pn$)-cabling of any knot with sufficiently large $n$. 
Furthermore, we compute the maximal homological degree term of the Khovanov homology of such a link with even $p$. 
As an application we compute the Khovanov homology and the Rasmussen invariant of a twisted Whitehead double of any knot with sufficiently many twists. 
\end{abstract}
\section{Introduction}\label{intro}
A knot is an embedding of a circle into the $3$-sphere. 
A link is an embedding of a disjoint union of finitely many circles into the $3$-sphere. \par
In \cite{khovanov1}, for each link $L$, Khovanov defined a graded chain complex whose graded Euler characteristic is equal to the Jones polynomial of $L$. 
Its homology group is a link invariant and called the Khovanov homology. 
Khovanov homology has two gradings, homological degree $i$ and $q$-grading $j$.
In this paper, we denote the homological degree $i$ term of the Khovanov homology of $L$ by $\KH^{i} (L)$ 
and denote the homological degree $i$ and $q$-grading $j$ term of the Khovanov homology of $L$ by $\KH^{i,j}(L)$. 
\par
The $(p,q)$-cabling $K(p, q)$ of a knot $K$ is the satellite link with companion $K$ and pattern the $(p, q)$-torus link $T_{p, q}$. 
The Alexander polynomial of a cable link satisfies the following formula (see \cite{knot-gtm}). 
\begin{center}
$\Delta _{K(p,q)}(t)=\Delta _{K}(t^{p})\Delta _{T_{p, q}}(t). $
\end{center}
The Jones polynomial of a cabling of $K$ is expressed in terms of the colored Jones polynomial of $K$. 
Indeed, the colored Jones polynomial has a cabling formula (for example, see \cite{KM:1991}). 
However, there are few works about the Khovanov homology (which is a categorification of the Jones polynomial) of cable links. 
The $(2k, 2kn)$-torus link $T_{2k, 2kn}$ can be regarded as the $(2k, 2kn)$-cabling of the unknot and Sto{\v s}i{\'c} \cite{stosic2} showed that the maximal homological degree of the Khovanov homology of $T_{2k,2kn}$ is $2k^{2}n$ (Theorem~$\ref{thm1}$). Moreover, he computed the homological degree $2k^{2}n$ term (see Theorem~$\ref{thm2}$). 
\par
In this paper, we consider the $(p, pn)$-cabling of any knot. 
Our main results are Theorems~$\ref{newmainthm}$ and $\ref{newmainthm2}$ below. 
\par
We first determine the maximal homological degree of the Khovanov homology of the $(2k+1, (2k+1)n)$-torus link $T_{2k+1, (2k+1)n}$ by Sto{\v s}i{\'c}'s method. 
In addition, we determine the dimension of the maximal Khovanov homology of such a link. 
\begin{thm}\label{newmainthm}
Let k and n be positive integers. 
Denote the $(2k+1, (2k+1)n)$-torus link by $T_{2k+1, (2k+1)n}$. 
Assume that its orientation is given by the closure of the braid $(\sigma _{1}\cdots\sigma _{2k})^{(2k+1)n}$ with all crossings positive, where the $\sigma_{i}$ are the standard generators of the braid group $B_{2k+1}$. 
Then, for $i>2k(k+1)n$, we have 
\begin{center}
$\KH^{i}(T_{2k+1, (2k+1)n})=0$. 
\end{center}
On the other hand, 
\begin{align*}
\dim_{\mathbf{Q}}\KH^{2k(k+1)n}(T_{2k+1, (2k+1)n})=\begin{pmatrix}
2k+2\\
k+1
\end{pmatrix}.
\end{align*}
Moreover, for $i=0, \dots, k+1$, we have 
\begin{center}
$\KH^{2k(k+1)n, 6k(k+1)n+1-2i}(T_{2k+1, (2k+1)n})\neq 0$. 
\end{center}
\end{thm}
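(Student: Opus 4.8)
The plan is to follow Sto\v{s}i\'c's inductive scheme for even torus links (Theorems~\ref{thm1} and \ref{thm2}) and adapt it to the odd case. Throughout I work with the diagram $D$ given by the closure of $(\sigma_1\cdots\sigma_{2k})^{(2k+1)n}$; since all $2k(2k+1)n$ crossings are positive we have $n_-=0$, so the Khovanov complex is supported in homological degrees $0\le i\le 2k(2k+1)n$ with no homological shift. The engine is the unoriented skein long exact sequence: distinguishing one crossing $c$ of $D$, with oriented $0$-smoothing $D_0$ and $1$-smoothing $D_1$, one has
\begin{equation*}
\cdots \to \KH^{i,\,j-1}(D_0) \to \KH^{i,j}(D) \to \KH^{i-a,\,j-b}(D_1) \xrightarrow{\partial} \KH^{i+1,\,j-1}(D_0) \to \cdots,
\end{equation*}
where $a,b$ are the shifts coming from reorienting $D_1$. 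I would take $c$ to be the last crossing of the final block $\sigma_1\cdots\sigma_{2k}$; then $D_0$ is the closure of $(\sigma_1\cdots\sigma_{2k})^{(2k+1)n-1}(\sigma_1\cdots\sigma_{2k-1})$, while $D_1$, after Reidemeister moves, reduces to a torus-link diagram on fewer strands whose maximal degree is controlled by Theorem~\ref{thm1}. This yields a recursion whose two inputs are a braid with one fewer crossing and a thinner torus link.

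For the vanishing statement I would argue by a double induction on $k$ and $n$. In the long exact sequence $\KH(D)$ is squeezed between $\KH(D_0)$ and $\KH(D_1)$; tracking the homological shift $a$ and using the inductive maximal-degree bounds for $D_0$ and $D_1$ — together with the base estimate $2k^2n$ for $T_{2k,2kn}$ from Theorem~\ref{thm1} — I would show both neighbours vanish above degree $2k(k+1)n$, forcing $\KH^i(D)=0$ for $i>2k(k+1)n$. The arithmetic to verify is that the per-crossing shifts accumulate over the final block of $2k$ crossings to exactly $2k(k+1)n=2k^2n+2kn$.

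For the dimension of the top term, the key point is that in homological degree $i=2k(k+1)n$ the connecting homomorphisms vanish: by the first part each of $D_0$ and $D_1$ has Khovanov homology concentrated in degrees $\le 2k(k+1)n$ after the relevant shift, so $\partial$, which raises homological degree by one, lands in a trivial group. Hence the long exact sequence degenerates into short exact sequences in top degree, and $\dim_{\mathbf{Q}}\KH^{2k(k+1)n}(D)$ equals the sum of the top-degree dimensions of $D_0$ and $D_1$. Iterating the recursion and summing the binomial contributions (arising ultimately from the $(\mathbf{Q}^2)^{\otimes m}$ state spaces of the fully smoothed diagrams), I expect the count to telescope to the central binomial coefficient $\binom{2k+2}{k+1}$.

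Finally, for the non-vanishing of the $k+2$ specified $q$-gradings I would track the $q$-grading through each short exact sequence of the previous step. The top homological generators come from an explicit collection of circles in the maximal smoothing, whose $q$-degrees form a symmetric range of the shape $6k(k+1)n+1-2i$; it then suffices to check that the $k+2$ largest of these values are populated by generators surviving to homology, which the degree bounds of the first two parts prevent any differential from killing. The main obstacle throughout is the precise determination of the shifts $a,b$ and the identification of $D_1$, followed by the verification that the connecting maps vanish in the maximal homological degree; once these are pinned down, all three conclusions follow from the same recursion by bookkeeping on, respectively, the homological degree, the total dimension, and the $q$-grading.
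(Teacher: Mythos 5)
Your overall skeleton (resolve the crossings of the braid one at a time, feed the smoothings into the skein long exact sequence, induct on $k$) is the same as the paper's, but there is a genuine gap at the heart of the dimension computation. You claim that in the top homological degree the long exact sequence degenerates into short exact sequences because ``$\partial$, which raises homological degree by one, lands in a trivial group.'' Only one of the two boundary maps adjacent to the top degree is killed this way: writing $i_{\max}=2k(k+1)n$, the relevant portion of the sequence is
\begin{equation*}
\KH^{i_{\max}-1-a}(D_1)\xrightarrow{\ \partial\ }\KH^{i_{\max}}(D_0)\to \KH^{i_{\max}}(D)\to \KH^{i_{\max}-a}(D_1)\to \KH^{i_{\max}+1}(D_0)=0,
\end{equation*}
so what you actually obtain is $\dim\KH^{i_{\max}}(D)=\dim\KH^{i_{\max}-a}(D_1)+\dim\operatorname{coker}\partial$, where the connecting map $\partial$ on the left acts one degree \emph{below} the maximum and is not controlled by any vanishing result. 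Degree bounds alone therefore yield only the upper bound $\dim\KH^{2k(k+1)n}(T_{2k+1,(2k+1)n})\le\binom{2k+2}{k+1}$. The paper closes this gap with an ingredient entirely absent from your proposal: the Lee spectral sequence (Theorem~\ref{spect}) combined with Lee's linking-number formula for $\dim\operatorname{Lee}^{i}$ (Proposition~\ref{4.3}), which gives the matching lower bound $\binom{2k+2}{k+1}\le\dim\KH^{2k(k+1)n}$ and forces every inequality in the telescoped chain to be an equality. The same input is what shows the top group is nonzero at all --- without it your first claim is only an upper bound on the maximal degree --- and the resulting forced equalities are precisely what the paper then uses to propagate the non-vanishing of the $q$-gradings $6k(k+1)n+1-2i$ by induction, after identifying the $1$-smoothed diagrams with $D_{2k-1,(2k-1)n}$ and $D_{2k-1,(2k-1)n}\sqcup\bigcirc$. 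Your alternative of exhibiting explicit generators in the maximal smoothing that survive to homology is not developed and implicitly rests on the same unjustified degeneration.

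Two further points. First, after one smoothing the diagrams that appear are partially smoothed braid closures (the $E^m_{p,q}$ and $D^m_{p,q}$ of the paper), not honest torus links, and the odd case recurses to $T_{2k-1,(2k-1)n}$ within the odd family rather than reducing to the even case of Theorem~\ref{thm1}; so the induction must be carried along this larger family of diagrams. Second, the vanishing statement, which you describe as arithmetic bookkeeping of shifts, is where the paper does most of its technical work: the bound $\KH^{i}=0$ for $i>2k(k+1)n$ requires controlling the number of negative crossings of every diagram $E^m$ appearing in the resolution tree (the appendix claims), and this count --- not the per-crossing shift over one block --- is what produces the coefficient $2k(k+1)$ rather than $2k^{2}$.
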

From Theorem~$\ref{newmainthm}$, we obtain the following. 
\begin{cor}\label{i_max1}
Let k and n be positive integers. 
Then we have 
\begin{center}
$\max \{i\in\mathbf{Z}|\KH^{i}(T_{2k+1, (2k+1)n})\neq 0\}=2k(k+1)n$. 
\end{center}
\end{cor}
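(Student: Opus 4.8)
The plan is to deduce the corollary directly from Theorem~\ref{newmainthm}, which already supplies both halves of the claimed equality. Writing $N=2k(k+1)n$ for brevity, I would show that $N$ serves simultaneously as an upper and a lower bound for $\max\{i\in\mathbf{Z}\mid \KH^{i}(T_{2k+1,(2k+1)n})\neq 0\}$, so that the two bounds pinch to equality.

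For the upper bound, the first assertion of Theorem~\ref{newmainthm} gives $\KH^{i}(T_{2k+1,(2k+1)n})=0$ for every $i>N$. Thus any index $i$ with $\KH^{i}\neq 0$ must satisfy $i\le N$, and the maximum (which exists, since the set is nonempty) is at most $N$. For the lower bound, I would invoke the dimension formula $\dim_{\mathbf{Q}}\KH^{N}(T_{2k+1,(2k+1)n})=\binom{2k+2}{k+1}$: because a binomial coefficient is a strictly positive integer, $\KH^{N}\neq 0$, and so $N$ itself lies in the set whose maximum we are computing. Combining the two bounds yields $\max\{i\mid \KH^{i}\neq 0\}=N$, as desired.

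Since every ingredient is handed to us by the theorem, there is essentially no obstacle here: the only point requiring (trivial) verification is the positivity of $\binom{2k+2}{k+1}$, which is precisely what guarantees that the maximum is attained exactly at $N$ rather than at some strictly smaller homological degree. One could equally well cite the last assertion of Theorem~\ref{newmainthm}, which exhibits explicit nonzero $q$-graded summands $\KH^{N,\,6k(k+1)n+1-2i}\neq 0$ in homological degree $N$, as an alternative witness to $\KH^{N}\neq 0$; either route closes the argument immediately.
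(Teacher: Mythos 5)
Your argument is correct and is exactly how the paper obtains Corollary~\ref{i_max1}: the vanishing statement of Theorem~\ref{newmainthm} gives the upper bound, and the positivity of $\binom{2k+2}{k+1}$ (equivalently, the explicit nonzero $q$-graded summands) shows the maximum is attained at $2k(k+1)n$. The paper states the corollary as an immediate consequence of the theorem without further proof, so there is nothing to add.
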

Moreover, we also obtain an estimation of the homological thickness of $T_{2k+1, (2k+1)n}$ (see Corollary~$\ref{cor_torus_thick}$).  
\par
Next we consider the $(p, pn)$-cabling $K(p, pn)$ of any oriented knot $K$. 
Assume that each component of $K(p, pn)$ has an orientation induced by $K$, that is, each component of $K(p, pn)$ is homologous to $K$ in the tubular neighborhood of $K$. 
For such a link, we obtain an analog of Theorem~$\ref{newmainthm}$. 
\begin{thm}\label{newmainthm2}
Let $K$ be an oriented knot and $D$ be a diagram of $K$with $l_{+}$ positive crossings and $l_{-}$ negative crossings. 
Put $l=l_{+}+l_{-}$ and $f=l_{+}-l_{-}$. 
Then for $n\geq l $ and any positive integer $k$, we obtain the following:  
\begin{align*}
\max\{i\in\mathbf{Z}|\KH^{i}(K(2k, 2k(n+f)))\neq 0\}=2k^{2}(n+f). 
\end{align*}
In addition, if $n>l$, we determine the dimension of the maximal Khovanov homology of the link:  
\begin{align*}
\dim_{\mathbf{Q}} \KH^{2k^{2}(n+f)}(K(2k, 2k(n+f)))=\begin{pmatrix}
2k\\
k
\end{pmatrix}.
\end{align*}
Moreover, 
for $n>l$ and $i=0, \dots, k$, we have 
\begin{align*}
\KH^{2k^{2}(n+f), 6k^{2}(n+f)-2i}(K(2k, 2k(n+f)))\neq 0. 
\end{align*}
\end{thm}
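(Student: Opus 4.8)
The plan is to reduce the cabling $K(2k,2k(n+f))$ to a torus-link computation via skein long exact sequences, exploiting the fact that the companion knot $K$ contributes only through its writhe and the number of crossings. The main idea is to start from a diagram of $K(2k,2k(n+f))$ obtained by inserting $2k$ parallel copies of $K$ (the $2k$-cabling of the diagram $D$) and then adding $n+f$ full twists along the cabling annulus; I would keep track of how resolving the crossings coming from $D$ interacts with the maximal-degree part governed by the torus-link result of Sto\v{s}i\'c.

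First I would set up the skein exact sequences resolving the crossings of the companion diagram $D$. Each crossing of $D$, once cabled, becomes a $2k\times 2k$ tangle; resolving it (in Khovanov's unoriented skein triangle) relates $K(2k,\cdot)$ to diagrams where that cabled crossing is replaced by either the identity braid or a full twist region. The key quantitative input is that a positive crossing shifts homological degree by a controlled amount and a negative crossing by another, so that over all $l=l_++l_-$ crossings the cumulative homological shift is exactly $f=l_+-l_-$ worth of full twists; this is precisely why the answer is phrased in terms of $n+f$ rather than $n$. Running these triangles, the ``extreme'' term is the one in which every cabled crossing of $D$ is resolved to the identity, which turns $K(2k,2k(n+f))$ into the torus link $T_{2k,2k(n+f)}$ up to the writhe-induced grading shift. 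I would then invoke Theorem~$\ref{thm1}$ and Theorem~$\ref{thm2}$ (Sto\v{s}i\'c's computation of the top homological degree $2k^2m$ and its dimension $\binom{2k}{k}$ for $T_{2k,2km}$) to identify the maximal degree as $2k^2(n+f)$ and the top-degree dimension as $\binom{2k}{k}$.

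The core of the argument is a vanishing/degree-bound lemma: I must show that all the \emph{other} terms appearing in the skein triangles live in strictly lower homological degree, so that they neither raise the maximal degree above $2k^2(n+f)$ nor contribute to (or cancel into) the top-degree group. For the maximal-degree equality this is a one-sided estimate — each resolution other than the all-identity one produces fewer positive crossings and hence a strictly smaller upper bound on homological degree, which is where the hypothesis $n\geq l$ enters: we need enough full twists so that the torus-link contribution genuinely dominates every correction term. For the dimension statement and the nonvanishing of $\KH^{2k^2(n+f),\,6k^2(n+f)-2i}$ for $i=0,\dots,k$, the stronger hypothesis $n>l$ is required to guarantee that no boundary map in the surviving exact sequences can hit the top degree, so that $\KH^{2k^2(n+f)}(K(2k,2k(n+f)))\cong \KH^{2k^2(n+f)}(T_{2k,2k(n+f)})$ as bigraded groups; the specific $q$-gradings $6k^2(n+f)-2i$ then follow by transporting Sto\v{s}i\'c's $q$-grading data through the overall grading shift induced by the $f$ worth of writhe.

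The hard part will be the bookkeeping of gradings and the vanishing lemma — specifically, proving rigorously that every non-extremal resolution of the cabled diagram $D$ yields a complex supported in homological degree strictly below $2k^2(n+f)$, and that the induced maps in the long exact sequences cannot create new classes in the top degree. Controlling this uniformly over all $2^{l}$ resolution choices, while tracking the interplay between the companion's crossing signs and the full-twist count, is the delicate step; the threshold $n>l$ (versus $n\geq l$) is exactly the margin needed to push all error terms out of the maximal degree and isolate the torus-link contribution.
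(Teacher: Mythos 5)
Your proposal has a genuine gap, and it is not the route the paper takes. The central difficulty is your first step: a single crossing of the companion diagram $D$, once cabled, becomes a tangle with $(2k)^{2}$ crossings, and Khovanov's skein exact triangle resolves \emph{one} crossing at a time. There is no exact triangle relating $K(2k,\cdot)$ to the two diagrams in which a whole cabled crossing is ``replaced by the identity braid or a full twist region.'' To carry out your reduction you would have to iterate the triangle over all $(2k)^{2}l$ crossings of the cabled companion (so $2^{(2k)^{2}l}$ resolutions, not $2^{l}$), and the intermediate partial smoothings of a cabled crossing are not cablings, not torus links, and not positive braids; your one-sided estimate ``fewer positive crossings hence smaller maximal degree'' gives only the trivial bound by the total crossing number for such diagrams, which far exceeds $2k^{2}(n+f)$. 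The second gap is the lower bound: even granting a reduction to $T_{2k,2k(n+f)}$ up to a shift, you must show the top-degree classes survive the iterated connecting maps, and you only assert that ``$n>l$ guarantees no boundary map can hit the top degree'' without a mechanism. The paper's mechanism is Lee homology: Proposition~$\ref{4.3}$ computes $\dim\operatorname{Lee}^{2k^{2}(n+f)}=\binom{2k}{k}$ from linking numbers, and the Lee spectral sequence (Theorem~$\ref{spect}$) forces $\dim\KH^{2k^{2}(n+f)}\geq\binom{2k}{k}$, which meets the upper bound from the exact sequences and makes every inequality an equality. Your sketch never invokes Lee homology, and without it the nonvanishing statements (including the specific $q$-gradings $6k^{2}(n+f)-2i$) are unsupported.

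For contrast, the paper never resolves the companion's crossings at all. Following Sto{\v s}i{\'c}, it smooths crossings only in the added twist braid $D_{2k,2kn}$: the $0$-smoothings $D^{m}(2k,\cdot)$ peel off twists one at a time down to $D(2k,2kf)$ (whose crossing number gives the vanishing range), while the $1$-smoothings $E^{m}(2k,\cdot)$ are shown, after an isotopy, to be cables of the \emph{same} knot $K$ on $2k-2$ strands, so the induction is on the number of strands with the companion fixed; $l_{\pm}$ enter only through the count of negative crossings that normalizes $H^{i}$ to $\KH^{i}$, which is exactly where the thresholds $n\geq l$ and $n>l$ come from (Claims~$\ref{key_claim}$ and $\ref{key_claim3}$). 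Sto{\v s}i{\'c}'s torus-link theorems are the model for the statement but are not actually cited in the proof of Theorem~$\ref{newmainthm2}$. If you want to pursue your reduction-to-torus-link idea, you would need a categorified cabling formula or a filtration argument on the cube of resolutions of the cabled companion, neither of which is supplied by the skein triangle alone.
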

Corollary~$\ref{i_max1}$ and the first claim of Theorem~$\ref{newmainthm2}$ imply a relation between the number of full twists and the maximal degree of the Khovanov homology.  

We also estimate the maximal homological degree of the Khovanov homology of the $(2k+1, (2k+1)n)$-cabling of any knot $K$. 
\begin{prop}\label{newmainprop}
Let $K$ be an oriented knot and $D$ be a diagram of $K$with $l_{+}$ positive crossings and $l_{-}$ negative crossings. 
Put $l=l_{+}+l_{-}$ and $f=l_{+}-l_{-}$. 
Then for $n\geq l $ and any positive integer $k$, we have the following:  
\begin{align*}
2k(k+1)(n+f)&\leq \max\{i\in\mathbf{Z}|\KH^{i}(K(2k+1, (2k+1)(n+f)))\neq 0\}\\
&\leq 2k(k+1)(n+f)+l_{+}. 
\end{align*}
\end{prop}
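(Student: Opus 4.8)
The plan is to adapt Sto{\v s}i{\'c}'s inductive method, exactly as in the proofs of Theorems~\ref{newmainthm} and \ref{newmainthm2}. First I would fix a concrete diagram $\tilde{D}$ of the link $K(2k+1,(2k+1)(n+f))$: take the blackboard $(2k+1)$-cable of $D$, which realizes the $(2k+1,(2k+1)f)$-cable, and insert $n$ positive full twists $(\sigma_{1}\cdots\sigma_{2k})^{2k+1}$ on the $2k+1$ parallel strands at one point. In $\tilde{D}$ each positive (resp.\ negative) crossing of $D$ becomes a block of $(2k+1)^{2}$ positive (resp.\ negative) crossings, and the inserted twists form a torus-braid region. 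The base case $l_{+}=l_{-}=f=0$ is the torus link $T_{2k+1,(2k+1)n}$, for which Theorem~\ref{newmainthm} gives the value $2k(k+1)n$ exactly.

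For the dependence on $n$ I would apply the unoriented skein long exact sequence to one positive full twist, as in the proof of Theorem~\ref{newmainthm}, which shows that inserting one more positive full twist raises the maximal nonzero homological degree by $2k(k+1)$ once sufficiently many twists are present; this is where the hypothesis $n\geq l$ enters, placing us in the stable range. It then remains to control the constant term by comparing $\tilde{D}$ with the torus link $T_{2k+1,(2k+1)(n+f)}$, which I would do by peeling off the crossing blocks coming from $D$ with the skein long exact sequence. The oriented smoothing of such a block replaces it by parallel strands and changes the writhe by the sign of the crossing, so these smoothings reconstruct the framing contribution $2k(k+1)f$; the remaining smoothing lies, after normalization by the number of negative crossings, in strictly higher homological degree for a positive crossing and in strictly lower degree for a negative crossing, hence can raise the top degree by at most one for each positive crossing and not at all for a negative crossing. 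Summing these contributions yields the upper bound $2k(k+1)(n+f)+l_{+}$.

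For the lower bound I would track a specific nonzero class rather than merely an inequality of maxima. Theorem~\ref{newmainthm} produces a nonzero element of $\KH^{2k(k+1)(n+f)}(T_{2k+1,(2k+1)(n+f)})$, and I would show that, because $n\geq l$, the connecting homomorphisms in the long exact sequences above always send the top group into strictly lower homological degree, so this class injects into $\KH(\tilde{D})$. This gives $\max\{i\mid\KH^{i}(K(2k+1,(2k+1)(n+f)))\neq 0\}\geq 2k(k+1)(n+f)$ and completes the two-sided estimate.

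The main obstacle is the homological-degree bookkeeping in these long exact sequences: one must verify that a positive crossing of $D$ raises the top degree by at most one even though it contributes a whole block of $(2k+1)^{2}$ crossings, and that $n\geq l$ is precisely what prevents the connecting maps from destroying the surviving torus-link class. This is also the reason the odd case yields only a range: unlike the even case treated in Theorem~\ref{newmainthm2}, where the relevant connecting maps can be shown to vanish and the top degree is pinned down exactly, for $p=2k+1$ these maps need not vanish, so the contribution of the positive crossings can be located only in the interval $[0,l_{+}]$.
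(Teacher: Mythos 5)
Your overall strategy for the upper bound (induct via the unoriented skein long exact sequence on a cable diagram built from $D$ plus a twist region) is the same as the paper's, but both halves of your argument have genuine gaps. For the upper bound, the paper does not ``peel off the crossing blocks coming from $D$'' and compare with the torus link; instead it resolves the crossings of the twist region one at a time, reducing $D(2k+1,(2k+1)(n+f))$ all the way down to $D(2k+1,(2k+1)f)$, whose homology vanishes above $l(2k+1)^{2}$ for trivial reasons, and the entire difficulty is absorbed into bounding the $1$-smoothing branches $E^{m}$ (Claims~$\ref{key_claim2}$ and the crossing counts $Y_{1},Y_{2}$): each $E^{m}$ is reoriented as a smaller cable $D^{s}(2k+1-2x,\cdot)\sqcup U_{\varepsilon}$ and one must count its negative crossings exactly, since for a positive crossing the relevant estimate is $\max_{\KH}(D)\leq\max\bigl(\max_{\KH}(D_{0}),\,\max_{\KH}(D_{1})+1+n_{-}(D_{1})-n_{-}(D)\bigr)$ and the shift $n_{-}(D_{1})-n_{-}(D)$ is large. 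Your claim that a block of $(2k+1)^{2}$ crossings ``can raise the top degree by at most one'' is the conclusion of this bookkeeping, not an input to it; the $+l_{+}$ in the final bound emerges only after the arithmetic $2k(k+1)(n-l)+l_{+}(2k+1)^{2}=2k(k+1)(n+f)+l_{+}$, and you give no mechanism that would produce it block by block.

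The lower bound is where your route diverges from the paper and does not work as stated. There is no map from $\KH(T_{2k+1,(2k+1)(n+f)})$ into $\KH(K(2k+1,(2k+1)(n+f)))$ along which a class could ``inject'': the skein sequences run from the cable diagram toward its resolutions, the iterated oriented resolution of the $D$-blocks is a cable of the Seifert circles of $D$ (not the torus link unless $D$ has one Seifert circle), and pulling a top-degree class back through each triangle requires vanishing of connecting maps into the $E^{m}$ branches --- precisely the vanishing you concede fails in the odd case. The paper avoids all of this: it computes $\dim\operatorname{Lee}^{2k(k+1)(n+f)}$ of the cable directly from the pairwise linking numbers $n+f$ via Proposition~$\ref{4.3}$, obtains $\binom{2k+2}{k+1}\neq 0$, and invokes the Lee spectral sequence (Theorem~$\ref{spect}$) to conclude $\KH^{2k(k+1)(n+f)}\neq 0$. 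You should replace your injection argument with this Lee-homology argument, and supply the crossing-count claims (the analogues of Claims~$\ref{key_claim}$--$\ref{key_claim2}$) for the upper bound.
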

\par
As an application, we can give a computation of the Khovanov homology of a twisted Whitehead double of any knot with sufficiently many twists (Proposition~$\ref{lem4}$), since a cable link is obtained from such a knot by smoothing at a crossing. 
Moreover we compute the Rasmussen invariant $s$ (\cite{rasmussen1}) of such a knot (Corollary~$\ref{ras_double}$). 
\par
The paper is organized as follows: 
In Section~$\ref{def}$, we recall the definition of Khovanov homology and our main tools. 
In Sections~$\ref{main}$ and $\ref{main2}$, we prove Theorems~$\ref{newmainthm}$ and $\ref{newmainthm2}$, and Proposition~$\ref{newmainprop}$. 
In Section~$\ref{apply}$, we present our results on Whitehead doubles. 
Section~$\ref{appendix}$ contains the proofs of several technical results. 
\section{Khovanov homology}\label{def}
\subsection{The definition of Khovanov homology}
In this subsection, we recall the definition of the (rational) Khovanov homology. 
Let $L$ be an oriented link. 
Take a diagram $D$ of $L$ and an ordering of the crossings of $D$. 
For each crossing of $D$, we define a {\it $0$-smoothing} and a {\it $1$-smoothing} as in Figure~$\ref{smoothing}$. 
A smoothing of $D$ is a diagram where each crossing of $D$ is changed by either $0$-smoothing or $1$-smoothing. 
\begin{figure}[!h]
\begin{center}
\includegraphics[scale=0.7]{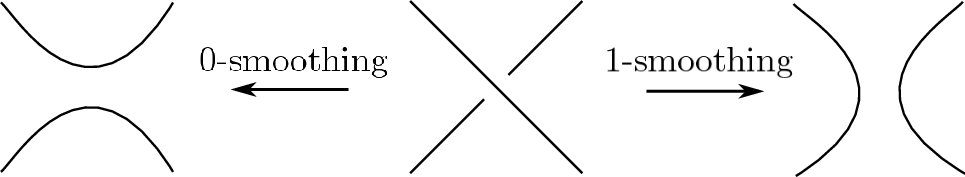}
\end{center}
\caption{0-smoothing and 1-smoothing. }
\label{smoothing}
\end{figure}
Let $n$ be the number of the crossings of $D$. Then $D$ has $2^{n}$ smoothings. 
By using the given ordering of the crossings of $D$, we have a natural bijection between the set of smoothings of $D$ and the set $\{0, 1\}^{n}$, where, to any $\varepsilon =(\varepsilon _{1}, \dots, \varepsilon _{n})\in \{0, 1\}^{n}$, we associate the smoothing $D_{\varepsilon }$ where the $i$-th crossing of $D$ is $\varepsilon_{i}$-smoothed. 
Each smoothing $D_{\varepsilon }$ is a collection of disjoint circles. 
\par
Let $V$ be a graded free $\mathbf{Q}$-module generated by $1$ and $X$ with $\operatorname{deg}(1)=1$ and $\operatorname{deg}(X)=-1$. 
Let $k_{\varepsilon }$ be the number of the circles of the smoothing $D_{\varepsilon }$. 
Put $M_{\varepsilon }=V^{\otimes k_{\varepsilon }}$. 
The module $M_{\varepsilon }$ has a graded module structure, that is, for $v=v_{1}\otimes\cdots\otimes v_{k_{\varepsilon }}\in M_{\varepsilon }$, $\deg(v):=\deg(v_{1})+\cdots+\deg(v_{k_{\varepsilon }})$. 
Then define 
\begin{align*}
C^{i}(D)&:=\bigoplus_{|\varepsilon |=i }M_{\varepsilon }\{i\},  
\end{align*}
where $|\varepsilon |=\sum_{i=1}^{m}\varepsilon _{i}$. 
Here, $M_{\varepsilon}\{i\}$ denotes $M_{\varepsilon}$ with its gradings shift by $i$ (for a graded module $M=\bigoplus_{j\in\mathbf{Z}}M^{j}$ and an integer $i$, we define the graded module $M\{i\}=\bigoplus_{j\in\mathbf{Z}}M\{i\}^{j}$ by $M\{i\}^{j}=M^{j-i}$). 
\par
The differential map $d^{i}\colon C^{i}(D)\rightarrow C^{i+1}(D)$ is defined as follows. 
Fix an ordering of the circles for each smoothing $D_{\varepsilon }$ and associate the $i$-th tensor factor of $M_{\varepsilon }$ to the $i$-th circle of $D_{\varepsilon }$.  
Take elements $\varepsilon$ and $\varepsilon ' \in \{0, 1\}^{n}$ such that $\varepsilon _{j}=0$ and $\varepsilon' _{j}=1$ for some $j$ and that $\varepsilon _{i}=\varepsilon' _{i}$ for any $i\neq j$. 
For such a pair $(\varepsilon , \varepsilon ')$, we will define a map $d_{\varepsilon \rightarrow \varepsilon '}\colon M_{\varepsilon }\rightarrow M_{\varepsilon '}$. 
\par
In the case where two circles of $D_{\varepsilon }$ merge into one circle of $D_{\varepsilon' }$,  the map $d_{\varepsilon \rightarrow \varepsilon '}$ is the identity on all factors except the tensor factors corresponding to the merged circles where it is a multiplication map $m\colon V\otimes V\rightarrow V$ given by: 
\begin{center}
$m(1\otimes 1)=1$,\  $m(1\otimes X)=m(X\otimes 1)=X$,\  $m(X\otimes X)=0$. 
\end{center}
\par
In the case where one circle of $D_{\varepsilon }$ splits into two circles of $D_{\varepsilon' }$,  the map $d_{\varepsilon \rightarrow \varepsilon '}$ is the identity on all factors except the tensor factor corresponding to the split circle where it is a comultiplication map $\Delta \colon V\rightarrow V\otimes V$ given by:
\begin{center}
$\Delta (1)=1\otimes X+X\otimes 1$,\  $\Delta (X)=X\otimes X$. 
\end{center}
\par
If there exist distinct integers $i$ and $j$ such that $\varepsilon _{i}\neq\varepsilon '_{i}$ and that $\varepsilon _{j}\neq\varepsilon '_{j}$, then define $d_{\varepsilon \rightarrow \varepsilon '}=0$. 
\par
In this setting, we define a map $d^{i}\colon C^{i}(D)\rightarrow C^{i+1}(D)$ by $\sum_{|\varepsilon|=i}d_{\varepsilon}^{i}$, where $d_{\varepsilon}^{i}\colon M_{\varepsilon}\rightarrow C^{i+1}(D)$ is defined by 
\begin{align*}
d^{i}(v):=\sum_{|\varepsilon'|=i+1 }(-1)^{l(\varepsilon, \varepsilon' )}d_{\varepsilon \rightarrow \varepsilon '}(v).   
\end{align*}
Here $v\in M_{\varepsilon }\subset C^{i}(D)$ and $l(\varepsilon, \varepsilon')$ is the number of $1$'s in front of (in our order) 
the factor of $\varepsilon$ which is different from $\varepsilon'$. 
\par
We can check that ($C^{i}(D)$, $d^{i}$) is a cochain complex and we denote its $i$-th homology group by $H^{i}(D)$. 
We call these the {\it unnormalized homology groups} of $D$. 
Since the map $d^{i}$ preserves the grading of $C^{i}(D)$, the group $H^{i}(D)$ has a graded structure $H^{i}(D)=\bigoplus_{j\in\mathbf{Z}}H^{i,j}(D)$ induced by that of $C^{i}(D)$. 
For any link diagram $D$, we define its Khovanov homology $\KH^{i, j}(D)$ by 
\begin{center}
$\KH^{i, j}(D)=H^{i+n_{-}, j-n_{+}+2n_{-}}(D)$, 
\end{center}
where $n_{+}$ and $n_{-}$ are the number of the positive and negative crossings of $D$, respectively. 
The grading $i$ is called the homological degree and $j$ is called the $q$-grading. 
\par
Let $D$ and $D'$ be link diagrams. The diagram $D$ is equivalent to $D'$ if $D'$ is obtained from $D$ by the Reidemeister moves (see Figure~$\ref{equivalent}$) and isotopies of the plane. 
It is known that two diagrams $D$ and $D'$ are diagrams of the same link if and only if $D$ is equivalent to $D'$. 
\begin{figure}[!h]
\begin{center}
\includegraphics[scale=0.4]{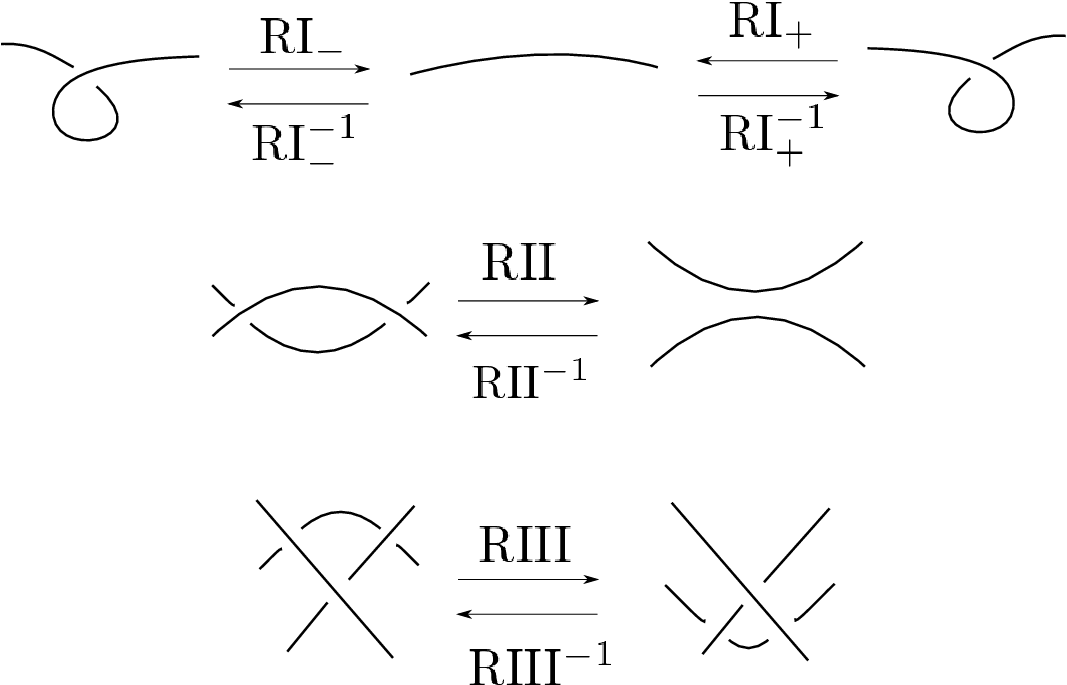}
\end{center}
\caption{Reidemeister moves. }
\label{equivalent}
\end{figure}
\begin{thm}[\cite{Bar-Natan-1}, \cite{khovanov1}]
Let $L$ be an oriented link and $D$ be a diagram of $L$. 
If $D'$ is equivalent to $D$, the homology groups $\KH(D)$ and $\KH(D')$ are isomorphic.  
In this sense, we can denote $\KH(D)$ by $\KH(L)$. 
Moreover, the graded Euler characteristic of the homology $\KH(L)$ equals the Jones polynomial of $L$, that is, 
\begin{align*}
V_{L}(t)=(q+q^{-1})^{-1}\sum_{i, j\in\mathbf{Z}}(-1)^{i}q^{j}\dim_{\mathbf{Q}}{\KH^{i, j}(L)}\Big|_{q=-t^{\frac{1}{2}}}, 
\end{align*}
where $V_{L}(t)$ is the Jones polynomial of $L$.  
\end{thm}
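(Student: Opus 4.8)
The plan is to prove the two assertions in turn: first the homotopy invariance of the unnormalized cochain complex $(C^{\bullet}(D), d^{\bullet})$ under the three Reidemeister moves, and then the identification of its graded Euler characteristic with the Jones polynomial. Throughout I would argue with the unnormalized complex $C^{\bullet}(D)$ and apply the homological and $q$-degree shifts by $n_{+}$ and $n_{-}$ only at the very end, since these shifts are tailored precisely to absorb the change in crossing data produced by the moves. Establishing that a single Reidemeister move induces a chain homotopy equivalence of complexes immediately gives the isomorphism $\KH(D)\cong\KH(D')$ whenever $D$ and $D'$ are equivalent, because homology is an invariant of the chain homotopy type.

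For invariance, the central tool is the elementary cancellation (``Gaussian elimination'') lemma for complexes of graded modules: if a single component $\phi$ of some differential is an isomorphism of graded modules, then the complex is chain homotopy equivalent to the one obtained by deleting the source and target of $\phi$ and subtracting from each remaining matrix entry the corresponding composite through $\phi^{-1}$. Combined with the splitting $V\cong\mathbf{Q}\{1\}\oplus\mathbf{Q}\{-1\}$ of the module attached to a free circle, this reduces each move to a finite local computation on the subcube spanned by the new crossings. For the first Reidemeister move the extra crossing contributes a two-term subcube in which one smoothing creates a disjoint circle; the (co)multiplication map built from the unit (resp.\ counit) of $V$ supplies the required isomorphism, and after cancellation the surviving complex is $C^{\bullet}$ of the kink-free diagram, up to exactly the grading shift that the normalization by $n_{\pm}$ restores. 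The second move is handled by the analogous cancellation on its $2\times 2$ subcube, and the third by first performing an R2-type simplification on each side and then matching the results.

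For the Euler characteristic, the computation is formal. By the definition $M_{\varepsilon}=V^{\otimes k_{\varepsilon}}$ together with the shift $\{i\}$ and the fact that the graded dimension of $V$ is $q+q^{-1}$, the graded Euler characteristic of $C^{\bullet}(D)$ is
\[
\sum_{i}(-1)^{i}\dim_{q}C^{i}(D)=\sum_{\varepsilon\in\{0,1\}^{n}}(-q)^{|\varepsilon|}(q+q^{-1})^{k_{\varepsilon}}.
\]
Because each differential $d^{i}$ preserves the $q$-grading, I may take this alternating sum one $q$-degree at a time and invoke the standard fact that the Euler characteristic of a complex equals that of its homology; hence the state sum above also equals $\sum_{i,j}(-1)^{i}q^{j}\dim_{\mathbf{Q}}H^{i,j}(D)$. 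One then recognizes the state sum as the Kauffman bracket of $D$ with normalization $\langle\bigcirc^{\sqcup k}\rangle=(q+q^{-1})^{k}$ and skein rule $\langle D\rangle=\langle D_{0}\rangle-q\langle D_{1}\rangle$. Finally, applying the shifts encoded in $\KH^{i,j}(D)=H^{i+n_{-},\,j-n_{+}+2n_{-}}(D)$ turns this bracket into the writhe-normalized invariant, and the substitution $q=-t^{1/2}$ together with the factor $(q+q^{-1})^{-1}$ yields precisely $V_{L}(t)$.

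The genuinely delicate part is the invariance under the third Reidemeister move: both diagrams carry a three-dimensional subcube, and matching them requires either an explicit chain homotopy or a carefully ordered sequence of cancellations, all while tracking the signs $(-1)^{l(\varepsilon,\varepsilon')}$ in the differential and the grading shifts. Essentially all of the work of the theorem lives in this bookkeeping; by contrast, once invariance is in hand the Euler characteristic statement is a one-line consequence of the state-sum identity above.
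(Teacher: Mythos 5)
The paper does not prove this theorem at all: it is quoted from Khovanov and Bar--Natan as background, so there is no in-paper argument to compare yours against. Your outline is the standard modern proof, and it is essentially correct as a strategy. The Euler-characteristic half is genuinely complete: the identity $\sum_{i}(-1)^{i}\dim_{q}C^{i}(D)=\sum_{\varepsilon}(-q)^{|\varepsilon|}(q+q^{-1})^{k_{\varepsilon}}$ follows directly from the definition of $C^{i}(D)=\bigoplus_{|\varepsilon|=i}M_{\varepsilon}\{i\}$, the passage from the complex to its homology is legitimate because $d$ preserves the $q$-grading and everything is finite dimensional, and the shift $\KH^{i,j}(D)=H^{i+n_{-},\,j-n_{+}+2n_{-}}(D)$ does convert the bracket into the writhe-normalized Jones polynomial under $q=-t^{1/2}$. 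For invariance, your delooping-plus-Gaussian-elimination scheme is Bar--Natan's later streamlined argument rather than the original explicit chain homotopies of Khovanov's paper and Bar--Natan's first paper; it buys a uniform mechanism (one cancellation lemma applied to local subcubes) in place of case-by-case homotopies, at the cost of having to verify that the cancellation lemma respects the signs $(-1)^{l(\varepsilon,\varepsilon')}$ and the internal gradings. The one place where your write-up is a sketch rather than a proof is exactly where you say it is: the R3 move. Naming the strategy (``perform an R2-type simplification on each side and match the results'') is not the same as exhibiting the isomorphism between the two reduced three-dimensional subcubes, and historically this matching is where all published proofs spend their effort. So the proposal is a faithful roadmap to a correct proof, but the R1/R2/R3 local computations --- especially R3 --- would need to be carried out explicitly before it could stand on its own.
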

%
%
%
%
%
\subsection{Main tools}
Our main tools are the following (Theorems~$\ref{viro}$ and $\ref{spect}$ and Proposition~$\ref{4.3}$). 
\subsubsection{A long exact sequence}
\par
Let $D$ be a link diagram and $D_{i}$ be a diagram obtained from $D$ by $i$-smoothing at a crossing of $D$ (see Figure~$\ref{smoothing2}$). 
The following exact sequence was introduced in \cite{viro1} (see also \cite{viro2}). 
\begin{figure}[!h]
\begin{center}
\includegraphics[scale=0.6]{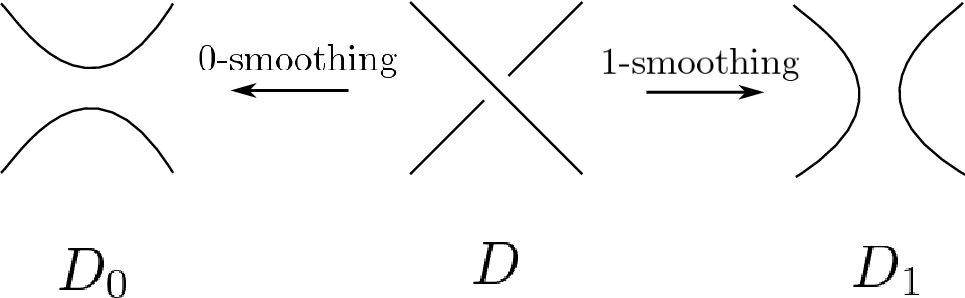}
\end{center}
\caption{$D$, $D_{0}$ and $D_{1}$. }
\label{smoothing2}
\end{figure}
\begin{thm}[\cite{viro1}]\label{viro}
There is a long exact sequence of the unnormalized homology groups: 
\begin{center}
$\cdots\to H^{i-1, j-1}(D_{1})\to H^{i, j}(D)\to H^{i, j}(D_{0})\to H^{i, j-1}(D_{1})\to\cdots$. 
\end{center}
\end{thm}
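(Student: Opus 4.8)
The plan is to realize $C^{*}(D)$ as an extension of the complex of one resolution by the (shifted) complex of the other, i.e.\ to produce a short exact sequence of bigraded cochain complexes built from $C^{*}(D_{0})$ and $C^{*}(D_{1})$, and then to feed this into the standard long exact sequence in homology while carefully tracking the grading shifts dictated by the conventions of Section~\ref{def}. Throughout I work directly with the unnormalized complexes $C^{i}(D)=\bigoplus_{|\varepsilon|=i}M_{\varepsilon}\{i\}$, since the statement concerns the unnormalized groups $H^{i,j}$.

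First I would fix the ordering of the crossings of $D$ so that the distinguished crossing at which we smooth is the last one, say the $n$-th. Then every smoothing $D_{\varepsilon}$ has $\varepsilon=(\varepsilon_{1},\dots,\varepsilon_{n-1},\varepsilon_{n})$, and deleting the last coordinate gives a bijection between the smoothings of $D$ with $\varepsilon_{n}=0$ (resp.\ $\varepsilon_{n}=1$) and all smoothings of $D_{0}$ (resp.\ $D_{1}$); moreover the circles of $D_{\varepsilon}$ literally coincide with those of the corresponding smoothing of $D_{0}$ or $D_{1}$, so the modules $M_{\varepsilon}$ agree as graded modules. Reading off gradings, the summands with $\varepsilon_{n}=0$ and $|\varepsilon|=i$ reproduce $C^{i}(D_{0})$ with no shift, whereas a summand with $\varepsilon_{n}=1$ and $|\varepsilon|=i$ comes from a smoothing of $D_{1}$ of weight $i-1$, and the shift $\{i\}$ appearing in $C^{i}(D)$ exceeds the shift $\{i-1\}$ used in $C^{i-1}(D_{1})$ by exactly one. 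This yields
\begin{align*}
C^{i}(D)=C^{i}(D_{0})\oplus C^{i-1}(D_{1})\{1\}
\end{align*}
as bigraded $\mathbf{Q}$-modules.

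Next I would verify that this decomposition is compatible with the differential. Since $d^{i}$ only turns $0$'s into $1$'s, from a summand with $\varepsilon_{n}=1$ the component of $d^{i}$ that would change the last crossing vanishes, so the $\varepsilon_{n}=1$ summands are closed under $d^{i}$ and form a subcomplex, while the $\varepsilon_{n}=0$ summands form the quotient. On the subcomplex the induced differential agrees with that of $C^{*-1}(D_{1})\{1\}$, and on the quotient with that of $C^{*}(D_{0})$: the sign $(-1)^{l(\varepsilon,\varepsilon')}$ attached to a change at a crossing $j<n$ counts only $1$'s in positions before $j$ and hence does not see the last coordinate, so no spurious sign is introduced. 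The part of $d^{i}$ sending the $\varepsilon_{n}=0$ block to the $\varepsilon_{n}=1$ block is the edge map ($m$ or $\Delta$) at the $n$-th crossing, which becomes the connecting homomorphism. Thus we obtain a short exact sequence of cochain complexes
\begin{align*}
0\to C^{*-1}(D_{1})\{1\}\to C^{*}(D)\to C^{*}(D_{0})\to 0.
\end{align*}

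Finally I would apply the long exact homology sequence $\cdots\to H^{i}(A)\to H^{i}(B)\to H^{i}(C)\to H^{i+1}(A)\to\cdots$ to this extension and translate indices. Using $H^{i,j}(C^{*-1}(D_{1})\{1\})=H^{i-1,j-1}(D_{1})$, the four consecutive terms become $H^{i-1,j-1}(D_{1})$, $H^{i,j}(D)$, $H^{i,j}(D_{0})$, and (since the connecting map raises homological degree by one) $H^{i,j-1}(D_{1})$, which is exactly the asserted sequence. The step requiring the most care is precisely this double bookkeeping: one must confirm that both the homological shift relating $C^{i-1}(D_{1})$ to its position in $C^{i}(D)$ and the single $q$-grading shift $\{1\}$ are exactly as stated, and that the sub- and quotient differentials carry no sign discrepancy relative to the standard differentials of $D_{0}$ and $D_{1}$. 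Placing the smoothed crossing last is what trivializes the sign comparison, making this the cleanest route to the result.
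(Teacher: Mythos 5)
Your proof is correct. The paper states this result with only a citation to \cite{viro1} and gives no proof of its own; your argument --- splitting the cube of resolutions along the last-ordered crossing to get the short exact sequence of complexes $0\to C^{*-1}(D_{1})\{1\}\to C^{*}(D)\to C^{*}(D_{0})\to 0$ and then taking the long exact homology sequence --- is the standard proof, and your bookkeeping of the homological shift, the $q$-grading shift $\{1\}$, and the signs (trivialized by putting the smoothed crossing last) all checks out against the conventions of Section~\ref{def}.
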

\par
\subsubsection{Lee homology}
Let $L$ be an oriented link. 
By $\operatorname{Lee}^{i}(L)$, we denote the homological degree $i$ term of the Lee homology of $L$ (for detail, see \cite{lee1}). 
\begin{thm}[\cite{lee1}]\label{spect}
There is a spectral sequence whose $E_{\infty}$-page is the Lee homology and $E_{2}$-page is the Khovanov homology.
\end{thm}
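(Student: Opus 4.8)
The plan is to reconstruct Lee's argument \cite{lee1}: realize the Lee homology as the homology of a filtered deformation of the Khovanov complex, and then invoke the spectral sequence attached to a filtered complex. First I would recall the deformation. Keeping the same cube of smoothings $\{D_{\varepsilon}\}$ and the same underlying module $\bigoplus_{\varepsilon}M_{\varepsilon}$, one replaces the Frobenius algebra $V=\mathbf{Q}[X]/(X^{2})$ by $V'=\mathbf{Q}[X]/(X^{2}-1)$; equivalently the structure maps acquire the extra terms
\[
m(X\otimes X)=1,\qquad \Delta(X)=X\otimes X+1\otimes 1,
\]
and the resulting differential $d_{\mathrm{Lee}}$ defines, by construction, the Lee homology $\operatorname{Lee}(L)$. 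The first thing to check is that $d_{\mathrm{Lee}}^{2}=0$; this follows from the commutativity, associativity and Frobenius compatibility of $V'$ exactly as for the Khovanov differential $d$, so no new phenomenon arises at this step.

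Next I would compare the two differentials. A direct $q$-degree count shows $d_{\mathrm{Lee}}=d+\Phi$, where $d$ is the Khovanov differential of bidegree $(1,0)$ and every term of $\Phi$ has bidegree $(1,4)$, i.e.\ strictly raises the $q$-grading. Consequently the decreasing filtration by $q$-grading,
\[
\mathcal{F}^{p}=\bigoplus_{j\geq p}C^{\bullet,j}(D),
\]
is preserved by $d_{\mathrm{Lee}}$, so that $(\bigoplus_{\varepsilon}M_{\varepsilon},\,d_{\mathrm{Lee}})$ becomes a filtered cochain complex.

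Finally I would feed this filtered complex into the standard spectral sequence machinery. Since the complex is finite-dimensional the filtration is bounded, so the sequence converges to the homology of the total complex, namely $\operatorname{Lee}(L)$, which is the $E_{\infty}$-page. On the associated graded the only surviving part of $d_{\mathrm{Lee}}$ is the $q$-preserving piece $d$, so after taking homology with respect to $d$ one recovers the Khovanov homology $\KH(L)$; with the indexing convention in which the Khovanov differential is the first nonzero differential of the sequence, this is the $E_{2}$-page, as stated. The only delicate points, and the main obstacle, are bookkeeping ones: verifying $d_{\mathrm{Lee}}^{2}=0$ for the deformed algebra and pinning down the exact bidegree $(1,4)$ of $\Phi$, so that the filtration is genuinely respected and the associated-graded differential is identified with $d$ on the nose. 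Once these are in place, convergence and the identification of the $E_{2}$- and $E_{\infty}$-pages are formal consequences of the filtered-complex formalism.
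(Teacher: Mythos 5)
The paper offers no proof of this statement---it is quoted directly from Lee's work---and your proposal faithfully reconstructs Lee's original argument: deform the Frobenius algebra to $\mathbf{Q}[X]/(X^{2}-1)$, observe that the extra terms of the differential strictly raise the $q$-grading, filter by $q$-grading, and take the spectral sequence of the (bounded) filtered complex. The degree bookkeeping (the perturbation has bidegree $(1,4)$, the associated-graded differential is the Khovanov differential) and the convergence argument are all correct, so this is the same approach as the cited source.
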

\begin{prop}[{\cite[Proposition~$4.3$]{lee1}}] \label{4.3}
Let $L$ be an oriented link with $n$ components, $S_{1}, \dots, S_{n}$. 
Then we have 
\begin{align*}
\dim_{\mathbf{Q}}(\operatorname{Lee}^{i}(L))=2\|\{E\subset \{2,\dots,n\}\mid\sum_{j\in E, k\notin E}2\operatorname{lk}(S_{j}, S_{k})=i\}\|, 
\end{align*}
where $\operatorname{lk}(S_{j}, S_{k})$ is the linking number of $S_{j}$ and $S_{k}$. 
\end{prop}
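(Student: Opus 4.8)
The plan is to follow Lee's original strategy: replace Khovanov's relation $X^{2}=0$ by Lee's relation $X^{2}=1$, together with the corresponding deformation of the (co)multiplication, and then exploit the fact that the resulting deformed Frobenius algebra $\mathbf{Q}[X]/(X^{2}-1)$ is diagonalizable over $\mathbf{Q}$. Concretely, I would pass to the basis $a=1+X$ and $b=1-X$ of $V$; in this basis $a$ and $b$ are, up to a factor of $2$, orthogonal idempotents ($a\cdot a=2a$, $b\cdot b=2b$, $a\cdot b=0$), so the deformed algebra splits as $\mathbf{Q}\oplus\mathbf{Q}$. Because the algebra splits as a product, the Lee complex of a diagram $D$ decomposes, and one can write down explicit cycles: to each resolution of $D$ consisting of disjoint circles one attaches a label $a$ or $b$ to every circle, and in these terms the differential becomes combinatorially transparent.

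The core of the dimension count is to produce, for every orientation $o$ of the underlying unoriented link, a canonical generator $\mathfrak{s}_{o}$ of $\operatorname{Lee}(L)$. First I would take the oriented resolution $D_{o}$ determined by $o$ (the oriented smoothing at every crossing); this is a disjoint union of circles carrying a checkerboard nesting structure. I would then label each circle by $a$ or $b$ according to the nesting parity determined by $o$, exactly as in Lee's construction, and verify that the resulting element is a cycle representing a nonzero class. The principal obstacle, and the substantive part of the argument, is to show that these $2^{n}$ classes are linearly independent and span, i.e. that $\{\mathfrak{s}_{o}\}_{o}$ is a basis of $\operatorname{Lee}(L)$; this is where the product decomposition of the algebra, together with an induction on the crossings (or a Reidemeister-invariance argument), is needed, and it is essentially the content of Lee's structural theorem.

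Granting that $\{\mathfrak{s}_{o}\}_{o}$ is a basis, it remains to pin down the homological degree of each $\mathfrak{s}_{o}$, which is where the linking numbers enter. Fix the given orientation $\mathfrak{o}$ as a reference and let $E\subseteq\{1,\dots,n\}$ be the set of components along which $o$ differs from $\mathfrak{o}$. Since $\mathfrak{s}_{o}$ is supported on $D_{o}$, I would compare $D_{o}$ with the reference resolution $D_{\mathfrak{o}}$, which sits in homological degree $0$ after the $n_{-}$ shift (positive crossings are $0$-smoothed, negative crossings $1$-smoothed). The two resolutions agree at every crossing internal to $E$ or to its complement and differ precisely at the crossings between $E$ and $E^{c}$, where reversing one strand flips the sign of the crossing and hence exchanges the $0$- and $1$-smoothing. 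Counting these flips shows that the homological degree of $\mathfrak{s}_{o}$ exceeds that of $\mathfrak{s}_{\mathfrak{o}}$ by exactly
\begin{align*}
\sum_{j\in E,\ k\notin E}2\operatorname{lk}(S_{j},S_{k})=2\operatorname{lk}(L_{E},L_{E^{c}}),
\end{align*}
where $L_{E}$ (resp. $L_{E^{c}}$) denotes the sublink on the components in $E$ (resp. not in $E$).

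Finally I would account for the factor of $2$ and the restriction $E\subseteq\{2,\dots,n\}$ in the statement. Reversing \emph{all} components simultaneously preserves every linking number and so does not change the homological degree; thus the orientations come in pairs $\{o,\bar{o}\}$ lying in the same degree, where $\bar{o}$ reverses $o$ globally. Normalizing by keeping the orientation of $S_{1}$ fixed indexes these pairs by subsets $E\subseteq\{2,\dots,n\}$, each contributing two generators in homological degree $2\operatorname{lk}(L_{E},L_{E^{c}})$. Collecting the contributions degree by degree then yields the asserted formula for $\dim_{\mathbf{Q}}(\operatorname{Lee}^{i}(L))$.
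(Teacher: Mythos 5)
This proposition is quoted in the paper directly from Lee's work (\cite[Proposition~4.3]{lee1}) with no proof given, so there is nothing internal to compare against; your sketch correctly reproduces Lee's original argument (diagonalizing $\mathbf{Q}[X]/(X^{2}-1)$ via $a=1\pm X$, the generators $\mathfrak{s}_{o}$ indexed by orientations, the degree shift $2\operatorname{lk}(L_{E},L_{E^{c}})$ computed from the signed count of flipped smoothings, and the pairing $\{o,\bar{o}\}$ giving the factor of $2$ and the indexing by $E\subset\{2,\dots,n\}$). The only step you defer rather than prove — that the $\mathfrak{s}_{o}$ form a basis — is precisely the content of Lee's structural theorem, so the proposal is consistent with the cited source.
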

\section{The maximal degree of the Khovanov homology of the $(2k+1, (2k+1)n)$-torus link}\label{main}
In this section, we prove Theorem~$\ref{newmainthm}$ which has three claims. 
The first, second and third claims are Lemmas~$\ref{mainthm}$, $\ref{mainthm2}$ and $\ref{non-trivial1}$ below, respectively. 
We first introduce some results by Sto{\v s}i{\'c}. 
\par
\begin{defn}
We denote the $(p, q)$-torus link by $T_{p,q}$. 
Put $D_{p, q}=(\sigma _{1}\cdots\sigma _{p-1})^{q}$, where the $\sigma _{i}$ are the standard generators of the braid group $B_{p}$. 
The closure of the braid $D_{p, q}$ is a diagram of the $(p, q)$-torus link $T_{p,q}$. 
We give $T_{p,q}$ the downward orientation so that all crossings of $D_{p, q}$ are positive. 
\end{defn}
Sto{\v s}i{\'c} \cite{stosic2} showed the following results (Theorems~$\ref{thm1}$ and $\ref{thm2}$ and Corollaries~$\ref{i_max}$ and $\ref{torus_thick}$). 
\begin{thm}[{\cite[Theorem~$1$]{stosic2}}]\label{thm1}
Let $k$ and $n$ be positive integers. Then we have $\KH^{i}(T_{2k, 2kn})=0$ if $i>2k^{2}n$. 
\end{thm}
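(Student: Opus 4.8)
The plan is to pass to the unnormalized homology groups $H^{i,j}$ of the all-positive braid diagram $D_{2k,2kn}=(\sigma_1\cdots\sigma_{2k-1})^{2kn}$. Since this diagram has $n_-=0$, the normalization shifts only the $q$-grading and leaves the homological grading untouched, so $\KH^{i}(T_{2k,2kn})=0$ is equivalent to $H^{i}(D_{2k,2kn})=0$. It therefore suffices to bound the maximal nonvanishing homological degree of $H(D_{2k,2kn})$ by $2k^{2}n$. The only engine I would use is the long exact sequence of Theorem~\ref{viro}; reading it around its middle term,
\[
H^{i-1,j-1}(D_{1})\to H^{i,j}(D)\to H^{i,j}(D_{0}),
\]
gives the vanishing criterion that $H^{i,j}(D)=0$ whenever both $H^{i-1,j-1}(D_{1})=0$ and $H^{i,j}(D_{0})=0$, where $D_{0}$ and $D_{1}$ are the $0$- and $1$-smoothings of a chosen crossing of $D$. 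Summing over $j$, if $a_{0}$ and $a_{1}$ denote the maximal nonvanishing homological degrees of $D_{0}$ and $D_{1}$, then that of $D$ is at most $\max\{a_{0},\,a_{1}+1\}$.

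I would then prove, by induction, a vanishing bound for the whole family of torus-braid diagrams $D_{p,q}=(\sigma_1\cdots\sigma_{p-1})^{q}$, from which the case $(p,q)=(2k,2kn)$ specializes. Fixing the topmost crossing $\sigma_{p-1}$ and applying the criterion above, the $0$-smoothing simply deletes this crossing, giving the partial braid $(\sigma_1\cdots\sigma_{p-1})^{q-1}(\sigma_1\cdots\sigma_{p-2})$, whose remaining top-row crossings I would continue to resolve until reaching $D_{p,q-1}$, with one full row removed. The $1$-smoothing inserts a cap--cup at strands $p-1,p$; a cascade of Reidemeister II moves slides it down through the braid and absorbs the last strand, identifying $D_{1}$, up to Reidemeister moves, with a diagram of a torus braid on $p-1$ strands. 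Feeding the inductive bounds for these smaller diagrams into the recursion $\max\deg(D)\le\max\{a_{0},a_{1}+1\}$ yields a bound $m(p,q)$; the real content is to choose the general form of $m(p,q)$ so that the recursion closes and evaluates to $m(2k,2kn)=2k^{2}n$. The base cases are $q=0$, an unlink concentrated in degree $0$, and $p=2$, the $(2,q)$-torus link whose Khovanov homology is known explicitly.

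The hard part will be the two pieces of bookkeeping that make the recursion sharp rather than merely finite. First, one must correctly identify the $1$-smoothing $D_{1}$ after the Reidemeister simplification, keeping track of any split unknotted components and of the curls produced by Reidemeister I moves, each of which shifts the homological and $q$-gradings of the \emph{unnormalized} homology and so contributes to the $a_{1}+1$ branch. Second, one must propagate these shifts through the nested resolution of an entire top row so that the accumulated degree increments from the $a_{1}+1$ branches add up to exactly $2k^{2}$ per full twist; any constant larger than this would fail to establish the sharp bound $2k^{2}n$. Choosing the right auxiliary family and the right order in which to resolve the $2k(2k-1)$ crossings of the top full twist, so that the inductions on $p$ and on $q$ interlock cleanly, is where essentially all of the work lies; the long exact sequence and the degree estimate themselves are routine once that combinatorial organization is fixed.
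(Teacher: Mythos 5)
Your overall strategy---pass to the unnormalized homology of the all-positive braid diagram, resolve the crossings of the top row one at a time via the long exact sequence of Theorem~\ref{viro}, bound the top nonvanishing degree by $\max\{a_{0},a_{1}+1\}$, and run a double induction on the number of strands and the number of rows---is exactly the strategy of Sto{\v s}i{\'c}'s proof, and it is also the strategy this paper uses to generalize the statement (Lemma~\ref{lem_cable}~$(1)$ recovers Theorem~\ref{thm1} by taking $K$ to be the unknot, so $l=f=0$). So the approach is the right one. But as written the proposal is a roadmap rather than a proof: everything that makes the bound come out to exactly $2k^{2}n$ is deferred, and two of the deferred items are stated incorrectly.

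First, the simplified $1$-smoothing is a torus-braid diagram on $p-2$ strands, not $p-1$ (possibly with a split unknotted circle, as for $E^{1}_{p,q}$). This is not a cosmetic slip: the drop by \emph{two} strands is what makes the induction for $T_{2k,2kn}$ close on the even family $T_{2k-2,(2k-2)n}$, and it is the reason the odd-strand case requires the separate treatment of Theorem~\ref{newmainthm}. With a drop of one strand your recursion would mix the even and odd families and would not evaluate to $2k^{2}n$. Second, the degree increment in the $a_{1}+1$ branch does not come from Reidemeister~I curls (none are needed here); it comes from the fact that, with the orientation pulled back from the simplified positive diagram, the diagram $E^{m}_{2k,2kn}$ acquires $n_{-}=(4k-2)n-1$ negative crossings, so that $H^{i}(E^{m}_{2k,2kn})\cong \KH^{\,i-n_{-}}$ of a $(2k-2)$-strand torus braid diagram. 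Feeding in the inductive bound $2(k-1)^{2}n$ for that smaller diagram gives $a_{1}+1\leq 2(k-1)^{2}n+n_{-}+1=2k^{2}n$, which is the sharp constant. Until that crossing count is actually carried out for every $E^{m}$ appearing in the cascade (this is the analogue of Claims~\ref{key_claim} and~\ref{key_claim2} in Section~\ref{appendix}, and it is where all the work is), the bound $2k^{2}n$ is not established.
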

\begin{thm}[{\cite[Theorem~$3$]{stosic2}}]\label{thm2}
Let $k$ and $n$ be positive integers. Then we have 
\begin{align*}
\dim_{\mathbf{Q}} \KH^{2k^{2}n}(T_{2k, 2kn})=
\begin{pmatrix}
2k\\
k
\end{pmatrix}. 
\end{align*}
Moreover, we obtain 
\begin{align*}
\dim_{\mathbf{Q}}\KH^{2k^{2}n, 6k^{2}n-2i}(T_{2k, 2kn})=
\begin{cases}
\begin{pmatrix}
2k\\
k-i
\end{pmatrix}
-
\begin{pmatrix}
2k\\
k-i-1
\end{pmatrix}& \text{if\ }i=0, \dots, k, \\
\ &\ \\
\ \ \ 0 &\text{otherwise}.
\end{cases}
\end{align*}
\end{thm}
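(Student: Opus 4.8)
The plan is to trap $\dim_{\mathbf{Q}}\KH^{2k^{2}n}(T_{2k,2kn})$ between a lower bound supplied by Lee homology and a matching upper bound supplied by an inductive skein argument, and then to refine the count one $q$-grading at a time.

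For the lower bound I would first observe that the $\gcd(2k,2kn)=2k$ components of $T_{2k,2kn}$ are mutually parallel $(1,n)$-curves on the torus, so every pairwise linking number equals $n$. Feeding this into Proposition~\ref{4.3}, the Lee homology is supported in homological degrees $2ns(2k-s)$ for $s=0,\dots,2k-1$, with $\dim_{\mathbf{Q}}\operatorname{Lee}^{2ns(2k-s)}=2\binom{2k-1}{s}$ contributed by the subsets of $\{2,\dots,2k\}$ of size $s$. The value $s(2k-s)$ is maximized at $s=k$, giving top degree $2k^{2}n$ with $\dim_{\mathbf{Q}}\operatorname{Lee}^{2k^{2}n}=2\binom{2k-1}{k}=\binom{2k}{k}$. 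Since $2k^{2}n$ is the maximal homological degree by Theorem~\ref{thm1}, every differential of the spectral sequence of Theorem~\ref{spect} leaving this degree lands in the zero group $\KH^{2k^{2}n+1}$, so $E_{\infty}$ in degree $2k^{2}n$ is a quotient of $E_{2}=\KH^{2k^{2}n}$. This yields $\dim_{\mathbf{Q}}\KH^{2k^{2}n}(T_{2k,2kn})\geq\binom{2k}{k}$.

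For the matching upper bound I would induct using the long exact sequence of Theorem~\ref{viro}. Resolving a single crossing of the braid $(\sigma_{1}\cdots\sigma_{2k-1})^{2kn}$ relates $T_{2k,2kn}$ to torus links with fewer crossings, namely a diagram with fewer twists (the $0$-resolution) and, after Reidemeister~I simplification, a link on fewer strands (the $1$-resolution). Near the top homological degree the vanishing statement of Theorem~\ref{thm1}, applied to the two auxiliary diagrams, forces all but one or two terms of the exact sequence to vanish; the surviving portion then identifies $\KH^{2k^{2}n}(T_{2k,2kn})$ with the top homology of the simpler links, up to the expected degree shift of $(2k^{2},6k^{2})$ per full twist, so that the top $q$-grading sits at $6k^{2}n$. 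Iterating down to a base case ($n=1$ together with an inner induction on $k$ and a small explicit computation) and summing contributions gives $\dim_{\mathbf{Q}}\KH^{2k^{2}n}\leq\binom{2k}{k}$, which combined with the lower bound forces equality.

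Finally, the graded refinement comes from running the same induction while tracking $q$-gradings: each Reidemeister~I move and each $\{1\}$-shift in Theorem~\ref{viro} contributes a controlled shift, concentrating the top homology in $q$-gradings $6k^{2}n-2i$ for $i=0,\dots,k$, and the resulting dimensions satisfy the recursion solved by the ballot numbers $\binom{2k}{k-i}-\binom{2k}{k-i-1}$. These are exactly the multiplicities of the irreducible summands of $V^{\otimes 2k}$, and their telescoping sum recovers the total $\binom{2k}{k}$, providing a consistency check. I expect the main obstacle to be this upper-bound induction: one must verify that the connecting homomorphisms in the exact sequence genuinely vanish or are isomorphisms in the top degrees, and keep precise account of the grading shifts, since any surviving differential would corrupt both the total dimension and its $q$-graded distribution.
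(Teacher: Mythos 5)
Note first that the paper does not prove Theorem~\ref{thm2} at all: it is quoted from Sto{\v s}i{\'c} \cite{stosic2} and used as a black box. Your strategy is nevertheless exactly the one that Sto{\v s}i{\'c}'s argument and this paper's proofs of the analogous odd-strand statements (Lemmas~\ref{mainthm2} and~\ref{non-trivial1}) follow: a Lee-homology lower bound via Proposition~\ref{4.3} and the spectral sequence of Theorem~\ref{spect}, matched by an upper bound from the skein long exact sequence of Theorem~\ref{viro}. Your lower-bound half is complete and correct: all pairwise linking numbers equal $n$, the Lee degree $2ns(2k-s)$ attains its maximum $2k^{2}n$ only at $s=k$, $2\binom{2k-1}{k}=\binom{2k}{k}$, and since $\KH^{2k^{2}n+1}=0$ by Theorem~\ref{thm1} no differential leaves the top degree, so $\dim_{\mathbf{Q}}\KH^{2k^{2}n}(T_{2k,2kn})\geq\binom{2k}{k}$.

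The upper bound and the graded refinement are where the real work lies, and your sketch defers both. Three concrete ingredients are missing. (i) The exact sequences only collapse to $\dim H^{2k^{2}n}(D_{2k,2kn})\leq\sum_{m}\dim H^{2k^{2}n-1}(E^{m}_{2k,2kn})$ once one knows $H^{2k^{2}n}(D_{2k,2kn-1})=0$; this does \emph{not} follow from Theorem~\ref{thm1}, because $T_{2k,2kn-1}$ is a knot, not a $(2k,2km)$-torus link, and the corresponding statement in the odd case (Lemma~\ref{lem1}) costs the paper a separate appendix argument. (ii) One must identify each $1$-resolution $E^{m}$ with a partially resolved $(2k-2)$-strand diagram (disjoint union with a circle when $m=1$), count its negative crossings exactly, and run the induction over \emph{all} the intermediate diagrams $D^{l}_{2k-2,\cdot}$, not just the closed torus link, to obtain the telescoping sum $\sum_{m}2\binom{2k-1-m}{k-1}=\binom{2k}{k}$. (iii) For the second display, asserting that the graded dimensions ``satisfy the recursion solved by the ballot numbers'' is not a proof: the recursion one actually obtains distributes the top homology among the $E^{m}$ according to their individual $q$-graded top homologies, and establishing that this yields $\binom{2k}{k-i}-\binom{2k}{k-i-1}$ requires both the equalities $\dim\im g^{m}_{j}=\dim H^{2k^{2}n-1,j-1}(E^{m+1})$ in every $q$-degree (which do follow from the total-dimension equality, as in the paper's equations (\ref{g})--(\ref{f})) and an explicit graded inductive hypothesis that your proposal never states. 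That this step is genuinely delicate is illustrated by the paper itself, which in the odd-strand case is only able to prove non-vanishing (Lemma~\ref{non-trivial1}) rather than exact graded dimensions. So the plan is the right one, but the central verifications are still to be done.
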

\par
\ 
\par
From the above results, we can determine the maximal homological degree of the Khovanov homology of the $(2k, 2kn)$-torus link. 
\begin{cor}[\cite{stosic2}]\label{i_max}
Let k and n be positive integers. Then we obtain $\max \{i\in\mathbf{Z}|\KH^{i}(T_{2k, 2kn})\neq 0\}=2k^{2}n$. 
\end{cor}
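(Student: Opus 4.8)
The plan is to read the corollary off directly from the two theorems of Sto{\v s}i{\'c} stated just above, by combining an upper bound with a matching lower bound on the top nonvanishing homological degree. First I would apply Theorem~\ref{thm1}, which asserts that $\KH^{i}(T_{2k,2kn})=0$ for every $i>2k^{2}n$. This immediately forces
\[
\max\{i\in\mathbf{Z}\mid\KH^{i}(T_{2k,2kn})\neq 0\}\leq 2k^{2}n,
\]
since no homological degree strictly above $2k^{2}n$ can support nonzero homology.

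Next I would appeal to Theorem~\ref{thm2}, which computes $\dim_{\mathbf{Q}}\KH^{2k^{2}n}(T_{2k,2kn})=\binom{2k}{k}$. For every positive integer $k$ this dimension is strictly positive, so $\KH^{2k^{2}n}(T_{2k,2kn})\neq 0$. Thus the homological degree $2k^{2}n$ really does carry nontrivial homology, which gives the reverse inequality
\[
\max\{i\in\mathbf{Z}\mid\KH^{i}(T_{2k,2kn})\neq 0\}\geq 2k^{2}n.
\]
Putting the two inequalities together yields the desired equality.

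There is essentially no obstacle to overcome here: all of the substantive work is contained in Theorems~\ref{thm1} and \ref{thm2}, and the corollary is merely the observation that the vanishing range of the former together with the explicit nonvanishing in top degree of the latter pin down the maximal nontrivial homological degree exactly. The only thing left to verify is the trivial numerical fact that $\binom{2k}{k}\neq 0$, which holds for every $k\geq 1$.
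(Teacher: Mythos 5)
Your proof is correct and is exactly the argument the paper intends: the upper bound comes from the vanishing statement of Theorem~\ref{thm1}, and the lower bound from the nonvanishing dimension $\binom{2k}{k}$ in Theorem~\ref{thm2}, which is precisely why the paper introduces the corollary with ``From the above results.'' No issues.
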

Moreover we can estimate the homological thickness of the $(2k, 2kn)$-torus link. 
\begin{cor}[{\cite[Corollary~$5$]{stosic2}}]\label{torus_thick}
The homological thickness $\operatorname{hw}(T_{2k, 2kn})$ of the $(2k$, $2kn)$-torus link is greater than or equal to $k(k-1)n+2$, where the homological thickness $\operatorname{hw}(L)$ of a link $L$ is defined as $(\max \{j-2i|KH^{i, j}(L)\neq 0\}-\min \{j-2i|KH^{i, j}(L)\neq 0\})/2+1$. 
\end{cor}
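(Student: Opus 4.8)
The plan is to bound $\operatorname{hw}$ from below by producing nonzero Khovanov homology groups on two $\delta$-diagonals, where $\delta=j-2i$, that are as far apart as possible, and then substituting their $\delta$-values into the defining formula $\operatorname{hw}(L)=(\max\{\delta\mid\KH^{i,j}(L)\neq0\}-\min\{\delta\mid\KH^{i,j}(L)\neq0\})/2+1$. So the whole argument reduces to locating one diagonal near the top homological degree and one far away near homological degree $0$.

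For the top end I would invoke Theorem~\ref{thm2}. It gives, in homological degree $i=2k^{2}n$, nonzero groups $\KH^{2k^{2}n,\,6k^{2}n-2i}$ for every $i=0,\dots,k$, since each difference $\binom{2k}{k-i}-\binom{2k}{k-i-1}$ is positive. In diagonal coordinates these occupy $\delta=2k^{2}n-2i$ for $i=0,\dots,k$, so the top homological degree alone already fills the band $2k^{2}n-2k\le\delta\le2k^{2}n$, contributing the candidate values $\delta=2k^{2}n$ at its upper edge and $\delta=2k^{2}n-2k$ at its lower edge.

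For the far-away diagonal I would use positivity. Since $T_{2k,2kn}$ is the closure of the positive braid $(\sigma_{1}\cdots\sigma_{2k-1})^{2kn}$, its diagram $D$ has $n_{-}=0$ and $n_{+}=2kn(2k-1)$, and the oriented (all-$0$) smoothing is a disjoint union of $2k$ circles, so $C^{0}(D)=V^{\otimes 2k}$. A direct check shows $H^{0}(D)=\ker d^{0}=\langle X^{\otimes 2k}\rangle$: every generator $\sigma_{i}$ occurs, so each adjacent pair of circles is merged by some term of $d^{0}$, and $m(X\otimes X)=0$ forces all tensor factors to be $X$. Feeding this through the normalization $\KH^{i,j}(D)=H^{i,\,j-n_{+}}(D)$ yields a nonzero group in degree $(0,\,n_{+}-2k)$, i.e.\ on the diagonal $\delta=n_{+}-2k=2kn(2k-1)-2k$, which for $k\ge2$ sits at or above the entire top band.

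Combining these via the thickness formula is then routine arithmetic: the gap between $\delta=n_{+}-2k$ and $\delta=2k^{2}n-2k$ equals $2k(k-1)n$, giving the bound $\operatorname{hw}\ge k(k-1)n+1$ immediately (and for $k=1$ the maximal diagonal already comes from the top class, so the stated bound $k(k-1)n+2=2$ is attained outright). The main obstacle is the \emph{sharp} final unit for $k\ge2$: one must show the support reaches one further diagonal, namely a nonzero class at $\delta=2k^{2}n-2k-2$, which by Theorem~\ref{thm2} cannot live in the top homological degree and so must be found just below it. I expect to capture this last diagonal through Sto\v{s}i\'c's inductive machinery—running the long exact sequence of Theorem~\ref{viro} on a single crossing to relate $T_{2k,2kn}$ to a smaller torus link and propagate a class in degree $2k^{2}n-1$—or, failing that, by combining the linking-number computation of Proposition~\ref{4.3} with the Lee spectral sequence of Theorem~\ref{spect} to certify a surviving generator there. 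That diagonal-extension step is the crux; everything else is bookkeeping of the grading shifts.
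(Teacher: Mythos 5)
Your overall strategy (one nonzero diagonal near homological degree $2k^{2}n$ from Theorem~\ref{thm2}, one near homological degree $0$ from positivity, then the arithmetic of $\delta=j-2i$) is exactly the template the paper uses for the analogous odd-index statement, Corollary~\ref{cor_torus_thick}. But there is a concrete error at the degree-$0$ end, and it is precisely what leaves you one unit short. Your ``direct check'' that $H^{0}(D)=\ker d^{0}=\langle X^{\otimes 2k}\rangle$ is false: the kernel of the merge map on each adjacent pair of Seifert circles also contains $1\otimes X-X\otimes 1$, and globally $\ker d^{0}$ contains the alternating element $\sum_{i=1}^{2k}(-1)^{i-1}X^{\otimes(i-1)}\otimes 1\otimes X^{\otimes(2k-i)}$ of internal degree $-2k+2$. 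So $H^{0}(D)$ is two-dimensional, in agreement with Theorem~\ref{khovanov2}, which gives $\KH^{0,j}(T_{2k,2kn})\neq 0$ for \emph{both} $j=c-s_{0}(D)$ and $j=c-s_{0}(D)+2$ with $c=2kn(2k-1)$, $s_{0}(D)=2k$. Using the upper of these two classes puts a nonzero group on the diagonal $\delta=2kn(2k-1)-2k+2$, and then
\begin{align*}
\operatorname{hw}(T_{2k,2kn})\geq \tfrac{1}{2}\bigl((2kn(2k-1)-2k+2)-(2k^{2}n-2k)\bigr)+1=k(k-1)n+2
\end{align*}
with no further input. This is exactly how the paper argues in the odd case, combining Lemma~\ref{non-trivial1} with Theorem~\ref{khovanov2}.

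The genuine gap in your write-up is the ``crux'' you defer to the end: you propose to recover the missing unit by exhibiting a class on the diagonal $\delta=2k^{2}n-2k-2$ in homological degree just below $2k^{2}n$. Nothing in Theorem~\ref{thm2}, the long exact sequence of Theorem~\ref{viro}, or the Lee spectral sequence (Proposition~\ref{4.3} puts all Lee classes of $T_{2k,2kn}$ on the diagonals $\delta=2k^{2}n-2i\pm 1-\cdots$ hitting only the top band) is known to produce such a class, and the paper never establishes one; as written, your proof is incomplete at that step. The fix is not to push the minimum diagonal down but to correct the maximum diagonal up by $2$, as above.
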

The homological thickness of a link estimates a distance between the link and an alternating link as follows. 
A link is {\it $k$-almost alternating} if it has a reduced diagram which can be alternating after $k$ crossing changes and no diagram which can be alternating after $k-1$ or less crossing changes (see \cite{almost_alternating}).  
Then we have the following results. 
\begin{thm}[{\cite[Theorem~$8$]{spanning2_kh}}]\label{dalt_hw}
Let $L$ be a $k$-almost alternating link. Then we obtain 
\begin{center}
$k\geq \operatorname{hw}(L)-2. $
\end{center}
\end{thm}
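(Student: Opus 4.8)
The plan is to prove the equivalent inequality $\operatorname{hw}(L)\le k+2$, in the slightly stronger form that any link admitting a connected diagram which becomes alternating after changing $m$ of its crossings satisfies $\operatorname{hw}(L)\le m+2$, and to argue by induction on $m$. The base case $m=0$ is Lee's theorem that a non-split alternating link is homologically thin, giving $\operatorname{hw}(L)=2$ (see \cite{lee1}). Before the induction I would pass to the orientation-independent form of the invariant: reversing the orientation of a sublink shifts every nonzero group by one common amount in the grading $\delta:=j-2i$, so $\operatorname{hw}$ is unchanged, and in terms of the unnormalized groups $H^{i,j}$ of Section~\ref{def} one checks $\delta_{\KH}=(j-2i)+n_{+}$. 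Hence $\operatorname{hw}(L)$ equals the $\delta$-width of the support of $H^{\ast,\ast}(D)$, and the whole argument can be run with the unoriented cube complex and Theorem~\ref{viro}.

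For the inductive step I would take a connected diagram $D$ with $m$ marked crossings whose change yields an alternating diagram $D_{\mathrm{alt}}$, pick one marked crossing $c$, and let $D_{0},D_{1}$ be its two smoothings. Since smoothing ignores over/under data, $D_{0}$ and $D_{1}$ are simultaneously smoothings of $D_{\mathrm{alt}}$; under the Tait-graph correspondence these are the deletion and the contraction of the associated edge, which are again plane graphs and so represent alternating diagrams. Changing the remaining $m-1$ marked crossings therefore alternates $D_{0}$ and $D_{1}$, so the links $L_{0},L_{1}$ are at most $m-1$ changes from alternating. Applying Theorem~\ref{viro} at $c$ and reading off $\delta=j-2i$ from $\cdots\to H^{i-1,j-1}(D_{1})\to H^{i,j}(D)\to H^{i,j}(D_{0})\to\cdots$, exactness yields the support containment
\begin{align*}
\{\,\delta\mid H^{i,j}(D)\neq 0,\ j-2i=\delta\,\}\subseteq \operatorname{supp}_{\delta}H(D_{0})\,\cup\,\bigl(\operatorname{supp}_{\delta}H(D_{1})-1\bigr).
\end{align*}

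The hard part will be that a width bound on each summand is by itself useless: two intervals of $\delta$-width $m+1$ can combine into something of width roughly $2m$, far larger than $m+2$. What is really needed is to pin down the \emph{location} of the supports, so I would strengthen the inductive hypothesis to assert that the support lies in a specific block of $m+1$ consecutive diagonals anchored by a signature-type invariant (for the alternating base case this is the single diagonal $\delta=-\sigma$). The crux then becomes showing that the anchoring invariants of $L_{0}$ and $L_{1}$ differ from that of $L$ by a controlled amount, so that $\operatorname{supp}_{\delta}H(D_{0})$ and $\operatorname{supp}_{\delta}H(D_{1})-1$ both fall inside a single block of $m+1$ diagonals; this is precisely the statement that the signature changes boundedly under the two smoothings of a crossing, and it is the heart of the proof. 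A secondary obstacle is that deletion or contraction in the Tait graph may disconnect the diagram, producing a split link whose unnormalized homology is a tensor product and whose $\delta$-width is larger than expected; I would absorb this by carrying the induction in reduced Khovanov homology, where thinness of alternating links and the behaviour of split unions are cleanest, and then transfer the resulting bound back to the unreduced $\operatorname{hw}$ via the usual one-diagonal relation between the two theories.
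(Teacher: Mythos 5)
Your strategy (induction on the number $m$ of crossing changes, base case Lee's thinness theorem for alternating links, inductive step via the unoriented skein long exact sequence of Theorem~\ref{viro}) is a reasonable plan, and the support containment $\operatorname{supp}_{\delta}H(D)\subseteq\operatorname{supp}_{\delta}H(D_{0})\cup(\operatorname{supp}_{\delta}H(D_{1})-1)$ is correctly derived. But the step you yourself label ``the heart of the proof'' is never carried out, and it is not a routine verification: you must show that $\operatorname{supp}_{\delta}H(D_{0})$ and $\operatorname{supp}_{\delta}H(D_{1})-1$ lie in a \emph{common} block of $m+2$ consecutive diagonals, and ``the signature changes boundedly under the two smoothings'' does not give this. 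Bounded change of the anchor only yields a window of width $m+2+C$; what is needed is that the anchors of $L_{0}$ and $L_{1}$ (after the $-1$ shift from the exact sequence) sit at \emph{exactly} the right places relative to the anchor of $L$, and for the signature this fails to be automatic --- under the disoriented smoothing $\sigma$ can jump by an amount governed by linking numbers rather than by $\pm1$. This alignment problem is precisely why Manolescu and Ozsv\'{a}th had to \emph{define} quasi-alternating links by a determinant recursion instead of proving thinness for all almost-alternating links; here you cannot impose such a hypothesis, so the alignment must be proved, and it is missing. Your secondary fix for disconnection is also not sound as stated: the reduced homology of a split union still has strictly larger $\delta$-width than its factors (a tensor factor of $V$ survives), so the inductive hypothesis degrades in exactly the case where a Tait-graph edge is a bridge or a loop.

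For comparison: the present paper does not prove Theorem~\ref{dalt_hw} at all --- it quotes it from \cite{spanning2_kh} --- and the argument there is not inductive. One passes to a spanning-tree deformation of the reduced Khovanov complex, in which each generator corresponds to a spanning tree of the Tait graph and lies on a single $\delta$-diagonal determined explicitly by the tree's activities; one then checks directly that over a $k$-almost alternating diagram these diagonals take at most $k+1$ values, and transfers back to the unreduced theory to get $\operatorname{hw}(L)\le k+2$. That route sidesteps the alignment problem entirely because the diagonal of every generator is computable. To salvage your induction you would have to exhibit a comparably explicit anchor and verify its exact behaviour under both smoothings (including the disconnected cases); until that is done the argument has a genuine gap.
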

\begin{rem}
From Corollary~$\ref{torus_thick}$ and Theorem~$\ref{dalt_hw}$, the $(2k, 2kn)$-torus link has no diagram which is alternating after $k(k-1)n-1$ or less crossing changes. 
\end{rem}
\par
Theorem~$\ref{newmainthm}$ can be regarded as an analog of Theorems~$\ref{thm1}$ and $\ref{thm2}$ and Corollary~$\ref{i_max}$. 
Theorem~$\ref{newmainthm}$ follows from Lemmas~$\ref{mainthm}$, $\ref{mainthm2}$ and $\ref{non-trivial1}$ below. 
We will prove these Lemmas. 
\begin{lem}\label{mainthm}
Let k and n be positive integers. Then we have $\KH^{i}(T_{2k+1, (2k+1)n})=0$ if $i>2k(k+1)n$. 
\end{lem}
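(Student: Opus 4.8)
The plan is to follow Sto{\v s}i{\'c}'s inductive strategy (the one behind Theorem~\ref{thm1}) and adapt it to the odd number of strands. First I would pass to the unnormalized homology. Every crossing of $D_{2k+1,(2k+1)n}=(\sigma_{1}\cdots\sigma_{2k})^{(2k+1)n}$ is positive, so $n_{-}=0$ and $n_{+}=2k(2k+1)n$, whence $\KH^{i,j}(T_{2k+1,(2k+1)n})=H^{i,\,j-n_{+}}(D_{2k+1,(2k+1)n})$. The lemma is therefore equivalent to $H^{i}(D_{2k+1,(2k+1)n})=0$ for $i>2k(k+1)n$, and from now on I work with the complexes $H^{\ast}$, for which the long exact sequence of Theorem~\ref{viro} applies directly and no renormalization intervenes when I resolve crossings.

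Next I would prove a more general vanishing bound, since resolving a single crossing of a torus braid never lands back inside the family of full torus braids. For $0\le m\le p-1$ let $E_{p,q,m}$ denote the closure of $(\sigma_{1}\cdots\sigma_{p-1})^{q}(\sigma_{1}\cdots\sigma_{m})$, so that $E_{p,q,0}=D_{p,q}$ and $E_{p,q,p-1}=D_{p,q+1}$. The diagrams $E_{p,q,0},E_{p,q,1},\dots,E_{p,q,p-1}$ form a ladder whose consecutive terms differ by one crossing. The engine is to apply Theorem~\ref{viro} to the topmost crossing $\sigma_{m}$ of $E_{p,q,m}$: its $0$-smoothing is exactly $E_{p,q,m-1}$, while its $1$-smoothing, after a Reidemeister reduction, simplifies to a braid closure on fewer strands and with fewer crossings. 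The long exact sequence then sandwiches $H^{i,j}(E_{p,q,m})$ between the homology (shifted in $i$ and $j$) of the reduced $1$-smoothing and $H^{i,j}(E_{p,q,m-1})$, so a vanishing bound on the two neighbouring diagrams forces one on $E_{p,q,m}$.

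The overall argument is then a downward induction on the number of crossings, with the two branches playing complementary roles. The $0$-smoothing branch keeps $p=2k+1$ strands and lowers $m$ (and eventually $q$), which organizes into an induction on the number of full twists; over one column $\sigma_{1}\cdots\sigma_{2k}$ one climbs the ladder through its $2k$ crossings, and over a whole full twist (that is, $q\mapsto q+(2k+1)$) the accumulated increment should come out to exactly $2k(k+1)$. The $1$-smoothing branches drop the strand number from $2k+1$ to $2k$, so they are controlled by Sto{\v s}i{\'c}'s even-strand results (Theorem~\ref{thm1}, which I may assume) together with their analogues for the partial braids $E_{2k,q',m'}$. Combining the two inputs along the ladder and iterating over the $n$ twists yields the bound $2k(k+1)n$; specializing the general statement to $p=2k+1$, $q=(2k+1)n$, $m=0$ then gives the lemma.

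The main obstacle is the bookkeeping of the homological and $q$-degree shifts. Each $1$-smoothing branch must be simplified by Reidemeister moves before an inductive bound or the even-strand result applies, and those moves alter $n_{+}$ and $n_{-}$, so identifying the reduced diagram's unnormalized homology with a torus-braid homology introduces nontrivial shifts that have to be propagated through every use of the long exact sequence. Obtaining the sharp constant $2k(k+1)n$, rather than a bound of merely the right order, hinges on choosing the general inductive statement—that is, the precise vanishing function for the partial braids $E_{p,q,m}$, depending on $q \bmod p$—so that all these shifts accumulate correctly and the induction closes; pinning down that function and checking the base cases is the delicate part of the proof.
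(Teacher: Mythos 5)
Your overall skeleton --- pass to the unnormalized homology (legitimate here, since $n_{-}=0$ so the homological degree is unshifted), interpolate between $D_{2k+1,q}$ and $D_{2k+1,q+1}$ by a ladder of diagrams differing in one crossing, apply the long exact sequence of Theorem~\ref{viro} at each rung, and close an induction --- is exactly the shape of the argument the paper uses (the paper obtains this lemma as the unknot case of the general cabling bound, Lemma~\ref{lem_cable}~(2) via Proposition~\ref{newmainprop}, and your ladder $E_{p,q,m}$ is the paper's $D^{p-1-m}_{p,q+1}$ up to indexing). However, there is a concrete error at the decisive step: the $1$-smoothing of a crossing of the $(2k+1)$-strand torus braid does \emph{not} reduce to a braid closure on $2k$ strands. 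The $1$-smoothing replaces the crossing by a cup--cap pair, and sliding the resulting cap once around the closure cancels crossings in Reidemeister~II pairs and consumes \emph{two} through-strands, so the reduced diagram is a partial torus-braid closure on $2k-1$ strands, possibly disjoint union with a circle; this is precisely what Figures~\ref{orientation1} and~\ref{orientation2} record, namely that $E^{i}_{2k+1,(2k+1)n}$ is equivalent to $D^{i-2}_{2k-1,(2k-1)n}$ for $i\geq 2$ and to $D_{2k-1,(2k-1)n}\sqcup\bigcirc$ for $i=1$.

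Consequently Theorem~\ref{thm1} (the even-strand bound) is not the right input for the $1$-smoothing branches, and the induction cannot be outsourced to Sto{\v s}i{\'c}'s result: it must be an induction on $k$ \emph{within} the odd-strand family, whose hypothesis concerns the partial diagrams $D^{s}_{2k-1,\cdot}$, with base case $k=1$ handled by the known homology of the $(3,q)$-torus links. The sharp constant also hinges on this identification: the reduced $(2k-1)$-strand diagram has maximal unnormalized degree about $2k(k-1)n$, and the $4kn-1$ negative crossings created by the isotopy shift this to exactly $2k(k+1)n-1$, which is the threshold the long exact sequence needs; with a $2k$-strand target the degree shifts would not assemble to $2k(k+1)n$. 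As written, the proposal therefore does not go through; it is repaired by replacing the appeal to Theorem~\ref{thm1} with the correct $(2k-1)$-strand inductive hypothesis, after which it coincides with the paper's argument.
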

\begin{proof}
In Section $\ref{main2}$, we prove Proposition~$\ref{newmainprop}$, which implies Lemma~$\ref{mainthm}$. 
\end{proof}
\par
Next we introduce Lemma~$\ref{mainthm2}$. 
We can consider Lemma~$\ref{mainthm2}$ to be an analog of the first claim of Theorem~$\ref{thm2}$. 
\begin{lem}\label{mainthm2}
Let k and n be positive integers. Then we have 
\begin{align*}
\dim_{\mathbf{Q}}\KH^{2k(k+1)n}(T_{2k+1, (2k+1)n})=\begin{pmatrix}
2k+2\\
k+1
\end{pmatrix}.
\end{align*}
\end{lem}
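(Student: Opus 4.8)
The plan is to pin down $\dim_{\mathbf{Q}}\KH^{i_{\max}}$ at the top homological degree $i_{\max}:=2k(k+1)n$, which by Lemma~\ref{mainthm} is indeed the largest degree in which $\KH(T_{2k+1,(2k+1)n})$ is nonzero, by establishing matching bounds. The lower bound $\dim_{\mathbf{Q}}\KH^{i_{\max}}\geq\binom{2k+2}{k+1}$ will come from Lee homology together with the spectral sequence of Theorem~\ref{spect}, while the reverse inequality will be extracted from the long exact sequence of Theorem~\ref{viro} by an induction that feeds on Sto{\v s}i{\'c}'s even-strand computation (Theorem~\ref{thm2}).

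For the lower bound I would first record the linking data: with the given orientation $T_{2k+1,(2k+1)n}$ has exactly $2k+1$ components $S_1,\dots,S_{2k+1}$, and every pair satisfies $\operatorname{lk}(S_j,S_m)=n$. Feeding this into Proposition~\ref{4.3}, a subset $E\subseteq\{2,\dots,2k+1\}$ of size $a$ contributes to $\operatorname{Lee}^i$ in degree $i=2n\,a(2k+1-a)$; since $a(2k+1-a)$ is maximized precisely at $a=k$ and $a=k+1$ with common value $k(k+1)$, the top Lee degree is $2k(k+1)n=i_{\max}$ and
\[
\dim_{\mathbf{Q}}\operatorname{Lee}^{i_{\max}}=2\left(\binom{2k}{k}+\binom{2k}{k+1}\right)=2\binom{2k+1}{k+1}=\binom{2k+2}{k+1},
\]
using Pascal's rule and the symmetry of binomial coefficients. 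Because the differentials of the Theorem~\ref{spect} spectral sequence raise the homological degree by one, no differential can leave $E_r^{i_{\max}}$ (its target is a subquotient of the vanishing group $\KH^{i_{\max}+1}$); hence $E_\infty^{i_{\max}}$ is a quotient of $E_2^{i_{\max}}=\KH^{i_{\max}}$, giving $\dim_{\mathbf{Q}}\KH^{i_{\max}}\geq\dim_{\mathbf{Q}}\operatorname{Lee}^{i_{\max}}=\binom{2k+2}{k+1}$.

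For the upper bound I would resolve the last crossing of the braid $(\sigma_1\cdots\sigma_{2k})^{(2k+1)n}$ and run the long exact sequence of Theorem~\ref{viro}, in which the oriented resolution $D_0$ is again a positive braid closure on $2k+1$ strands and the disoriented resolution $D_1$ caps off two strands and so reduces, up to isotopy and Reidemeister moves, to a diagram built on $2k$ strands. At $i=i_{\max}$ the exactness of
\[
H^{i_{\max}-1,\,j-1}(D_1)\to H^{i_{\max},\,j}(D)\to H^{i_{\max},\,j}(D_0)
\]
bounds $\dim_{\mathbf{Q}} H^{i_{\max}}(D)$ by the sum of the top-degree homologies of $D_0$ and $D_1$; the $D_1$-term is controlled by the even-strand computation of Theorem~\ref{thm2}, and the $D_0$-term by the inductive hypothesis (on $n$, and secondarily on the number of strands, exactly as in Sto{\v s}i{\'c}'s argument for Theorems~\ref{thm1} and \ref{thm2}). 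Comparing this upper bound with the Lee lower bound forces equality and so identifies the dimension as $\binom{2k+2}{k+1}$.

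The \emph{main obstacle} is the upper bound: one must track the precise homological and $q$-degrees along the long exact sequence so that the contributions from $D_0$ and $D_1$ land in the expected positions and do not overshoot $\binom{2k+2}{k+1}$, and one must set up the induction with correct base cases (small $n$) so that the intermediate diagrams really do reduce to links whose top Khovanov homology is known. In effect the delicate point is showing that the connecting homomorphisms at the top degree behave as cleanly as the Lee computation predicts; once the bookkeeping of degrees is in place, the numerical match with $\binom{2k+2}{k+1}$ completes the proof.
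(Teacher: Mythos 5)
Your lower bound is exactly the paper's: Proposition~\ref{4.3} gives $\dim_{\mathbf{Q}}\operatorname{Lee}^{2k(k+1)n}=2\bigl(\binom{2k}{k}+\binom{2k}{k+1}\bigr)=\binom{2k+2}{k+1}$, and the spectral sequence of Theorem~\ref{spect} converts this into a lower bound on $\dim_{\mathbf{Q}}\KH^{2k(k+1)n}$. That half is correct and needs no change.

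The upper bound, however, contains a concrete error. You claim that the $1$-resolution of a crossing of the $(2k+1)$-strand braid ``reduces to a diagram built on $2k$ strands'' and is therefore controlled by the even-strand computation of Theorem~\ref{thm2}. It is not: $1$-smoothing a crossing of type $\sigma_{2k}$ caps off a pair of adjacent strands, and after isotopy the closure becomes a torus-braid-type diagram on $2k-1$ strands (possibly with a disjoint circle) --- see the identifications of $\overline{E^{i}_{2k+1,(2k+1)n}}$ with $\overline{D^{i-2}_{2k-1,(2k-1)n}}$ in the paper. So the $E$-terms live in the \emph{odd}-strand family with $k$ replaced by $k-1$, and Sto{\v s}i{\'c}'s Theorem~\ref{thm2} cannot be invoked; one is forced to run an induction on $k$ within the odd-strand family, and moreover the inductive statement must be strengthened to cover all the partial resolutions $D^{l}_{2k+1,(2k+1)n}$ (the paper's equation $(\ref{a})$), because the $E$-terms one encounters are closures of $D^{i-2}_{2k-1,(2k-1)n}$ for various $i$, not just of $D_{2k-1,(2k-1)n}$ itself. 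A second, smaller gap: your proposed induction ``on $n$'' for the $D_0$-term does not close up, since after $0$-smoothing one crossing the diagram is no longer a torus braid; one must resolve all $2k$ crossings of the top row, and the telescope terminates at $D_{2k+1,(2k+1)n-1}$, whose vanishing in degree $2k(k+1)n$ is \emph{not} an instance of the induction hypothesis (the statement is only about $q=(2k+1)n$) and requires the separate technical Lemma~\ref{lem1}. Once these two points are repaired --- induct on $k$ over the strengthened odd-strand statement, and supply the vanishing of $H^{2k(k+1)n}(D_{2k+1,(2k+1)n-1})$ --- your architecture coincides with the paper's proof.
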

To prove Lemma~$\ref{mainthm2}$, we use the same notation as Sto{\v s}i{\'c}'s in \cite{stosic}. \par
\begin{defn}[\cite{stosic}]
Let $K$ be any positive braid link, that is, $K$ has a diagram which is the closure of a positive braid. 
Let $D$ be its diagram which is the closure of a positive braid with $p$ strands. 
The crossing $c$ of $D$ is of the type $\sigma _{i}$ $(i<p)$ if it corresponds to the generator $\sigma _{i}$ in the positive braid. 
Let $c^{i}_{1}, \dots, c^{i}_{l_{i}}$ be of the type $\sigma _{i}$ crossings of $D$ and order them from top to bottom in the positive braid. 
Then we denote the crossing $c^{i}_{\alpha }$ by $(i, \alpha )$, where $1\leq i\leq p$ and $1\leq \alpha \leq l_{i}$. 
\par
Let $3\leq p\leq q$. 
Let $E_{p, q}^{1}$ and $D_{p, q}^{1}$ be the diagrams obtained from $D_{p, q}$ by $1$-smoothing and  $0$-smoothing at the crossing $(p-1, 1)$ of $D_{p, q}$, respectively. 
We continue the same process. 
Let $E_{p, q}^{2}$ and $D_{p, q}^{2}$ be the diagrams obtained from $D_{p, q}^{1}$ by $1$-smoothing and  $0$-smoothing at the crossing $(p-2, 1)$ of $D_{p, q}^{1}$ respectively. 
Repeating this process $p-1$ times, that is, for any $k=1, \dots, p-1$, 
let $E_{p, q}^{k}$ and $D_{p, q}^{k}$ be the diagrams obtained from $D_{p, q}^{k-1}$ by $1$-smoothing and  $0$-smoothing at the crossing $(p-k, 1)$ of $D_{p, q}^{k-1}$ respectively. 
Note that $D_{p, q}^{0}=D_{p,q}$ and that $D_{p,q}^{p-1}=D_{p,q-1}$. 
For example, see Figure~$\ref{the_diagrams}$. 
\par
We define $H^{i, j}(E^{k}_{p,q}):=H^{i, j}(\overline{E^{k}_{p,q}})$ and $H^{i, j}(D^{k}_{p,q}):=H^{i, j}(\overline{D^{k}_{p,q}})$, where $\overline{E^{k}_{p,q}}$ and $\overline{D^{k}_{p,q}}$ are the closure of $E^{k}_{p,q}$ and $D^{k}_{p,q}$, respectively. 
%
\begin{figure}[!h]
\begin{center}
\includegraphics[scale=0.45]{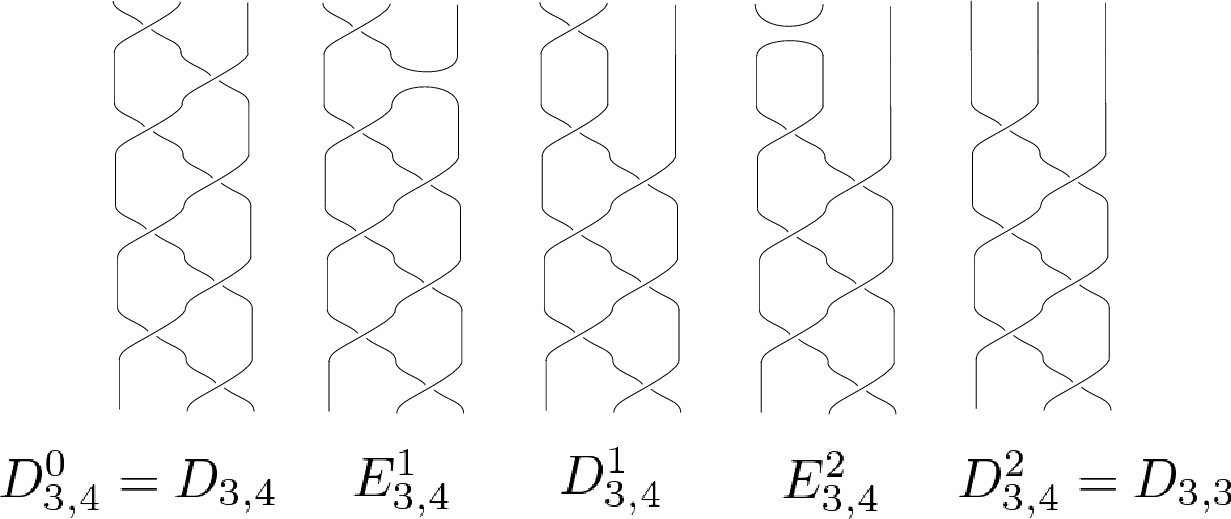}
\end{center}
\caption{$D_{3,4}=D_{3, 4}^{0}$, $E_{3,4}^{1}$, $D_{3,4}^{1}$, $E_{3,4}^{2}$, and $D_{3,4}^{2}=D_{3,3}$. }
\label{the_diagrams}
\end{figure}
\end{defn}
%
%
%
\par
From Theorem~$\ref{viro}$, we have the following long exact sequence for $k=1, \dots, p-1$: 
\begin{align}
\cdots\to H^{i-1, j-1}(E_{p, q}^{k})\to H^{i, j}(D_{p, q}^{k-1})\to H^{i, j}(D_{p, q}^{k})\to H^{i, j-1}(E_{p, q}^{k})\to\cdots. \label{long}
\end{align}
We use the following lemma, whose proof will be given in Section~$\ref{appendix}$. 
\begin{lem}\label{lem1}
Let k and n be positive integers. Then we have 
\begin{center}
$H^{2k(k+1)n}(D_{2k+1, (2k+1)n-1})=0$. 
\end{center}
\end{lem}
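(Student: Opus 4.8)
The plan is to bound the maximal homological degree of the torus \emph{knot} $T_{2k+1,(2k+1)n-1}$ (note $\gcd(2k+1,(2k+1)n-1)=1$, so this is indeed a knot) by $N-1$, where $N:=2k(k+1)n$; the claim $H^{N}=0$ of Lemma~\ref{lem1} is then immediate. Write $p=2k+1$ and $q_{0}=(2k+1)n-1$, and view $D_{p,q_{0}}$ as the top diagram $D^{0}_{p,q_{0}}$ of its own reduction chain. I would run the long exact sequences \eqref{long} (with running index $m=1,\dots,p-1$, renamed to avoid clashing with the fixed $k$), which connect $D^{0}_{p,q_{0}}=D_{p,q_{0}}$ down to $D^{\,p-1}_{p,q_{0}}=D_{p,q_{0}-1}$ through the intermediate diagrams $E^{1}_{p,q_{0}},\dots,E^{\,p-1}_{p,q_{0}}$. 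The whole argument is organized as an induction on the crossing number $(p-1)q$: both $D_{p,q_{0}-1}$ and the reductions of the $E^{m}_{p,q_{0}}$ have strictly fewer crossings than $D_{p,q_{0}}$, so their maximal homological degrees are available from the inductive hypothesis (and, where applicable, from Sto{\v s}i{\'c}'s even-case Theorems~\ref{thm1} and~\ref{thm2}), the base cases being the alternating $2$-strand torus links.

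The propagation itself is clean and runs \emph{upward} through \eqref{long}: exactness of $H^{i-1}(E^{m}_{p,q_{0}})\to H^{i}(D^{m-1}_{p,q_{0}})\to H^{i}(D^{m}_{p,q_{0}})$ shows that $H^{i-1}(E^{m}_{p,q_{0}})=0$ together with $H^{i}(D^{m}_{p,q_{0}})=0$ forces $H^{i}(D^{m-1}_{p,q_{0}})=0$. Setting $B:=\max\{\deg D_{p,q_{0}-1},\ \max_{m}\deg\overline{E^{m}_{p,q_{0}}}+1\}$, where $\deg$ denotes maximal homological degree, a descending induction on $m$ starting from $H^{i}(D^{\,p-1}_{p,q_{0}})=H^{i}(D_{p,q_{0}-1})=0$ for $i>B$ gives $H^{i}(D^{m}_{p,q_{0}})=0$ for all $m$ and all $i>B$; in particular $H^{i}(D_{p,q_{0}})=0$ for $i>B$. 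Thus the bound for $D_{p,q_{0}}$ is governed by the degrees of the intermediate diagrams, and the one remaining task is to compute $\max_{m}\deg\overline{E^{m}_{p,q_{0}}}$.

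The heart of the matter, and the main obstacle, is this last computation. I claim that each closure $\overline{E^{m}_{p,q_{0}}}$ can be simplified by Reidemeister~I and~II moves to the closure of a positive braid on $p-1=2k$ strands, i.e.\ to an even-strand torus link $T_{2k,q'}$ together with a controlled number of positive kinks; this is the odd-case analogue of Sto{\v s}i{\'c}'s reduction of the $E^{m}$-diagrams. Granting the reduction, the maximal homological degree of $\overline{E^{m}_{p,q_{0}}}$ is read off from the even case via Theorem~\ref{thm1} and Corollary~\ref{i_max} (and, for the values of $q'$ not divisible by $2k$, from the same induction), after correcting by the explicit bigrading shift produced by the kinks. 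The delicate point is then purely arithmetic: one must verify that this maximum equals exactly $N-2$, so that $\max_{m}\deg\overline{E^{m}_{p,q_{0}}}+1=N-1$ and the upward propagation yields $H^{N}(D_{p,q_{0}})=0$, and not merely the weaker vanishing $H^{i}(D_{p,q_{0}})=0$ for $i>N$. Identifying the Reidemeister reductions of the $E^{m}_{p,q_{0}}$ precisely, and tracking their grading shifts so that the target degree comes out as $N-1$, is the laborious step; the resulting bound is consistent with, and complementary to, the divisible case $H^{i}(T_{2k+1,(2k+1)n})=0$ for $i>N$ of Lemma~\ref{mainthm}.
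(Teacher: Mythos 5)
Your overall skeleton --- climbing from $D_{2k+1,(2k+1)(n-1)}$ up to $D_{2k+1,(2k+1)n-1}$ through the long exact sequences and controlling the homology of the intermediate $E$-diagrams --- is the same as the paper's, which packages the climb as Lemma~\ref{lem1_lem} specialized to the unknot and then quotes Lemma~\ref{mainthm3}. But the claim you single out as ``the heart of the matter'' is false as stated: the closure of $E^{m}_{2k+1,q}$ does \emph{not} simplify to a positive braid closure on $p-1=2k$ strands. The cap created by the $1$-smoothing cancels crossings in pairs as it is slid through the braid, so the strand number drops by \emph{two}, not one: as in Figures~\ref{orientation1} and~\ref{orientation2}, $\overline{E^{i}_{2k+1,(2k+1)n}}$ is equivalent to $\overline{D^{i-2}_{2k-1,(2k-1)n}}$ for $i\geq 2$ and to $\overline{D_{2k-1,(2k-1)n}}\sqcup\bigcirc$ for $i=1$, and the same parity persists for $q=(2k+1)n-1$. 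Hence the reduced diagrams live on $2k-1$ strands (odd again), are of the partially smoothed form $D^{s}_{2k-1,q'}$ rather than torus-link diagrams decorated with kinks, and they acquire genuinely negative crossings under the induced orientation. Consequently you cannot invoke Theorem~\ref{thm1}, Theorem~\ref{thm2} or Corollary~\ref{i_max} for them; you are forced either into a recursion within the odd case itself or into a direct count of the negative crossings of each $E^{m}$, which is what the paper does.

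Second, the decisive step --- verifying that after the correct reduction and the correct degree shift the bound comes out to exactly $2k(k+1)n-1$ rather than something weaker --- is precisely what you defer as ``the laborious step,'' and it is the actual content of the paper's argument: Claim~\ref{key_claim2} together with Claim~\ref{last} count the positive crossings of $E^{m}(2k+1,(2k+1)(n+f-1)+j)$ in four parts and establish the inequality $l(2k+1)^{2}+2k(k+1)(n-l)-2\geq 2(k-x)(k-x+1)(n-l+1)+l_{+}(2k+1-2x)^{2}+n_{-}$, which, specialized to the unknot ($l=f=0$), is exactly the statement that every relevant $E^{m}$ has vanishing homology in degree $2k(k+1)n-1$. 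Without that computation the propagation does not close, and with the wrong strand count the computation cannot even be set up. Note also that the full strength you aim for (maximal homological degree of the torus knot $T_{2k+1,(2k+1)n-1}$ at most $2k(k+1)n-1$) is more than is needed; the paper only propagates the single degree $2k(k+1)n$ down to $H^{2k(k+1)n}(D_{2k+1,(2k+1)(n-1)})=0$, which is already available from Lemma~\ref{mainthm3}.
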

\begin{proof}[Proof of Lemma~$\ref{mainthm2}$]
To prove this lemma, it is sufficient to prove the following: 
\begin{align}
\dim_{\mathbf{Q}} H^{2k(k+1)n}(D_{2k+1, (2k+1)n}^{l})=2
\begin{pmatrix}
2k+1-l\\
k+1
\end{pmatrix}, \label{a}
\end{align}
where $0\leq l\leq 2k$ (for convenience, we define 
$\begin{pmatrix}
a\\
b
\end{pmatrix}=0$ if $0\leq a<b$). 
Indeed, if we put $l=0$ in $(\ref{a})$ then we have 
\begin{align*}
\dim_{\mathbf{Q}}\KH^{2k(k+1)n}(T_{2k+1, (2k+1)n})&=\dim_{\mathbf{Q}} H^{2k(k+1)n}(D_{2k+1, (2k+1)n}^{0})\\
&=2
\begin{pmatrix}
2k+1\\
k+1
\end{pmatrix}
=\begin{pmatrix}
2k+2\\
k+1
\end{pmatrix}. 
\end{align*}
%
To prove $(\ref{a})$, we use induction on $k$. 
\par
For $k=1$, we need to compute $H^{4n}(D_{3, 3n})$, $H^{4n}(D_{3, 3n}^{1})$ and $H^{4n}(D_{3, 3n}^{2})$. 
Note that $D_{3, 3n}^{2}=D_{3, 3n-1}$. 
The Khovanov homology of the $(3, q)$-torus link is known (for example, see \cite[Theorem~$8$]{stosic2} or \cite[Theorem~$3.1$]{turner}). 
In particular, 
\begin{align*}
\dim_{\mathbf{Q}} H^{4n}(D^{2}_{3,3n})=\dim_{\mathbf{Q}} H^{4n}(D_{3,3n-1})=0
\end{align*}
and 
\begin{align*}
\dim_{\mathbf{Q}} H^{4n}(D^{0}_{3,3n})=\dim_{\mathbf{Q}} H^{4n}(D_{3,3n})=6. 
\end{align*}
Next we compute the Khovanov homology of $D^{1}_{3,3n}$. 
We have the following long exact sequence: 
\begin{align}
\cdots\to H^{4n-1, j}(D_{3, 3n}^{2})&\to H^{4n-1, j-1}(E_{3, 3n}^{2})\to H^{4n, j}(D_{3, 3n}^{1})\to 0. \label{long1}
\end{align}
We can check that the closure of $E_{3, 3n}^{2}$ is a diagram of the unknot and that it has $4n-1$ negative crossings and $2n-1$ positive crossings. 
From the definition of the Khovanov homology, we obtain 
\begin{align*}
H^{4n-1, j-1}(E_{3, 3n}^{2})=\KH^{0, j-6n}(U)
=\begin{cases}
\mathbf{Q}& \text{if\ }j=6n\pm1,\\
0& \text{if\ }j\neq 6n\pm 1, 
\end{cases}
\end{align*}
where $U$ is the unknot. \par
Hence, from $(\ref{long1})$, we have 
\begin{center}
$\dim_{\mathbf{Q}} H^{4n}(D_{3, 3n}^{1})\leq  2$. 
\end{center}
On the other hand, from Proposition~$\ref{4.3}$, the dimension of $\operatorname{Lee}^{4n}(D_{3, 3n}^{1})$ is $2$. 
Since there is a spectral sequence whose $E_{\infty}$-page is the Lee homology and $E_{2}$-page is the Khovanov homology (Theorem~$\ref{spect}$), we have 
\begin{align*}
\dim_{\mathbf{Q}} H^{4n}(D_{3, 3n}^{1})\geq 2. 
\end{align*}
Hence we obtain 
\begin{align*}
\dim_{\mathbf{Q}} H^{4n}(D_{3, 3n}^{1})=2. 
\end{align*}
\par
Suppose that $(\ref{a})$ is true for $1, \dots, k-1$, that is, suppose that for $1\leq h< k$, $n>0$ and $l=0, \dots, 2h$, we have 
\begin{align}
\dim_{\mathbf{Q}} H^{2h(h+1)n}(D_{2h+1, (2h+1)n}^{l})=2
\begin{pmatrix}
2h+1-l\\
h+1
\end{pmatrix}. \label{a_suppose}
\end{align}
We will show that $(\ref{a})$ is true for $k$. 
For $l=0, \dots, 2k-1$, we obtain the following long exact sequence: 
\begin{multline}
\cdots\to H^{2k(k+1)n-1, j-1}(E_{2k+1, (2k+1)n}^{l+1})\xrightarrow{g^{l}_{j}} H^{2k(k+1)n, j}(D_{2k+1, (2k+1)n}^{l})\\
\xrightarrow{f^{l}_{j}} H^{2k(k+1)n, j}(D_{2k+1, (2k+1)n}^{l+1})\to \cdots . \label{kanzen}
\end{multline}
From the exact sequence $(\ref{kanzen})$, we obtain 
\begin{align}
&\sum_{j}\dim_{\mathbf{Q}} H^{2k(k+1)n, j}(D_{2k+1, (2k+1)n}^{l}) \label{add1} \\
&\leq \sum_{j}(\dim_{\mathbf{Q}} \im {g^{l}_{j}}+\dim_{\mathbf{Q}} \im {f^{l}_{j}}) \nonumber \\
&\leq \sum_{j} (\dim_{\mathbf{Q}} H^{2k(k+1)n-1, j-1}(E_{2k+1, (2k+1)n}^{l+1})+\dim_{\mathbf{Q}} H^{2k(k+1)n, j}(D_{2k+1, (2k+1)n}^{l+1}))\nonumber\\
&\leq \cdots \leq \nonumber\\
&\leq \sum_{j}\sum_{m=l+1}^{2k}\dim_{\mathbf{Q}} H^{2k(k+1)n-1, j-1}(E_{2k+1, (2k+1)n}^{m})\nonumber 
+\dim_{\mathbf{Q}} H^{2k(k+1)n}(D^{2k}_{2k+1, (2k+1)n})\nonumber. 
\end{align}
From Lemma~$\ref{lem1}$, we have $\dim_{\mathbf{Q}} H^{2k(k+1)n}(D_{2k+1, (2k+1)n-1})=0$. 
To compute $\dim_{\mathbf{Q}} H^{2k(k+1)n-1}(E_{2k+1, (2k+1)n}^{m})$, we consider the closure of $E_{2k+1, (2k+1)n}^{m}$. 
Note that the closure of $E_{2k+1, (2k+1)n}^{i}$ is equivalent to the closure of $D_{2k-1, (2k-1)n}^{i-2}$ for $i\geq 2$ (see Figure~$\ref{orientation1}$). 
We give the closure of $E_{2k+1, (2k+1)n}^{i}$ an orientation such that all crossings of the closure of $D_{2k-1, (2k-1)n}^{i-2}$ are positive. 
Then we can check that the closure of $E_{2k+1, (2k+1)n}^{i}$ has $4kn-1$ negative crossings. 
Hence for $i\geq 2$ we have
\begin{align*}
H^{2(k+1)kn-1}(E_{2k+1, (2k+1)n}^{i})=\KH^{2(k-1)kn}(D_{2k-1, (2k-1)n}^{i-2}). 
\end{align*}
Similarly, the closure of $E_{2k+1, (2k+1)n}^{1}$ is equivalent to the closure of $D_{2k-1, (2k-1)n}\sqcup\bigcirc $, where $\bigcirc $ is a circle in the plane (see Figure~$\ref{orientation2}$). 
We give the closure of $E_{2k+1, (2k+1)n}^{1}$ an orientation such that all crossings of the closure of $D_{2k-1, (2k-1)n}\sqcup\bigcirc $ are positive. 
Then we can check that the closure of $E_{2k+1, (2k+1)n}^{1}$ also has $4kn-1$ negative crossings. 
Hence we have 
\begin{align*}
H^{2(k+1)kn-1}(E_{2k+1, (2k+1)n}^{1})=\KH^{2(k-1)kn}(D_{2k-1, (2k-1)n}\sqcup\bigcirc ). 
\end{align*}
\begin{figure}[!h]
\begin{center}
\includegraphics[scale=0.46]{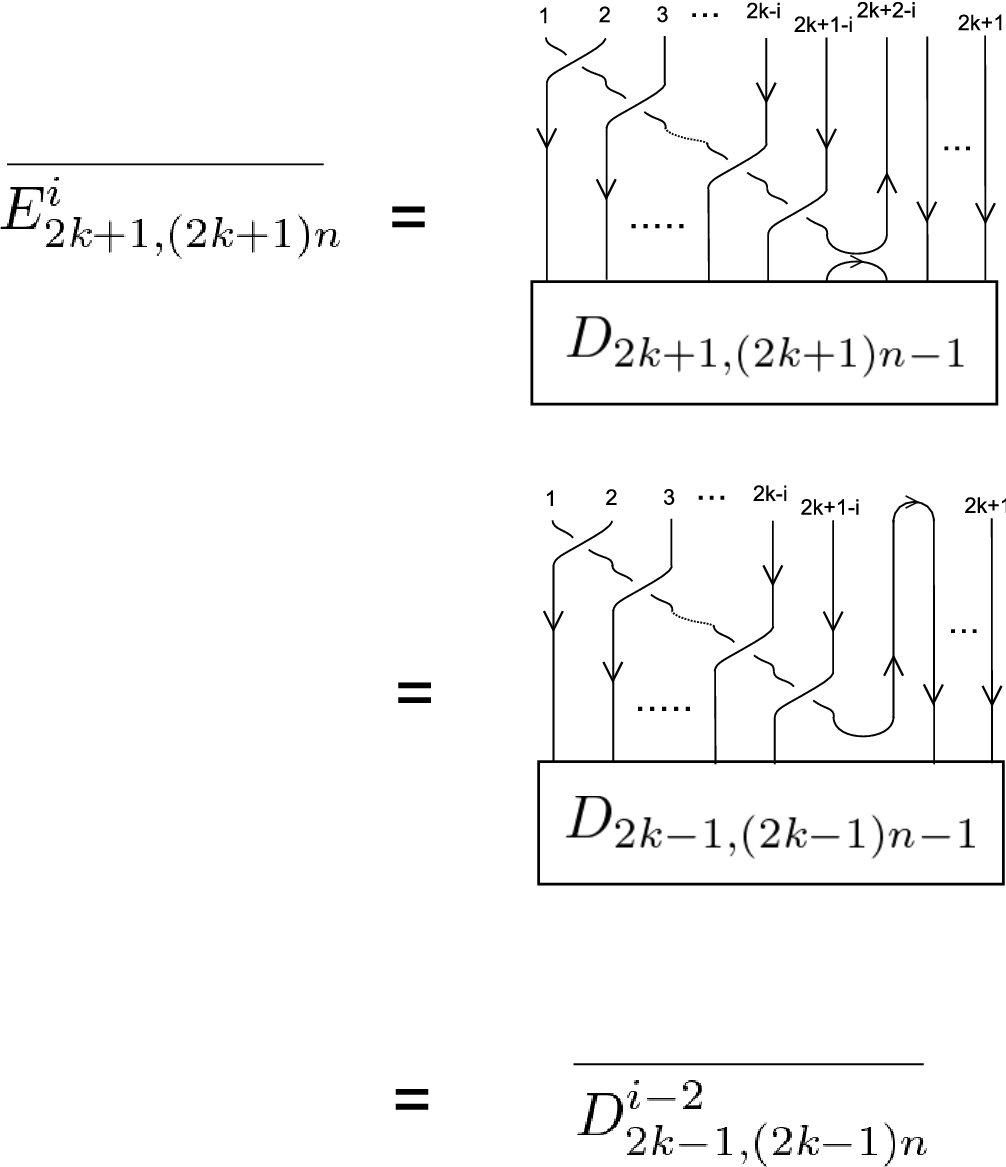}
\end{center}
\caption{The closure of $E_{2k+1, (2k+1)n}^{i}$ is equivalent to the closure of $D_{2k-1, (2k-1)n}^{i-2}$ for $i\geq 2$. }
\label{orientation1}
\end{figure}
\begin{figure}[!h]
\begin{center}
\includegraphics[scale=0.46]{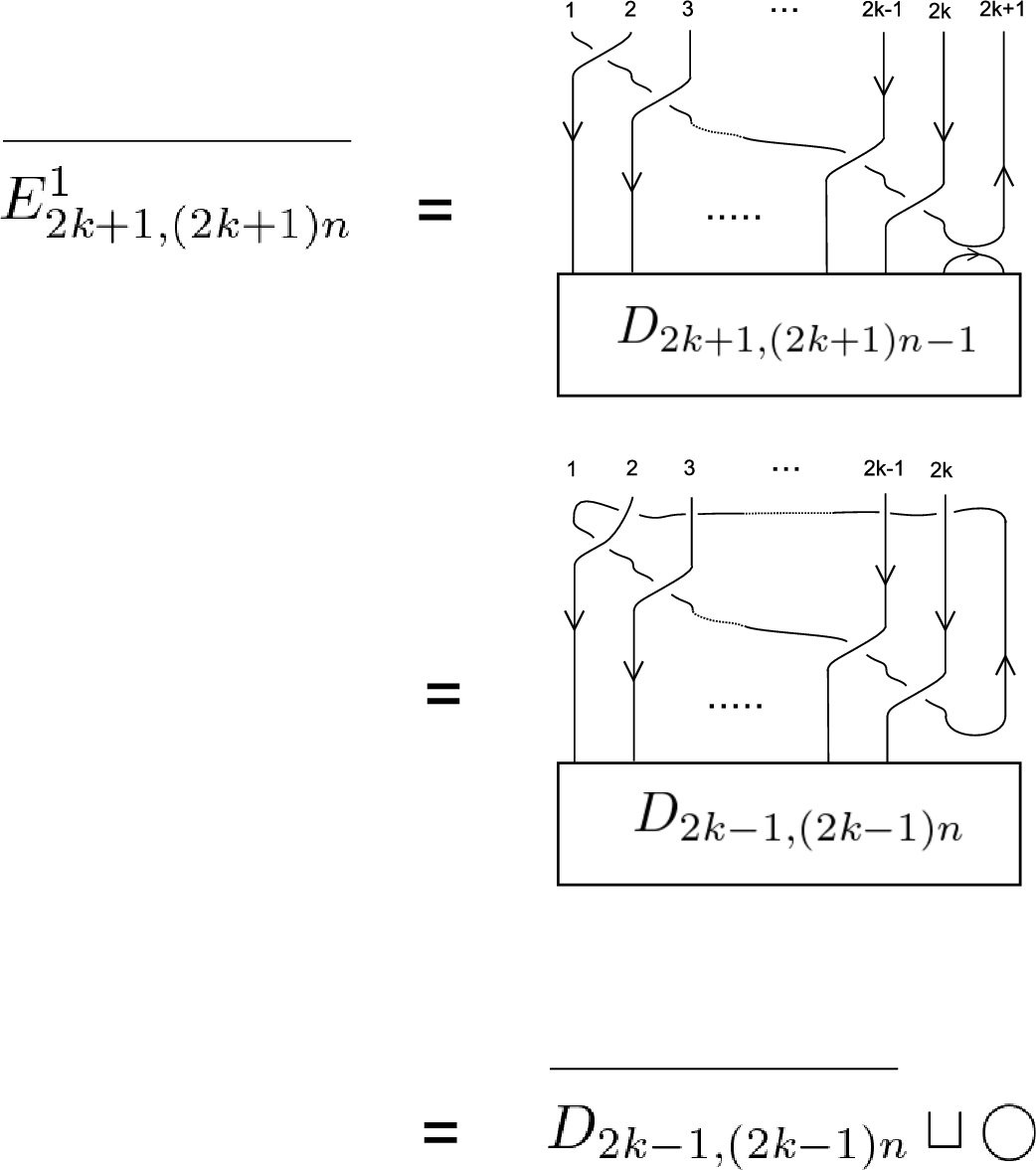}
\end{center}
\caption{The closure of $E_{2k+1, (2k+1)n}^{1}$ is equivalent to the closure of $D_{2k-1, (2k-1)n}\sqcup\bigcirc $. }
\label{orientation2}
\end{figure}
By the induction hypothesis $(\ref{a_suppose})$, we obtain  
\begin{align}
\dim_{\mathbf{Q}} H^{2(k+1)kn-1}(E_{2k+1, (2k+1)n}^{i})&=2
\begin{pmatrix}
2k+1-i\\
k
\end{pmatrix} \text{\ }(i\geq 2), \label{add2}
\end{align}
\begin{align}
\dim_{\mathbf{Q}} H^{2(k+1)kn-1}(E_{2k+1, (2k+1)n}^{1})&=2\times 2
\begin{pmatrix}
2k-1\\
k
\end{pmatrix}
=2
\begin{pmatrix}
2k\\
k
\end{pmatrix}. \label{add3}
\end{align}
From $(\ref{add1})$, $(\ref{add2})$ and $(\ref{add3})$, we obtain  
\begin{align}
\sum_{j}\dim_{\mathbf{Q}} H^{2k(k+1)n, j}(D_{2k+1, (2k+1)n}^{l})
\leq\sum_{m=l+1}^{2k}2\begin{pmatrix}
2k+1-m\\
k
\end{pmatrix}
=2\begin{pmatrix}
2k+1-l\\
k+1
\end{pmatrix}. \label{hutousiki1}
\end{align}
\par
Finally we will prove that the inequality in $(\ref{hutousiki1})$ is in fact an equality for $l=0, \dots, 2k$. 
%
At first, we consider the case where $l=0$. 
The dimension of $\operatorname{Lee}^{2k(k+1)n}(D_{2k+1, (2k+1)n})$ is $\begin{pmatrix}
2k+2\\
k+1
\end{pmatrix}$.
From Theorem~$\ref{spect}$, we have 
\begin{align*} 
\begin{pmatrix}
2k+2\\
k+1
\end{pmatrix}&=\dim_{\mathbf{Q}} \operatorname{Lee}^{2k(k+1)n}(D_{2k+1, (2k+1)n})\\
&\leq \dim_{\mathbf{Q}} H^{2k(k+1)n}(D_{2k+1, (2k+1)n})
\leq \begin{pmatrix}
2k+2\\
k+1
\end{pmatrix}. 
\end{align*}
This implies that we have the equality in $(\ref{hutousiki1})$ for $l=0$.  
Hence, for any $j\in \mathbf{Z}$ and $m=0, \dots, 2k-1$, the maps $g^{m}_{j}$ and $f^{m}_{j}$ in $(\ref{kanzen})$ are injective and surjective, respectively. 
In particular, we obtain 
\begin{align}
\dim_{\mathbf{Q}}\im g^{m}_{j}&=\dim_{\mathbf{Q}} H^{2k(k+1)n-1, j-1}(E_{2k+1, (2k+1)n}^{m+1}),\label{g}
\end{align}
\begin{align}
\dim_{\mathbf{Q}}\im f^{m}_{j}&=\dim_{\mathbf{Q}} H^{2k(k+1)n, j}(D_{2k+1, (2k+1)n}^{m+1}). \label{f}
\end{align}
%
%
From $(\ref{g})$ and $(\ref{f})$, we have the equality in $(\ref{hutousiki1})$ for $l=0, \dots, 2k$ and obtain 
\begin{align*}
\dim_{\mathbf{Q}} H^{2k(k+1)n}(D_{2k+1, (2k+1)n}^{l-1})
=2\begin{pmatrix}
2k+2-l\\
k+1
\end{pmatrix}.
\end{align*}
\end{proof}
%
%
%
%
%
The following lemma can be regarded as an analog of the second claim of Theorem~$\ref{thm2}$. 
\begin{lem}\label{non-trivial1}
For $i=0, \dots, k+1$, we have 
\begin{center}
$\KH^{2k(k+1)n, 6k(k+1)n+1-2i}(T_{2k+1, (2k+1)n})\neq 0$. 
\end{center}
\end{lem}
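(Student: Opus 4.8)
The plan is to extract from the proof of Lemma~\ref{mainthm2} the \emph{graded} short exact sequences at the top homological degree $P:=2k(k+1)n$, and to feed them into an induction on $k$. Concretely, equations~(\ref{g}) and~(\ref{f}) say that at homological degree $P$ the map $g$ is injective and $f$ surjective, so the long exact sequence~(\ref{kanzen}) splits into short exact sequences; hence for every $j$ and every $l=0,\dots,2k-1$ we have $\dim_{\mathbf{Q}}H^{P,j}(D^{l}_{2k+1,(2k+1)n})=\dim_{\mathbf{Q}}H^{P-1,j-1}(E^{l+1}_{2k+1,(2k+1)n})+\dim_{\mathbf{Q}}H^{P,j}(D^{l+1}_{2k+1,(2k+1)n})$. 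Telescoping over $l$ and using Lemma~\ref{lem1} to kill the $l=2k$ term $H^{P}(D_{2k+1,(2k+1)n-1})$, I would obtain the graded identity
\begin{align*}
\dim_{\mathbf{Q}}H^{P,j}(D_{2k+1,(2k+1)n})=\sum_{m=1}^{2k}\dim_{\mathbf{Q}}H^{P-1,j-1}(E^{m}_{2k+1,(2k+1)n}).
\end{align*}
Since $D_{2k+1,(2k+1)n}$ has $2k(2k+1)n$ positive and no negative crossings, $\KH^{P,j}(T_{2k+1,(2k+1)n})=H^{P,\,j-2k(2k+1)n}(D_{2k+1,(2k+1)n})$. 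As every summand on the right is a non-negative dimension, it suffices, for each target grading, to exhibit a single index $m$ whose summand is nonzero; no cancellation can occur.

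The second step is to translate each summand into the Khovanov homology of a lower torus link. Using the identifications already established in the proof of Lemma~\ref{mainthm2} (the closure of $E^{m}_{2k+1,(2k+1)n}$ is equivalent to that of $D^{m-2}_{2k-1,(2k-1)n}$ for $m\geq 2$, and to $D_{2k-1,(2k-1)n}\sqcup\bigcirc$ for $m=1$), together with $n_{-}(\overline{E^{m}})=4kn-1$ and the crossing count $n_{+}(\overline{E^{m}})=2k(2k+1)n-m-(4kn-1)$, the grading shifts collapse. Writing $P'=2(k-1)kn$, the bookkeeping gives
\begin{align*}
H^{P-1,\,j-2k(2k+1)n-1}(E^{m}_{2k+1,(2k+1)n})=\KH^{P',\,j-12kn-m+2}(D^{m-2}_{2k-1,(2k-1)n})
\end{align*}
for $m\geq 2$, and the same formula with $T_{2k-1,(2k-1)n}\sqcup U$ in place of the right-hand link for $m=1$. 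The key observation is that \emph{only} $m=1$ and $m=2$ will be needed, so the induction can run on the present lemma itself (a statement about torus links) rather than on a stronger statement about all the intermediate diagrams $D^{l}_{2k-1,(2k-1)n}$.

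The third step is the induction. For $m=2$ the right-hand link is $D^{0}_{2k-1,(2k-1)n}=T_{2k-1,(2k-1)n}$, and the inductive hypothesis (the lemma for $k-1$) gives $\KH^{P',\,6(k-1)kn+1-2i'}(T_{2k-1,(2k-1)n})\neq 0$ for $i'=0,\dots,k$; substituting $j-12kn=6(k-1)kn+1-2i'$ yields $j=6k(k+1)n+1-2i'$, covering the targets for $i=0,\dots,k$. The remaining grading $i=k+1$ comes from $m=1$: since $\KH^{P',*}(T_{2k-1,(2k-1)n}\sqcup U)=\KH^{P',*-1}(T_{2k-1,(2k-1)n})\oplus\KH^{P',*+1}(T_{2k-1,(2k-1)n})$, the nonvanishing of $\KH^{P',\,6(k-1)kn+1-2k}(T_{2k-1,(2k-1)n})$ (the $i'=k$ case) forces $\KH^{P',\,j-12kn+1}(T_{2k-1,(2k-1)n}\sqcup U)\neq 0$ exactly at $j=6k(k+1)n-2k-1=6k(k+1)n+1-2(k+1)$. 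The base case $k=1$ is handled the same way: there $\overline{E^{2}_{3,3n}}$ and $\overline{E^{1}_{3,3n}}$ are the unknot and a two-component unlink, so the formula reduces to the elementary $\KH(U)$ and $\KH(U\sqcup U)$ and produces nonvanishing at $j=12n+1,\,12n-1,\,12n-3$.

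The hard part will be the grading bookkeeping in the second step: one must track simultaneously the outer normalization shift $j\mapsto j-2k(2k+1)n$ relating $\KH(T_{2k+1,(2k+1)n})$ to the unnormalized $H(D_{2k+1,(2k+1)n})$, and the inner shift coming from $n_{\pm}(\overline{E^{m}})$, and verify that the two large $q$-degree contributions cancel to leave the clean value $j-12kn-m+2$. A secondary subtlety is the extremal grading $i=k+1$, which is invisible to the torus-link summand $m=2$ and is produced only by the extra tensor factor $V$ coming from the split unknot in $\overline{E^{1}}$; I would have to check that its parity and placement match the claimed grading exactly.
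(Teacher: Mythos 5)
Your proposal is correct and follows essentially the same route as the paper: the graded splitting of the long exact sequences at top homological degree extracted from the proof of Lemma~\ref{mainthm2} (via $(\ref{g})$, $(\ref{f})$ and Lemma~\ref{lem1}), the identification of the closures of $E^{1}_{2k+1,(2k+1)n}$ and $E^{2}_{2k+1,(2k+1)n}$ with those of $D_{2k-1,(2k-1)n}\sqcup\bigcirc$ and $D_{2k-1,(2k-1)n}$, and an induction on $k$ in which the $m=2$ summand supplies the gradings $i=0,\dots,k$ while the extra circle in the $m=1$ summand supplies $i=k+1$. Your grading bookkeeping agrees with the paper's computation.
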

\begin{proof}
To prove this lemma, we use induction on $k$. \par
For $k=1$, it has already known that $\KH^{4n, 12n+1}(T_{3, 3n})$, $\KH^{4n, 12n-1}(T_{3, 3n})$ and $\KH^{4n, 12n-3}(T_{3, 3n})$ are not zero (see \cite[Theorem~$8$]{stosic2} or \cite[Theorem~$3.1$]{turner}). 
\par
Suppose that Lemma~$\ref{non-trivial1}$ is true for $1, \dots, k-1$, that is, suppose that for $1\leq h<k$, $n>0$ and $i=0, \dots, h+1$, we have 
\begin{align}
\KH^{2h(h+1)n, 6h(h+1)n+1-2i}(T_{2h+1, (2h+1)n})\neq 0. \label{nontri_suppose}
\end{align}
From the proof of Lemma~$\ref{mainthm2}$ (, recall that the inequality $(\ref{add1})$ is in fact an equality), we obtain 
\begin{align}
\dim_{\mathbf{Q}} H^{2k(k+1)n, j}(D_{2k+1, (2k+1)n})  
&=\sum_{m=1}^{2k}\dim_{\mathbf{Q}} H^{2k(k+1)n-1, j-1}(E_{2k+1, (2k+1)n}^{m}) \label{add4} \\
&\geq \dim_{\mathbf{Q}} H^{2k(k+1)n-1, j-1}(E_{2k+1, (2k+1)n}^{1})\nonumber \\
&\ \ \ +\dim_{\mathbf{Q}} H^{2k(k+1)n-1, j-1}(E_{2k+1, (2k+1)n}^{2})\nonumber. 
\end{align}
Note that the closure of $E_{2k+1, (2k+1)n}^{2}$ is equivalent to the closure of $D_{2k-1, (2k-1)n}$ (see Figure~$\ref{orientation1}$). 
We give the closure of $E^{2}_{2k+1, (2k+1)n}$ an orientation such that all crossings of the closure of $D_{2k-1, (2k-1)n}$ are positive. 
Then we can check that the closure of $E_{2k+1, (2k+1)n}^{2}$ has $4kn-1$ negative crossings and $2k(2k-1)n-1$ positive crossings. 
Similarly, the closure of $E_{2k+1, (2k+1)n}^{1}$ is equivalent to the closure of $D_{2k-1, (2k-1)n}\sqcup\bigcirc $, where $\bigcirc $ is a circle in the plane (see Figure~$\ref{orientation2}$). 
We give the closure of $E_{2k+1, (2k+1)n}^{1}$ an orientation such that all crossings of the closure of $D_{2k-1, (2k-1)n}\sqcup\bigcirc $ are positive. 
We can check that the closure of $E_{2k+1, (2k+1)n}^{1}$ has $4kn-1$ negative crossings and $2k(2k-1)n$ positive crossings. 
From $(\ref{add4})$, we have 
\begin{align*}
&\dim_{\mathbf{Q}} \KH^{2k(k+1)n, 6k(k+1)n+1-2i}(D_{2k+1, (2k+1)n})\\
&\ \ \ \ \geq\dim_{\mathbf{Q}} \KH^{2k(k-1)n, 6k(k-1)n+2-2i}(D_{2k-1, (2k-1)n}\sqcup\bigcirc )\\
&\ \ \ \ \ \ \ \ +\dim_{\mathbf{Q}} \KH^{2k(k-1)n, 6k(k-1)n+1-2i}(D_{2k-1, (2k-1)n}). 
\end{align*}
By the induction hypothesis $(\ref{nontri_suppose})$, the first term of the last expression is not zero for $i=1, \dots, k+1$ and the second term is not zero for $i=0, \dots, k$. 
\end{proof}
From Lemma~$\ref{non-trivial1}$, we obtain the following. 
\begin{cor}\label{cor_torus_thick}
The homological thickness $\operatorname{hw}(T_{2k+1, (2k+1)n})$ of the $(2k+1$, $(2k+1)n)$-torus link is greater than or equal to $k^{2}n+2$. 
\end{cor}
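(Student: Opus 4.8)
The plan is to bound $\operatorname{hw}$ from below by exhibiting two nonzero Khovanov homology groups of $T_{2k+1,(2k+1)n}$ whose diagonal gradings $\delta := j-2i$ differ by exactly $2k^{2}n+2$. Since $\operatorname{hw}$ equals $\tfrac{1}{2}(\max\delta-\min\delta)+1$ and both classes are realized, this immediately yields $\operatorname{hw}\geq k^{2}n+2$, with no need to verify that the two classes are the global extremes.

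For the class of small $\delta$, I would take the top homological degree class furthest down the $q$-grading supplied by Lemma~$\ref{non-trivial1}$. Setting $i=k+1$ there gives $\KH^{2k(k+1)n,\,6k(k+1)n+1-2(k+1)}(T_{2k+1,(2k+1)n})\neq 0$, whose diagonal grading is $\delta_{-}=6k(k+1)n+1-2(k+1)-2\cdot 2k(k+1)n=2k^{2}n+2kn-2k-1$.

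For the class of large $\delta$, I would work at homological degree $0$ using the positive braid diagram $D=(\sigma_{1}\cdots\sigma_{2k})^{(2k+1)n}$. Its oriented (Seifert) resolution consists of $2k+1$ circles, so $C^{0}(D)=V^{\otimes(2k+1)}$ and $H^{0}(D)=\kernel d^{0}$. I would analyze $\kernel d^{0}$ in the second-lowest internal $q$-grading $-(2k+1)+2$, spanned by the vectors $e_{p}$ having a single tensor factor equal to $1$ (in slot $p$) and all others equal to $X$. A crossing of type $\sigma_{i}$ induces a merge of the adjacent Seifert circles $i,i+1$, sending $v=\sum_{p}a_{p}e_{p}$ to $(a_{i}+a_{i+1})$ times the all-$X$ state of the merged diagram; distinct crossings land in distinct summands of $C^{1}(D)$, so $v\in\kernel d^{0}$ iff $a_{i}+a_{i+1}=0$ for every $i=1,\dots,2k$. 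These constraints force $a_{p+1}=-a_{p}$, leaving the one-dimensional solution $\sum_{p}(-1)^{p-1}e_{p}$, so $H^{0,-(2k+1)+2}(D)\neq 0$. After the normalization shift by $n_{+}=2k(2k+1)n$, this gives $\KH^{0,\,-(2k+1)+2+n_{+}}\neq 0$, whose diagonal grading is $\delta_{+}=-(2k-1)+2k(2k+1)n=4k^{2}n+2kn-2k+1$.

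Finally $\delta_{+}-\delta_{-}=2k^{2}n+2$, so $\max\delta-\min\delta\geq 2k^{2}n+2$ and therefore $\operatorname{hw}(T_{2k+1,(2k+1)n})\geq\tfrac{1}{2}(2k^{2}n+2)+1=k^{2}n+2$. The main obstacle is the homological-degree-$0$ computation: one must pass to the \emph{second}-lowest $q$-grading rather than the lowest, where $X^{\otimes(2k+1)}$ trivially survives but produces a diagonal two units too small, giving only $k^{2}n+1$. Realizing the sharp constant $k^{2}n+2$ hinges precisely on showing that this next grading still carries a class, for which the key point is that every generator $\sigma_{i}$ appears in the braid so that all the constraints $a_{i}+a_{i+1}=0$ are present, together with careful bookkeeping of the normalization shift.
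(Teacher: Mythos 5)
Your proof is correct and follows the same skeleton as the paper's: both arguments exhibit the same two nonvanishing groups, namely $\KH^{2k(k+1)n,\,6k(k+1)n+1-2(k+1)}\neq 0$ from Lemma~\ref{non-trivial1} with $i=k+1$, and $\KH^{0,\,2k((2k+1)n-1)+1}\neq 0$, and then compute the same difference of diagonal gradings $2k^{2}n+2$. The only divergence is in how the homological-degree-$0$ class is obtained: the paper simply invokes Khovanov's result on positive links (Theorem~\ref{khovanov2}, i.e.\ \cite[Proposition~6.1]{khovanov2}) applied to the positive diagram $D_{2k+1,(2k+1)n}$ with $s_{0}=2k+1$ Seifert circles and $c=2k(2k+1)n$ crossings, whereas you reprove the needed special case by hand, computing $\kernel d^{0}$ on the span of the vectors $e_{p}$ in internal degree $-(2k+1)+2$ and using the fact that every generator $\sigma_{i}$ occurs in the braid word to cut the solution space down to the one-dimensional span of $\sum_{p}(-1)^{p-1}e_{p}$. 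That computation is correct (it is essentially Khovanov's argument specialized to a positive braid closure, where each $1$-smoothing merges two adjacent Seifert circles and lands in its own summand of $C^{1}$), so your version is self-contained where the paper's is a citation; the trade-off is only that the cited theorem covers arbitrary positive diagrams while your direct argument uses the clean braid structure. Your closing remark correctly identifies why the second-lowest $q$-grading, rather than the all-$X$ class, is what produces the sharp constant.
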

\begin{proof}
From Lemma~$\ref{non-trivial1}$, we have 
\begin{align*}
\KH^{2k(k+1)n, 6k(k+1)n+1-2(k+1)}(T_{2k+1, (2k+1)n})\neq 0. 
\end{align*}
In \cite{khovanov2}, Khovanov determines the homological degree $0$ term of the Khovanov homology of a positive link (see Theorem~$\ref{khovanov2}$ below). 
Note that, in \cite{khovanov2}, he denotes $\KH^{i, -j}$ by $\mathcal{H}^{i, j}$. \par
The closure of $D_{2k+1, (2k+1)n}$ is a positive diagram of $T_{2k+1, (2k+1)n}$. 
The number of its Seifert circles is $2k+1$ and the number of its crossings is $2k(2k+1)n$. 
From Theorem~$\ref{khovanov2}$, we have 
\begin{align*}
\KH^{0, 2k((2k+1)n-1)+1}(T_{2k+1, (2k+1)n})\neq 0. 
\end{align*}
Hence, by the definition of the homological thickness (cf. Corollary~$\ref{torus_thick}$), we obtain 
\begin{align*}
\operatorname{hw}(T_{2k+1, (2k+1)n})&\geq \frac{1}{2}(2k((2k+1)n-1)+1-2kn(k+1)-1+2(k+1))+1\\
&=k^{2}n+2. 
\end{align*}
\end{proof}
\begin{rem}
From Corollary~$\ref{cor_torus_thick}$ and Theorem~$\ref{dalt_hw}$, the $(2k+1, (2k+1)n)$-torus link has no diagram which is alternating after $k^{2}n-1$ or less crossing changes. 
\end{rem}
\begin{thm}[{\cite[Proposition~$6.1$]{khovanov2}}]\label{khovanov2}
Let $L$ be a positive link. 
Then $\KH^{i}(L)=0$ if $i<0$, 
\begin{align*}
\KH^{0, j}(L)=
\begin{cases}
\mathbf{Q} & \text{if\ } j=-s_{0}(D)+c+1\pm 1,\\
0& \text{otherwise}, 
\end{cases}
\end{align*}
and $\KH^{i, j}(L)=0$ if $i>0$ and $j<c-s_{0}(D)$, where $s_{0}(D)$ is the number of the Seifert circles and $c$ is the number of the crossings in a positive diagram $D$ of $L$. 
\end{thm}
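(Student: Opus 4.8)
The plan is to compute directly with the unnormalized complex $C^{\bullet}(D)$ of a positive diagram $D$ of $L$, exploiting that $D$ has no negative crossings. First I would record the normalization: since $n_{+}=c$ and $n_{-}=0$, the definition gives $\KH^{i,j}(L)=H^{i,\,j-c}(D)$, and $C^{\bullet}(D)$ is supported in homological degrees $0\le i\le c$. The vanishing $\KH^{i}(L)=0$ for $i<0$ is then immediate, as $C^{i}(D)=0$ for $i<0$.

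Next I would prove the $q$-degree bound underlying the third assertion. The all-$0$ resolution is the Seifert state, with $s_{0}(D)$ circles. Any resolution $D_{\varepsilon}$ is obtained from it by turning $|\varepsilon|$ crossings into $1$-smoothings, and each such toggle changes the number of circles by $\pm 1$; hence $k_{\varepsilon}\le s_{0}(D)+|\varepsilon|$. Therefore the minimal $q$-grading occurring in $C^{|\varepsilon|}(D)$ is at least $-(s_{0}(D)+|\varepsilon|)+|\varepsilon|=-s_{0}(D)$, so $H^{i,j'}(D)=0$ for $j'<-s_{0}(D)$. Translating through the shift, $\KH^{i,j}(L)=0$ whenever $j<c-s_{0}(D)$ for every $i$, which gives the third claim and the lower end of the second.

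The heart is the computation $H^{0}(D)=\kernel d^{0}$, with $d^{0}\colon V^{\otimes s_{0}(D)}\to C^{1}(D)$. I would first reduce to the case that $D$ is connected (splitting off distant components by a Künneth argument). The key structural lemma I would establish is that the Seifert graph $\Gamma$ (vertices the Seifert circles, edges the crossings) is connected, loopless, and \emph{bipartite}: two circles meeting at a crossing differ in nesting depth by exactly one, so the parity of the nesting depth two-colours $\Gamma$; in particular every crossing joins two distinct circles, so no $1$-smoothing is a splitting. Since distinct crossings feed distinct summands of $C^{1}(D)$, one has $\kernel d^{0}=\bigcap_{\text{edges}}\kernel d_{c}$ with each $d_{c}$ a merge map $m$. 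In the bottom degree, $C^{0,-s_{0}}(D)=\langle X^{\otimes s_{0}}\rangle$ lies in the kernel because $m(X\otimes X)=0$ and there are no splittings, so $H^{0,-s_{0}}(D)=\mathbf{Q}$. In degree $-s_{0}+2$, writing a chain as $\sum_{a}\lambda_{a}e_{a}$, where $e_{a}$ labels circle $a$ by $1$ and all others by $X$, the merge at an edge $(a,b)$ sends it to $(\lambda_{a}+\lambda_{b})X^{\otimes(s_{0}-1)}$; hence the kernel is $\{\lambda:\lambda_{a}+\lambda_{b}=0\text{ for every edge}\}$, which by connectedness and bipartiteness of $\Gamma$ is exactly one-dimensional, giving $H^{0,-s_{0}+2}(D)=\mathbf{Q}$.

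Finally I would show $\kernel d^{0}=0$ in all gradings $\ge -s_{0}+4$, so that $H^{0}(D)\cong V\{-s_{0}+1\}$; after the normalizing shift this reads $\KH^{0,j}(L)=\mathbf{Q}$ for $j=-s_{0}(D)+c+1\pm 1$ and $0$ otherwise, completing the second claim. For such a grading each edge relation forces the $1$-labelled circles of every state to form an independent set of $\Gamma$ (the coefficient of any configuration with two adjacent $1$'s must vanish), together with antisymmetry relations of the form $w_{1X}=-w_{X1}$, and a combinatorial induction on $\Gamma$ using connectedness rules out nonzero common solutions. I expect this last vanishing to be the main obstacle: in the two bottom degrees the linear algebra collapses to a single merge equation per edge, whereas in the higher gradings the independent-set constraint and the antisymmetry relations are coupled across all edges at once, and arranging the induction on the Seifert graph so that it is clean and manifestly independent of the chosen positive diagram is the delicate point.
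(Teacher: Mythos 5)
A preliminary remark: the paper does not prove this statement at all — it is quoted from Khovanov's Proposition~6.1 in \cite{khovanov2} and used as a black box — so there is no in-paper argument to compare yours against. Judged on its own, your outline is the right direct computation, and the step you single out as delicate (the vanishing of $\kernel d^{0}$ in $q$-degrees $\geq -s_{0}+4$) does go through: for a state $e_{S}$ with $|S|=t\geq 2$, pick $a\in S$, set $T=S\setminus\{a\}$, pick $c\in T$, walk along a path $a=v_{0},v_{1},\dots$ to $c$ in $\Gamma$, and propagate $w_{T\cup\{v_{i}\}}=-w_{T\cup\{v_{i+1}\}}$ until the first vertex of the path lying in $T$ is reached, at which point the independent-set relation kills the coefficient. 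Connectedness of $\Gamma$ is all that is needed there, and your identification of the edge relations is correct.

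The one genuinely broken step is your justification of bipartiteness of the Seifert graph. It is not true that two Seifert circles meeting at a crossing differ in nesting depth by exactly one: the smoothed crossing band lies in a single complementary region $R$ of the union of the Seifert circles, and the two circles it joins are either the outer boundary circle of $R$ and an inner one (depths differing by one) or two inner boundary circles of $R$ (equal depth) — the one-crossing figure-eight diagram of the unknot already realises the second case with two side-by-side circles of depth zero. So parity of depth does not two-colour $\Gamma$. Bipartiteness (and looplessness) of the Seifert graph is nevertheless true — this is Cromwell's standard lemma — but the correct colouring must combine nesting depth with the orientation of each circle, since coherently oriented adjacent arcs force nested neighbours to be equally oriented and un-nested neighbours to be oppositely oriented. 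This is not cosmetic: bipartiteness is exactly what makes the solution space of $\{\lambda_{a}+\lambda_{b}=0\}$ one-dimensional rather than zero, i.e.\ it is what produces $\KH^{0,c-s_{0}+2}(L)=\mathbf{Q}$. Separately, your K\"unneth ``reduction to the connected case'' does not reduce anything; it shows instead that the displayed formula for $\KH^{0,j}$ is false for split positive links (already for the two-component unlink, where $H^{0}=V^{\otimes 2}$ has a two-dimensional middle degree). The statement implicitly assumes the positive diagram $D$ is connected — which is how the paper applies it, to torus and cable links — and you should impose that hypothesis outright rather than invoke K\"unneth.
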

\section{The maximal degree of the Khovanov homology of a cable link}\label{main2}
In this section, we prove Theorem~$\ref{newmainthm2}$ and Proposition~$\ref{newmainprop}$. 
Recall that Theorem~$\ref{newmainthm2}$ has three claims. 
These claims follow from Lemmas~$\ref{mainthm3}$, $\ref{mainthm4}$ and $\ref{non-trivial2}$ below, which are the first, second and third claims of Theorem~$\ref{newmainthm2}$, respectively. 
Hence, Theorem~$\ref{newmainthm2}$ immediately follows from these lemmas. 
Lemma~$\ref{mainthm3}$ also implies Proposition~$\ref{newmainprop}$. 
To prove these lemmas, we define some notations. 
\par
\begin{defn}
Let $K$ be an oriented knot and $D$ be a knot diagram of $K$ with writhe $f$. 
Denote the $(p, pn)$-cabling of the knot $K$ by $K(p, pn)$. 
Assume that each component of $K(p, pn)$ has an orientation induced by $K$, that is, each component of $K(p, pn)$ is homologous to $K$ in the tubular neighborhood of $K$. 
Let $D(p, q+pf)$ be the diagram depicted in Figure~$\ref{cable}$. The diagram $D(p, q+pf)$ is a diagram of the $(p, q+pf)$-cabling $K(p, q+pf)$ of $K$ (see Figure~$\ref{ex}$). 
Let $D^{m}(p, q+pf)$ and $E^{m}(p, q+pf)$ be the diagrams depicted in Figure~$\ref{E(mpq)}$. 
\par
\begin{figure}[!h]
\begin{center}
\includegraphics[scale=0.3]{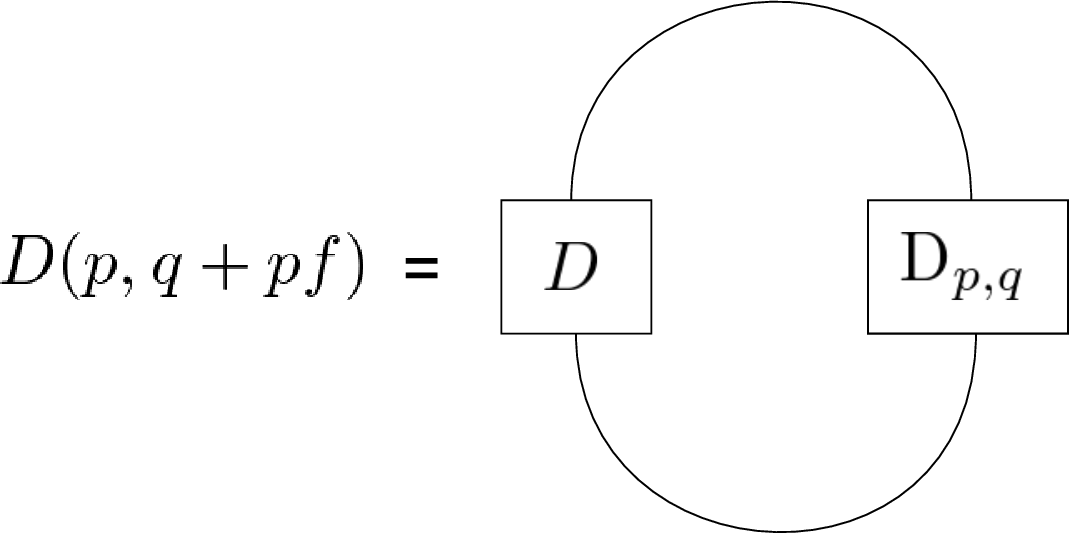}
\end{center}
\caption{The diagram $D(p, q+pf)$ is obtained from $p$-parallel of $D$ by adding $D_{p, q}$, where $f$ is the writhe of $D$.  The diagram $D(p, q+pf)$ is a diagram of the $(p, q+pf)$-cabling of $K$.}
\label{cable}
\end{figure}
\begin{figure}[!h]
\begin{center}
\includegraphics[scale=0.3]{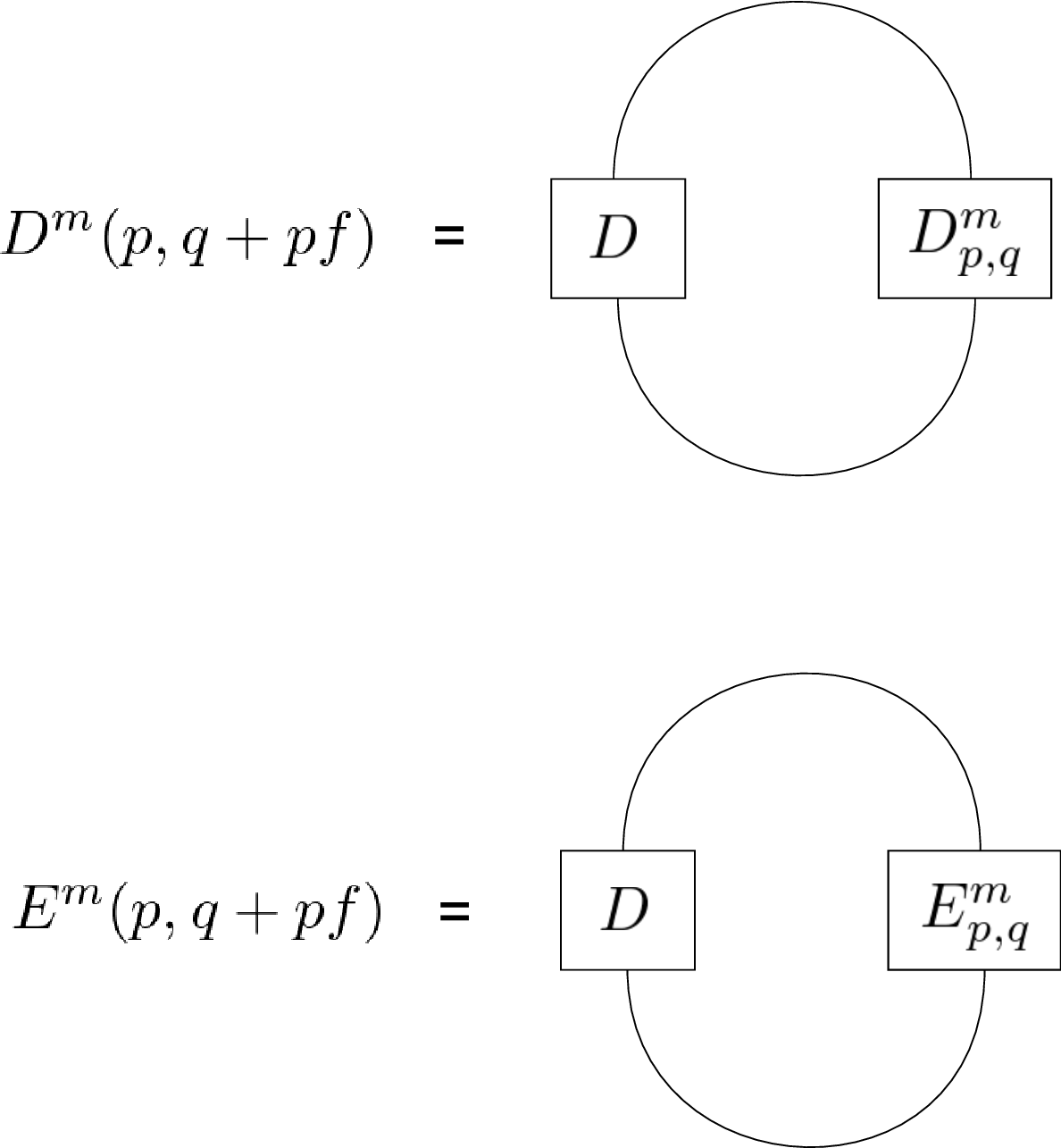}
\end{center}
\caption{The diagram $D^{m}(p, q+pf)$ and $E^{m}(p, q+pf)$. }
\label{E(mpq)}
\end{figure}
\begin{figure}[!h]
\begin{center}
\includegraphics[scale=0.35]{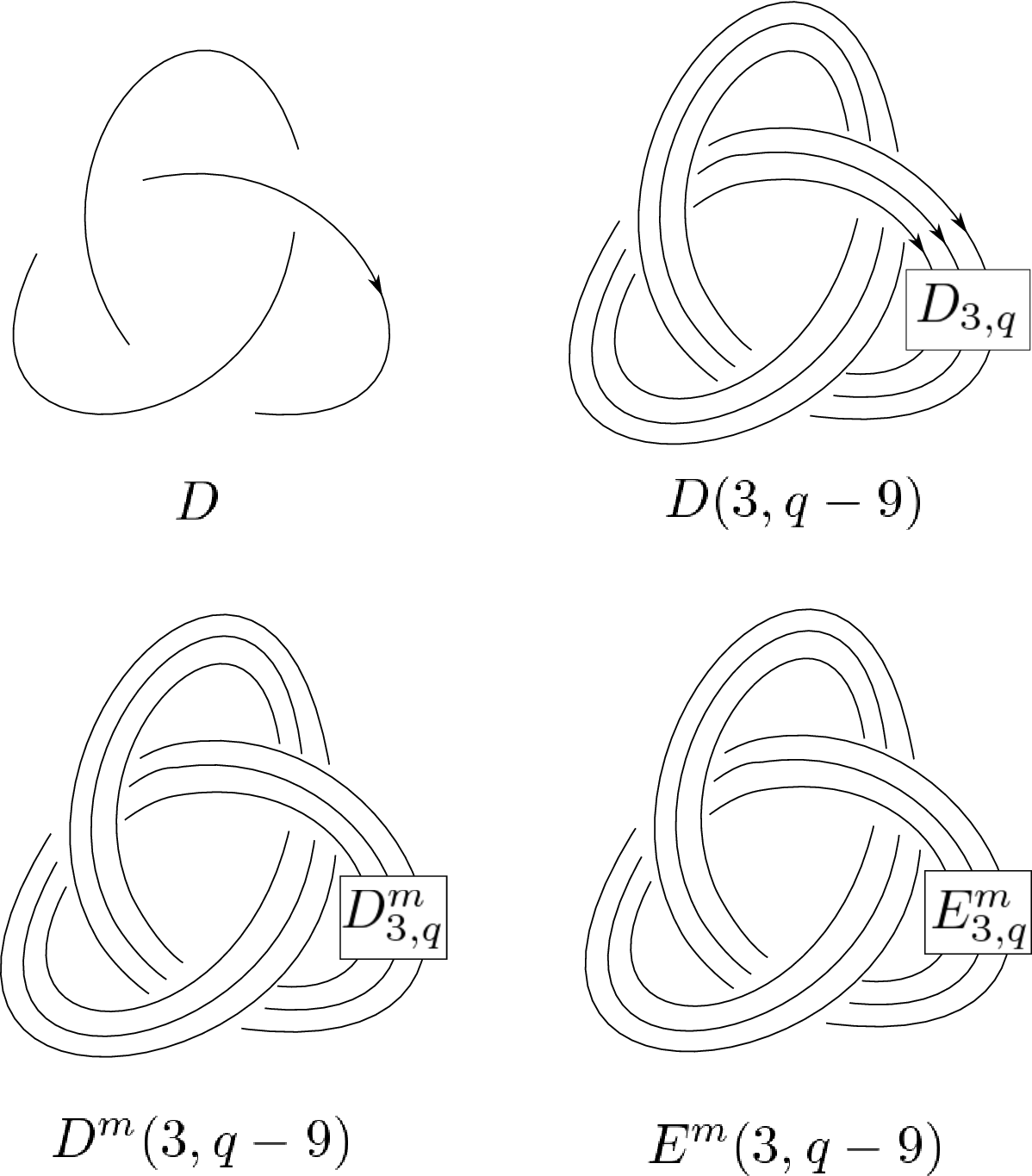}
\end{center}
\caption{Examples of $D(p, q)$, $D^{m}(p, q)$ and $E^{m}(p, q)$. }
\label{ex}
\end{figure}
\end{defn}
We first prove Lemma~$\ref{mainthm3}$, which implies Corollaries~$\ref{i_max1}$ and $\ref{i_max}$. 
\begin{lem}\label{mainthm3}
Let $K$ be an oriented knot and $D$ be a diagram of $K$ with $l_{+}$ positive crossings and $l_{-}$ negative crossings. 
Put $l=l_{+}+l_{-}$ and $f=l_{+}-l_{-}$. 
Then, for $n\geq l $ and any positive integer $k$, we have the following: 
\begin{align*}
\max\{i\in\mathbf{Z}|\KH^{i}(K(2k, 2k(n+f)))\neq 0\}=2k^{2}(n+f)
\end{align*}
and
\begin{align*}
2k(k+1)(n+f)&\leq \max\{i\in\mathbf{Z}|\KH^{i}(K(2k+1, (2k+1)(n+f)))\neq 0\}\\
&\leq 2k(k+1)(n+f)+l_{+}. 
\end{align*}
\end{lem}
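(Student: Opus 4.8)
The plan is to prove the two displayed estimates separately: the lower bounds will come from Lee homology via the spectral sequence of Theorem~\ref{spect}, and the upper bounds from an inductive argument on the long exact sequence of Theorem~\ref{viro}, following Sto{\v s}i{\'c}'s reduction for torus links but now carrying the companion diagram $D$ along.

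First I would establish the lower bounds $2k^{2}(n+f)$ and $2k(k+1)(n+f)$. Since each page of the Lee spectral sequence is bigraded and its differential raises the homological degree $i$ by one, the term $E_{\infty}^{i,j}$ is a subquotient of $E_{2}^{i,j}=\KH^{i,j}$; summing over $j$ gives $\dim_{\mathbf{Q}}\operatorname{Lee}^{i}(L)\leq\dim_{\mathbf{Q}}\KH^{i}(L)$, so $\operatorname{Lee}^{i}(L)\neq 0$ forces $\KH^{i}(L)\neq 0$. The link $K(p,p(n+f))$ has $p$ components (here $p=2k$ or $2k+1$), and because each component is homologous to $K$ while the framing contributes the writhe, every pairwise linking number equals $n+f$. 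Proposition~\ref{4.3} then shows that $\dim_{\mathbf{Q}}\operatorname{Lee}^{i}$ is controlled by the subsets $E\subset\{2,\dots,p\}$ with $2(n+f)\,|E|(p-|E|)=i$, so the top nonvanishing degree is $2(n+f)\max_{e}e(p-e)$. This maximum is $k^{2}$ for $p=2k$ (attained at $e=k$) and $k(k+1)$ for $p=2k+1$ (attained at $e=k,k+1$), giving exactly the two lower bounds.

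For the upper bounds I would iterate the long exact sequence on the crossings of the added torus part $D_{p,q}$, using the diagrams $D^{m}(p,q+pf)$ and $E^{m}(p,q+pf)$ of Figure~\ref{E(mpq)} obtained by $0$- and $1$-smoothing the top crossings. Theorem~\ref{viro} supplies the sequences
\begin{multline*}
\cdots\to H^{i-1,j-1}(E^{m}(p,q+pf))\to H^{i,j}(D^{m-1}(p,q+pf))\\
\to H^{i,j}(D^{m}(p,q+pf))\to\cdots,
\end{multline*}
from which the top nonvanishing degree of $D^{m-1}$ is at most the maximum of that of $D^{m}$ and one more than that of $E^{m}$. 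The diagrams $E^{m}(p,q+pf)$ are cables of $K$ on $p-2$ strands, while the last diagram $D^{p-1}(p,q+pf)$ lowers the torus exponent $q$ by one, so $p$ such passes remove one full twist; this sets up a double induction reducing the number of strands $p$ and the number of twists $n$. The even torus base case is provided by Theorem~\ref{thm1} (the whole torus situation being the specialization $K=$ unknot, $l=f=0$), and the hypothesis $n\geq l$ is what keeps the intermediate cables positively twisted enough for the degree estimates to survive the reduction. Tracking the shifts and normalizing by the number $n_{-}$ of negative crossings then yields the sharp value $2k^{2}(n+f)$ for even $p$ and the estimate $2k(k+1)(n+f)\leq\cdots\leq 2k(k+1)(n+f)+l_{+}$ for odd $p$.

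The main obstacle is the homological-degree bookkeeping through these exact sequences combined with the normalization shift by $n_{-}$: passing to the $p$-parallel inserts $l$ extra crossings (of which $l_{-}$ are negative) for each crossing of $D$, and each smoothing can a priori raise the top degree by one, so the real content is showing that these contributions are absorbed into the writhe shift $f$ rather than accumulating. This is precisely why the even case closes up to the exact value while the odd case retains the slack $+l_{+}$: lacking an exact odd-torus computation analogous to Theorem~\ref{thm2}, the positive companion crossings cannot be fully cancelled, and the condition $n\geq l$ is exactly what guarantees a nonnegative effective twisting throughout the argument.
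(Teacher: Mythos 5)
Your overall strategy coincides with the paper's: the lower bounds come from Proposition~\ref{4.3} together with the Lee spectral sequence (and your computation of the pairwise linking numbers $n+f$ and of $\max_{e}e(p-e)$ is complete and correct), and the upper bounds are to come from iterating the long exact sequence of Theorem~\ref{viro} over the crossings of the torus part, reducing both the amount of twisting and the number of strands. This is exactly the structure of Lemma~\ref{lem_cable} in the paper.

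However, the upper-bound half of your argument has a genuine gap at the point where the paper does most of its work. From the exact sequence you only get $H^{i}(D^{m-1})\cong H^{i}(D^{m})$ in the range where $H^{i}(E^{m})$ and $H^{i-1}(E^{m})$ both vanish, so everything hinges on a vanishing statement for the \emph{unnormalized} homology of each $E^{m}(p,p(n+f)+j)$ in degrees just below the target threshold. You correctly identify $E^{m}$ as a cable on $p-2$ strands (possibly with a split unknotted circle), but to convert the inductive bound on its normalized Khovanov homology into a bound on $H^{i}(E^{m})$ you must count the negative crossings $n_{-}$ of $E^{m}$ with respect to an orientation making the reduced diagram positive, and then verify inequalities of the shape $l(2k)^{2}+2k^{2}(n-l+1)-1\geq 2(k-x)^{2}(n-l+1)+l_{+}(2k-2x)^{2}+n_{-}$ for every $m$, $j$, and every way the smoothed cap can be slid off. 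This count depends delicately on the combinatorics of the diagram (the paper distinguishes four types of $E^{m}$ and splits each into four parts; see Claims~\ref{key_claim} and \ref{key_claim2} in the Appendix), and it is precisely here that the hypothesis $n\geq l$ and the odd-case slack $+l_{+}$ are actually produced. Asserting that the extra companion crossings ``are absorbed into the writhe shift $f$'' is the statement to be proved, not a consequence of the setup, so as written the upper bounds are not established. A smaller inaccuracy: the base of the twist-reduction is not Theorem~\ref{thm1} but the diagram $D(p,pf)$, whose unnormalized homology vanishes above its crossing number $lp^{2}$ for trivial reasons.
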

We use Lemma~$\ref{lem_cable}$ below to prove Lemma~$\ref{mainthm3}$. 
Lemma~$\ref{lem_cable}$ gives upper bounds of $\max\{i\in\mathbf{Z}|\KH^{i}(K(p, p(n+f)))\neq 0\}$. 
\begin{lem}\label{lem_cable}
Let $k$ be a positive integer and $n\geq 0$. 
\begin{enumerate}
\item If $i>2k^{2}(n-l+1)+l(2k)^{2}$ and $n\geq l$, or $i>l(2k)^{2}$ and $n<l$, then we have $H^{i}(D^{m}(2k, 2k(n+f)+j))=0$ for any $j=1, \dots, 2k$ and $m=0, \dots, 2k-1$.
\item If $i>2k(k+1)(n-l+1)+l(2k+1)^{2}$ and $n\geq l$, or $i>l(2k+1)^{2}$ and $n<l$, then we have $H^{i}(D^{m}(2k+1, (2k+1)(n+f)+j)=0$ for any $j=1, \dots, 2k+1$ and $m=0, \dots, 2k$.
\end{enumerate}
\end{lem}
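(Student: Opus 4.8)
The plan is to adapt Sto{\v s}i{\'c}'s inductive scheme from the proofs of Lemmas~\ref{mainthm2} and \ref{non-trivial1}, now carrying the companion $K$ along through a fixed diagram $D$. The engine is Viro's long exact sequence (Theorem~\ref{viro}), applied at the crossing that turns $D^{m}(p,Q)$ into $D^{m+1}(p,Q)$ by $0$-smoothing and into $E^{m+1}(p,Q)$ by $1$-smoothing:
\begin{align*}
\cdots\to H^{i-1,j-1}(E^{m+1}(p,Q))\to H^{i,j}(D^{m}(p,Q))\to H^{i,j}(D^{m+1}(p,Q))\to\cdots.
\end{align*}
By exactness, $H^{i}(D^{m}(p,Q))=0$ as soon as both $H^{i}(D^{m+1}(p,Q))=0$ and $H^{i-1}(E^{m+1}(p,Q))=0$, so the whole statement reduces to controlling the two simpler families $D^{m+1}$ and $E^{m+1}$. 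The long exact sequence only ever supplies this easy (vanishing) direction of the inequality, so no homological input beyond Theorems~\ref{viro} and \ref{thm1} is required.

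I would organise the argument as a nested induction. The outer induction is on $k$: after an isotopy and a choice of orientation the diagram $E^{m+1}(p,Q)$ becomes a cable of $K$ on $p-2$ strands (in the relevant cases together with a split unknotted circle), exactly as the closure of $E^{i}_{2k+1,(2k+1)n}$ reduced to that of $D^{i-2}_{2k-1,(2k-1)n}$ in Lemma~\ref{mainthm2}; its unnormalised homology is therefore governed by the already-established bound for $(p-2)$-strand cables. The inner induction runs through the torus exponent: the downward pass $m=p-1,\dots,0$ of the sequence above terminates at $D^{p-1}(p,Q)=D(p,Q-1)$, which lowers $Q$ by one, and iterating these passes strips the torus braid off completely and leaves the $p$-parallel of $D$. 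That parallel diagram has at most $lp^{2}$ crossings, so $H^{i}$ of it vanishes above $lp^{2}$; this supplies the base value $l(2k)^{2}$ (respectively $l(2k+1)^{2}$) and accounts for the additive crossing term in both parts of the statement. The outer induction bottoms out at $k=1$ and at the crossingless companion $K=U$, where the starting estimates are supplied by Sto{\v s}i{\'c}'s torus-link computation (Theorem~\ref{thm1}) and the known Khovanov homology of the $(3,q)$-torus links.

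The main obstacle is the arithmetic bookkeeping that makes the two regimes $n\geq l$ and $n<l$ close up to exactly the stated thresholds. Each undone full twist raises the threshold by $2k^{2}$ (respectively $2k(k+1)$), the per-twist growth already familiar from the torus links (Theorem~\ref{thm1}), while each crossing-block of the parallel contributes its raw crossing count $(2k)^{2}$; the value $n=l$ is the point at which twisting begins to dominate the crossings of $D$, which is why the bound is flat for $n<l$ and grows linearly for $n\geq l$. Getting these increments to combine correctly forces one to track two separate shifts: the degree shift $(i,j)\mapsto(i-1,j-1)$ built into the long exact sequence, and the shift between unnormalised homology $H$ and the normalised $\KH$ caused by reorienting $E^{m+1}$ so that its $(p-2)$-strand torus part is positively braided. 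As in Lemma~\ref{mainthm2}, this reorientation produces a definite number of negative crossings, and only after this count is fixed can the induction hypothesis for $p-2$ strands be inserted in the correct homological degree. Aligning these shifts for both parities simultaneously, so that the inner and outer inductions feed the stated bound rather than a weaker one, is the delicate part of the proof.
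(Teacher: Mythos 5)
Your skeleton is the same as the paper's: induct on the number of strands, run Viro's long exact sequence through the chain $D^{m}\to D^{m+1}\to\cdots\to D^{0}(p,Q-1)\to\cdots$ until the torus braid is stripped off and only the $p$-parallel of $D$ (with $lp^{2}$ crossings) remains, and feed the vanishing of $H^{i-1}(E^{m+1})$ into the sequence at each step. The base cases and the terminal diagram $D(p,pf)$ are handled as you describe. So the architecture is right.

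The genuine gap is the step you compress into ``$E^{m+1}(p,Q)$ becomes a cable of $K$ on $p-2$ strands, exactly as in Lemma~\ref{mainthm2}, plus reorientation bookkeeping.'' That step is Claims~\ref{key_claim} and \ref{key_claim2}, and it is where essentially all of the work in the paper lives (the entire appendix). Two things go wrong with your description. First, the reduction target is not always a $(p-2)$-strand cable: for general $m$ and $j$ the diagram $E^{m}(2k,2k(n+f)+j)$ is equivalent to $D^{s}(2k-2x,(2k-2x)(n+f)+h)\sqcup U_{\varepsilon}$ with $x$ ranging over $\{1,\dots,k\}$, and it falls into one of four distinct isotopy types (the paper's type-$1$ through type-$4$); consequently the induction hypothesis must be carried for \emph{all} smaller strand counts and all intermediate diagrams $D^{s}$, not just for $p-2$ strands. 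Second, the ``bookkeeping'' is not a routine alignment of shifts: one must bound the number of negative crossings of $E^{m}$ from above by counting its positive crossings part by part (the paper's part-$1$ through part-$4$, with separate counts for the upward- and downward-directed arcs and for the position parameters $a,b$), and then verify inequalities such as $2(k-x)(x-b)+x(2k-j)-1\geq 0$ in each type, with a separate treatment of the boundary case $j=2k$ (respectively $j=2k+1$) where the inequality is an equality. These inequalities are exactly what makes the two regimes $n\geq l$ and $n<l$ close up to the stated thresholds; without them the long exact sequence gives no bound at all, because $H^{i-1}(E^{m+1})$ could a priori be nonzero one degree below where you need $H^{i}(D^{m})$ to vanish. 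A minor further point: your proposed base case (``the crossingless companion $K=U$ and the known homology of $(3,q)$-torus links'') is not how this lemma bottoms out --- the companion diagram $D$ is fixed throughout, and the $k=1$ base uses only that $E^{1}(2,\cdot)$ is an unknot diagram (resp.\ that $E^{m}(3,\cdot)$ is equivalent to $D$ or $D\sqcup\bigcirc$) with an explicitly counted number of negative crossings.
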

\begin{proof}[Proof of Lemma~$\ref{lem_cable}$ $(1)$]
We prove this by induction on $k$. 
For $k=1$, there is the following exact sequence: 
\begin{multline}
\cdots\to H^{i-1}(E^{1}(2, 2(n+f)+j))\to H^{i}(D(2, 2(n+f)+j)) \to\\ \to H^{i}(D(2, 2(n+f)+j-1))\to H^{i}(E^{1}(2, 2(n+f)+j))\to \cdots,  
\label{kanzenkeiretu}
\end{multline}
where $j=1, 2$ and $n\geq 0$. 
To study $H^{i}(D(2, 2(n+f)+j))$ and $H^{i}(D(2, 2(n+f)+j-1))$, we consider the diagram $E^{1}(2, 2(n+f)+j)$. 
\par
Note that for $j=1, 2$, the diagram $E^{1}(2, 2(n+f)+j)$ is a diagram of the unknot and has $2l+2n+j-1$ negative crossings. 
Hence for $i>2l+2n+j-1$ and $n\geq 0$, we have $H^{i}(E^{1}(2, 2(n+f)+j))=\KH^{i-(2l+2n+j-1)}(U)=0$. 
From the long exact sequence $(\ref{kanzenkeiretu})$, if $i>2l+2n+j$ and $n\geq 0$, then for $j=1, 2$ we obtain 
\begin{align*}
H^{i}(D(2, 2(n+f)+j))=H^{i}(D(2, 2(n+f)+j-1)). 
\end{align*}
By repeating the same process, if $i>2l+2n+j$ and $n\geq 0$, then for $j=1, 2$, we have 
\begin{align*}
H^{i}(D(2, 2(n+f)+j))&=H^{i}(D(2, 2(n+f)+j-1))\\
&=H^{i}(D(2, 2(n+f)+j-2))\\
&=\cdots =\\
&=H^{i}(D(2, 2f+1))\\
&=H^{i}(D(2, 2f)). 
\end{align*}
Since the diagram $D(2, 2f)$ has $4l$ crossings, we obtain $H^{i}(D(2, 2f))=0$ for any $i>4l$. 
Hence 
if $n\geq l$ and $i>2l+2n+j$, or $n<l$ and $i>4l$, then we obtain $H^{i}(D(2, 2(n+f)+j))=0$, where $j=1,2$. 
\par
Suppose that this lemma is true for $1, \dots, k-1$, that is, suppose that for $1\leq g<k$, $j=1, \dots, 2g$ and $m=0, \dots, 2g-1$, we have $H^{i}(D^{m}(2g, 2g(n+f)+j))=0$ if 
$i>2g^{2}(n-l+1)+l(2g)^{2}$ and $n\geq l$, or $i>l(2g)^{2}$ and $n<l$.  
\par
We will show that Lemma~$\ref{lem_cable}$ $(1)$ is true for $k$. We obtain the following exact sequence: 
\begin{multline}
\rightarrow H^{i-1}(E^{m}(2k, 2k(n+f)+j))\rightarrow H^{i}(D^{m-1}(2k, 2k(n+f)+j)) \label{kanzenkeiretu2}\\
\rightarrow H^{i}(D^{m}(2k, 2k(n+f)+j))\rightarrow H^{i}(E^{m}(2k, 2k(n+f)+j))\rightarrow ,  
\end{multline}
where $m=1, \dots, 2k-1$, $j=1, \dots, 2k$ and $n\geq 0$. 
To study $H^{i}(D^{m-1}(2k, 2k(n+f)+j))$ and $H^{i}(D^{m}(2k, 2k(n+f)+j))$, we use the following claim. 
\begin{claim}\label{key_claim}
Under the induction hypothesis in the proof of Lemma~$\ref{lem_cable}$ $(1)$, if $i>2k^{2}(n-l+1)+l(2k)^{2}-1$ and $n\geq l$, or $i>l(2k)^{2}-1$ and $n<l$, then 
we have $H^{i}(E^{m}(2k, 2k(n+f)+j))=0$ for any $j=1, \dots, 2k$ and $m=1, \dots, 2k-1$. 
\end{claim}
We will give a proof of Claim~$\ref{key_claim}$ in Section~$\ref{appendix}$. 
\par
From Claim~$\ref{key_claim}$ and the exact sequence $(\ref{kanzenkeiretu2})$, if $i>2k^{2}(n-l+1)+l(2k)^{2}$ and $n\geq l$, or $i>l(2k)^{2}$ and $n<l$, we have 
\begin{align*}
H^{i}(D^{m-1}(2k, 2k(n+f)+j))=H^{i}(D^{m}(2k, 2k(n+f)+j)) 
\end{align*}
for $m=1, \dots, 2k-1$ and $j=1, \dots, 2k$. 
\par
By repeating this process, if $i>2k^{2}(n-l+1)+l(2k)^{2}$ and $n\geq l$, or $i>l(2k)^{2}$ and $n<l$, for $m=0, \dots, 2k-1$ and $j=1, \dots, 2k$, we have 
\begin{align*}
H^{i}(D^{m}(2k, 2k(n+f)+j))&=H^{i}(D^{m+1}(2k, 2k(n+f)+j))\\
&=\cdots =\\
&=H^{i}(D^{2k-1}(2k, 2k(n+f)+j))\\
&=H^{i}(D^{0}(2k, 2k(n+f)+j-1))\\
&=H^{i}(D^{1}(2k, 2k(n+f)+j-1))\\
&=\cdots =\\
&=H^{i}(D^{2k-1}(2k, 2kf+1))\\
&=H^{i}(D(2k, 2kf))=0, 
\end{align*}
where the last equality follows from the fact that the diagram $D(2k, 2kf)$ has $l(2k)^{2}$ crossings. 
\end{proof}
%
%
\begin{proof}[Proof of Lemma~$\ref{lem_cable}$ $(2)$]
This proof is the same as the proof of Lemma~$\ref{lem_cable}$ $(1)$. 
We prove this by induction on $k$. 
For $k=1$, there is the following exact sequence: 
\begin{multline}
\cdots\to H^{i-1}(E^{m}(3, 3(n+f)+j))\to H^{i}(D^{m-1}(3, 3(n+f)+j)) \to\\ \to H^{i}(D^{m}(3, 3(n+f)+j))\to H^{i}(E^{m}(3, 3(n+f)+j))\to \cdots, 
\label{kanzenkeiretu3}
\end{multline}
where $m=1, 2$, $j=1,2,3$ and $n\geq 0$. 
\par
Note that  
\begin{itemize}
\item $E^{1}(3, 3(n+f)+1)$ is equivalent to $D$ and has $4n+5l_{-}+4l_{+}$ negative crossings, 
\item $E^{1}(3, 3(n+f)+2)$ is equivalent to $D$ and has $2+4n+5l_{-}+4l_{+}$ negative crossings, 
\item $E^{1}(3, 3(n+f)+3)$ is equivalent to $D\sqcup\bigcirc $ and has $3+4n+5l_{-}+4l_{+}$ negative crossings, 
\item $E^{2}(3, 3(n+f)+1)$ is equivalent to $D\sqcup\bigcirc $ and has $4n+5l_{-}+4l_{+}$ negative crossings, 
\item $E^{2}(3, 3(n+f)+2)$ is equivalent to $D$ and has $1+4n+5l_{-}+4l_{+}$ negative crossings, 
\item $E^{2}(3, 3(n+f)+3)$ is equivalent to $D$ and has $3+4n+5l_{-}+4l_{+}$ negative crossings. 
\end{itemize}
Hence $H^{i}(E^{m}(3, 3(n+f)+j))$ is isomorphic to $\KH^{i-n_{-}}(D)$ or $\KH^{i-n_{-}}(D\sqcup\bigcirc )$, where $n_{-}$ is the number of the negative crossings of $E^{m}(3, 3(n+f)+j)$. 
Since $D$ has only $l_{+}$ positive crossings, we have $\KH^{i-n_{-}}(D)=\KH^{i-n_{-}}(D\sqcup\bigcirc )=0$ if $i-n_{-}>l_{+}$. 
Hence $H^{i}(E^{m}(3, 3(n+f)+j))=0$ if $i>4n+3+5l$ and $n\geq 0$. 
\par
From the exact sequence $(\ref{kanzenkeiretu3})$, if $i>4n+4+5l$ and $n\geq 0$, we have 
\begin{align*}
H^{i}(D^{m}(3, 3(n+f)+j))=H^{i}(D^{m-1}(3, 3(n+f)+j)) 
\end{align*}
for $j=1,2,3$ and $m=1,2$. 
By repeating this process, if $n\geq l$ and $i>4n+4+5l$, or $n<l$ and $i>9l$, we obtain 
\begin{align*}
H^{i}(D^{m}(3, 3(n+f)+j))=H^{i}(D(3, 3f))=0, 
\end{align*}
for $j=1,2,3$ and $m=1,2$. 
\par
Suppose that this lemma is true for $1, \dots, k-1$, that is, suppose that for $1\leq g<k$, $j=1, \dots, 2g+1$ and $m=0, \dots, 2g$, we have $H^{i}(D^{m}(2g+1, (2g+1)(n+f)+j))=0$ if 
$i>2g(g+1)(n-l+1)+l(2g+1)^{2}$ and $n\geq l$, or $i>l(2g+1)^{2}$ and $n<l$. 
We will show that Lemma~$\ref{lem_cable}$ $(2)$ is true for $k$. 
We obtain the following exact sequence: 
\begin{multline}
\rightarrow H^{i-1}(E^{m}(2k+1, (2k+1)(n+f)+j))\rightarrow H^{i}(D^{m-1}(2k+1, (2k+1)(n+f)+j)) \label{kanzenkeiretu4}\\
\rightarrow H^{i}(D^{m}(2k+1, (2k+1)(n+f)+j))\rightarrow H^{i}(E^{m}(2k+1, (2k+1)(n+f)+j))\rightarrow ,  
\end{multline}
where $m=1, \dots, 2k$, $j=1, \dots, 2k+1$ and $n\geq 0$. 
To study $H^{i}(D^{m-1}(2k+1, (2k+1)(n+f)+j))$ and $H^{i}(D^{m}(2k+1, (2k+1)(n+f)+j))$, we use the following claim. 
\begin{claim}\label{key_claim2}
Under the induction hypothesis in the proof of Lemma~$\ref{lem_cable}$ $(2)$, if $i>2k(k+1)(n-l+1)+l(2k+1)^{2}-1$ and $n\geq l$, or $i>l(2k+1)^{2}-1$ and $n<l$ then 
we have $H^{i}(E^{m}(2k+1, (2k+1)(n+f)+j))=0$ for any $j=1, \dots, 2k+1$ and $m=1, \dots, 2k$. 
\end{claim}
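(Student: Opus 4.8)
The plan is to prove Claim~\ref{key_claim2} by the same mechanism that controlled $E^{i}_{2k+1,(2k+1)n}$ inside the proof of Lemma~\ref{mainthm2}, now carried out in the cabled setting and combined with the induction hypothesis of Lemma~\ref{lem_cable}$(2)$ at parameter $g=k-1$. The entire content of the claim is that $1$-smoothing the distinguished crossing of $E^{m}(2k+1,(2k+1)(n+f)+j)$ turns the $(2k+1)$-strand cable into a $(2k-1)$-strand cable diagram (in a few boundary cases together with a free circle), so that the unnormalized homology of the former is governed by the already-established vanishing for the latter. Structurally this is the exact analogue of the (appendix-deferred) proof of Claim~\ref{key_claim}.

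First I would establish the geometric identification. For $m=1,\dots,2k$ and $j=1,\dots,2k+1$, the closure of $E^{m}(2k+1,(2k+1)(n+f)+j)$ is equivalent, by the same Reidemeister moves and planar isotopies as in Figures~\ref{orientation1} and \ref{orientation2}, to the closure of a diagram $D^{m'}(2k-1,(2k-1)(n+f)+j')$ for indices $m'\in\{0,\dots,2k-2\}$ and $j'\in\{1,\dots,2k-1\}$ determined by exactly the same rule as in the torus case, in certain boundary values of $(m,j)$ together with a disjoint circle $\bigcirc$. One checks that the resulting $(m',j')$ always fall in the range covered by the induction hypothesis. Since adjoining a free circle tensors the complex with $V$, it does not move the homological degrees in which homology is supported, so the circle is harmless for a vanishing statement.

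Next I would fix the orientation making all crossings of the underlying $(2k-1)$-cable positive and count the number $N$ of negative crossings produced: this is the cabled analogue of the ``$4kn-1$'' count in the torus case, augmented by the negative crossings coming from the parallel copies of the negative crossings of $D$. Passing through the normalization $\KH^{i}(L)=H^{i+n_{-}}(L)$ yields, up to tensoring with $V$, an identification
\[
H^{i}\bigl(E^{m}(2k+1,(2k+1)(n+f)+j)\bigr)\cong H^{\,i-s}\bigl(D^{m'}(2k-1,(2k-1)(n+f)+j')\bigr)
\]
for an explicit homological shift $s=N-n_{-}(D^{m'})$. The induction hypothesis then gives vanishing of the right-hand side once $i-s>2(k-1)k(n-l+1)+l(2k-1)^{2}$ for $n\geq l$ (respectively $i-s>l(2k-1)^{2}$ for $n<l$). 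Substituting $s$ and using the identities $2k(k+1)-2(k-1)k=4k$ and $(2k+1)^{2}-(2k-1)^{2}=8k$ should place the threshold for $H^{i}(E^{m})$ at or below $2k(k+1)(n-l+1)+l(2k+1)^{2}-1$ (respectively $l(2k+1)^{2}-1$), which is the assertion of the claim.

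I expect the main obstacle to be the calibration of $s$ together with the final arithmetic: one must get $N$ exactly right for every pair $(m,j)$, correctly separate the cases carrying an extra free circle from those that do not, and then verify that the gaps $4k$ and $8k$ genuinely absorb the shift so that the claimed bound holds uniformly in $m$ and $j$ across both regimes $n\geq l$ and $n<l$. By contrast, the geometric identification itself is routine once the relevant pictures are drawn; it is the bookkeeping of the degree shift that requires care.
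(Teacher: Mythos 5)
Your overall strategy is the right one and matches the paper's: identify $E^{m}(2k+1,(2k+1)(n+f)+j)$ with a lower-strand cable diagram (possibly with a split circle), orient it so the target diagram is positive, translate through the normalization $\KH^{i}=H^{i+n_{-}}$, and invoke the induction hypothesis of Lemma~\ref{lem_cable}~$(2)$. But there are two genuine gaps. First, your geometric identification is too narrow: you assume the cap slide always lands on a $(2k-1)$-strand cable diagram $D^{m'}(2k-1,(2k-1)(n+f)+j')$, i.e.\ the strand count always drops by exactly two. The paper's proof instead establishes (via four qualitatively different slide patterns, ``type-$1$'' through ``type-$4$'', two of which require iterating the isotopy) that $E^{m}(2k+1,(2k+1)(n+f)+j)$ is equivalent to $D^{s}(2k+1-2x,(2k+1-2x)(n+f)+h)\sqcup U_{\varepsilon}$ for some $x\in\{1,\dots,k\}$ depending on $(m,j)$; the case $x>1$ does occur (the paper's remarks such as ``if $j=2k+1$ then $x=1$'' make clear that $x$ is not identically $1$), and your argument as written does not cover it. The induction hypothesis at $g=k-1$ alone is therefore not enough; one needs it for all $g=k-x$ simultaneously, together with the trivial one-strand case.

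Second, and more seriously, the entire quantitative content of the claim is deferred in your last paragraph. The paper's proof spends most of its length counting the negative crossings $n_{-}$ of $E^{m}(2k+1,(2k+1)(n+f)+j)$ by splitting the diagram into four parts, introducing auxiliary parameters ($x$, and $a,b$ recording how the upward/downward arcs interleave in the last partial row), producing the bounds $Y_{2}$ and $Y'_{2}$, and then verifying inequalities such as
\[
2(k-x)(x-b)+x(2k+1-j)-1+x-b\geq 0
\]
for all admissible $(x,b,j)$, plus a separate treatment of $j=2k+1$ where the inequality degenerates to an \emph{equality}. That the bound is attained with equality in the extremal case means the ``identities $2k(k+1)-2(k-1)k=4k$ and $(2k+1)^{2}-(2k-1)^{2}=8k$'' do not leave slack that would absorb an imprecise crossing count: the calibration you flag as ``the main obstacle'' is in fact the proof, and without it (and without the $x>1$ cases) the claim is not established.
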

We will give a proof of Claim~$\ref{key_claim2}$ in Section~$\ref{appendix}$. 
\par
From Claim~$\ref{key_claim2}$ and the exact sequence $(\ref{kanzenkeiretu4})$, if $i>2k(k+1)(n-l+1)+l(2k+1)^{2}$ and $n\geq l$, or $i>l(2k+1)^{2}$ and $n<l$, we have 
\begin{align*}
H^{i}(D^{m-1}(2k+1, (2k+1)(n+f)+j))=H^{i}(D^{m}(2k+1, (2k+1)(n+f)+j)) 
\end{align*}
 for $m=1, \dots, 2k$ and $j=1, \dots, 2k+1$. 
\par
By repeating this process, if $i>2k(k+1)(n-l+1)+l(2k+1)^{2}$ and $n\geq l$, or $i>l(2k+1)^{2}$ and $n<l$, then for $m=0, \dots, 2k$ and $j=1, \dots, 2k+1$, we obtain 
\begin{align*}
H^{i}(D^{m}(2k+1, (2k+1)(n+f)+j))=H^{i}(D(2k+1, (2k+1)f))=0. 
\end{align*}
\end{proof}
From Lemma~$\ref{lem_cable}$, we can prove Lemma~$\ref{mainthm3}$. 
\begin{proof}[Proof of Lemma~$\ref{mainthm3}$]
From Lemma~$\ref{lem_cable}$, we obtain 
\begin{align*}
\max\{i\in\mathbf{Z}|H^{i}(D(2k, 2k(n+f)))\neq 0\}\leq 2k^{2}(n+l). 
\end{align*}
Hence we have 
\begin{align*}
\max\{i\in\mathbf{Z}|\KH^{i}(K(2k, 2k(n+f)))\neq 0\}\leq 2k^{2}(n+l)-l_{-}(2k)^{2}=2k^{2}(n+f). 
\end{align*}
On the other hand, the dimension of $\operatorname{Lee}^{2k^{2}(n+f)}(K(2k, 2k(n+f)))$ is not zero. 
This implies that 
\begin{align*}
\max\{i\in\mathbf{Z}|\KH^{i}(K(2k, 2k(n+f)))\neq 0\}=2k^{2}(n+f). 
\end{align*}
Similarly we see that  
\begin{align*}
\max\{i\in\mathbf{Z}|\KH^{i}(K(2k+1, (2k+1)(n+f)))\leq  2k(k+1)(n+f)+l_{+} 
\end{align*}
and that the dimension of $\operatorname{Lee}^{2k(k+1)(n+f)}(K(2k+1, (2k+1)(n+f)))$ is not zero. 
Hence, we obtain  
\begin{align*}
2k(k+1)(n+f)&\leq \max\{i\in\mathbf{Z}|\KH^{i}(K(2k+1, (2k+1)(n+f)))\neq 0\}\\
&\leq 2k(k+1)(n+f)+l_{+}. 
\end{align*}
\end{proof}
We use Lemma~$\ref{lem2}$ below to prove Lemmas~$\ref{mainthm4}$ and $\ref{non-trivial2}$. 
\begin{lem}\label{lem2}
Let $K$ be a knot and $D$ be a knot diagram with $l_{+}$ positive crossings and $l_{-}$ negative crossings. 
Put $l=l_{+}+l_{-}$ and $f=l_{+}-l_{-}$. 
For any positive integer $k$ and any $n>l$, we have
\begin{align*}
\dim_{\mathbf{Q}} {\KH^{2k^{2}(n+f)}(K(2k, 2k(n+f)-1))}&=\dim_{\mathbf{Q}} {H^{2k^{2}(n+l)}(D(2k, 2k(n+f)-1))}\\
&=0. 
\end{align*} 
\end{lem}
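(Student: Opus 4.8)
The plan is to treat the two asserted equalities separately. For the first, $\dim_{\mathbf Q}\KH^{2k^2(n+f)}(K(2k,2k(n+f)-1))=\dim_{\mathbf Q}H^{2k^2(n+l)}(D(2k,2k(n+f)-1))$, I would argue by counting crossings of the diagram $D(2k,2k(n+f)-1)$. Since all components are coherently oriented, the $2k$-parallel of $D$ contributes $l_{+}(2k)^2$ positive and $l_{-}(2k)^2$ negative crossings, while the braid block $D_{2k,2kn-1}$ is positive; hence $n_{-}=l_{-}(2k)^2$. The normalization $\KH^{i,j}(D)=H^{i+n_{-},\,j-n_{+}+2n_{-}}(D)$ then gives $\KH^{2k^2(n+f)}=H^{2k^2(n+f)+l_{-}(2k)^2}$, and because $2k^2(n+f)+l_{-}(2k)^2=2k^2(n+l)$ the first equality follows. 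This reduces the lemma to the vanishing statement $H^{2k^2(n+l)}(D(2k,2k(n+f)-1))=0$.

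The point to notice is that this is exactly the endpoint case of Lemma~\ref{lem_cable}$(1)$. Writing $D(2k,2k(n+f)-1)=D^{0}(2k,2k((n-1)+f)+(2k-1))$ and invoking Lemma~\ref{lem_cable}$(1)$ with $n-1$ in place of $n$ (legitimate because $n>l$ forces $n-1\ge l$) already yields $H^{i}=0$ for all $i>2k^2(n+l)$; what remains is precisely the single homological degree $i=2k^2(n+l)$, and the strict hypothesis $n>l$ is what supplies the one extra unit of homological room needed to reach it. I would therefore re-run the inductive long exact sequence argument of Lemma~\ref{lem_cable}, tracking the degree one notch more sharply, and establish the vanishing by induction on $k$.

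For the base case $k=1$, every diagram $E^{1}(2,\cdot)$ appearing in the reduction of $D(2,2(n+f)-1)$ is a diagram of the unknot, so its unnormalized homology is concentrated in a single homological degree; since the reduction chain of the $(-1)$ diagram never involves the topmost twist, all these degrees are at most $2(n+l)-2$, so the exact sequence (\ref{kanzenkeiretu}) gives $H^{2(n+l)}(D(2,2(n+f)-1))\cong H^{2(n+l)}(D(2,2f))=0$, the last equality because $D(2,2f)$ has only $4l<2(n+l)$ crossings (again $n>l$ is used). For the inductive step I would run the sequences (\ref{kanzenkeiretu2}) at the endpoint degree $i=2k^2(n+l)$; the telescoping already employed in Lemma~\ref{lem_cable} shows that the only contributions able to obstruct the vanishing are the groups $H^{2k^2(n+l)-1}(E^{m}(2k,\cdot))$ coming from the top of the chain. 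Following the proof of Lemma~\ref{mainthm2}, the closures of these $E^{m}$ are equivalent to $(2k-2)$-cable diagrams $D^{m-2}(2k-2,\cdot)$ (for $m=1$, together with a disjoint circle; see Figures~\ref{orientation1} and \ref{orientation2}). Subtracting the negative crossings of $E^{m}$ identifies each obstruction group with the Khovanov homology of a $(2k-2)$-cable at exactly \emph{its} top homological degree, which is governed by the $(k-1)$ case of the present lemma; applying the induction hypothesis to that smaller cable kills every obstruction and closes the induction.

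The main obstacle, and the reason the statement is genuinely stronger than Lemma~\ref{lem_cable}$(1)$, is this self-referential degree bookkeeping at the boundary. One must verify that, after the negative-crossing shift, the obstruction group $H^{2k^2(n+l)-1}(E^{m})$ lands on precisely the top degree of the smaller cable controlled by the $(k-1)$ case — neither one degree above (where Lemma~\ref{lem_cable}$(1)$ for $k-1$ would already suffice) nor below — and that the doubling produced by the disjoint circle in the $m=1$ term is correctly absorbed. Making the telescoped long exact sequences close at $i=2k^2(n+l)$ rather than at $i=2k^2(n+l)+1$ is the crux; the remaining manipulations are identical to those in Lemmas~\ref{lem_cable} and \ref{mainthm2}.
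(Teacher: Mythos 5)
Your reduction of the lemma to the single vanishing $H^{2k^{2}(n+l)}(D(2k,2k(n+f)-1))=0$ is correct, as is the normalization computation giving the first equality, and your base case $k=1$ is sound: the relevant $E^{1}$-diagrams are unknot diagrams whose unnormalized homology is concentrated in degree at most $2(n+l)-2$, so the telescoping closes at $i=2(n+l)$ and $n>l$ finishes it. You have also correctly located the crux, namely that the statement asks for one more unit of homological room than Lemma~\ref{lem_cable}$(1)$ provides, and that this unit must come from the missing crossing in the top row (equivalently, from $j\le 2k-1$) together with $n>l$.

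The gap is in the mechanism you propose for the inductive step. You assert that after the negative-crossing shift each obstruction group $H^{2k^{2}(n+l)-1}(E^{m})$ is identified with the homology of a $(2k-2)$-cable \emph{at exactly its top degree}, and that the $(k-1)$ case of the present lemma then kills it. Neither half of this works. First, the diagrams $E^{m}(2k,2k(n+f-1)+j)$ with $1\le j\le 2k-1$ do not all reduce to the one diagram $D(2k-2,(2k-2)(n+f)-1)$ that the $(k-1)$ case of Lemma~\ref{lem2} controls; they reduce to a family $D^{s}(2k-2x,(2k-2x)(n+f)+h)\sqcup U_{\varepsilon}$ with $x$ possibly larger than $1$ and various $s$ and $h$ (this is the content of Figures~\ref{case1}--\ref{case4-2}), and the top-degree homology of those diagrams is in general \emph{nonzero} (see equation $(\ref{b})$ in the proof of Lemma~\ref{mainthm4}), so an induction hypothesis of the shape of Lemma~\ref{lem2} cannot kill them even if the degree did land there. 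Second, the degree does not land there: the point of Claim~\ref{key_claim3} is that, precisely because $j\le 2k-1$, a refined count of the negative crossings of $E^{m}$ places the shifted degree \emph{strictly above} the vanishing threshold of Lemma~\ref{lem_cable} for every smaller cable that occurs, so the obstruction groups die for degree reasons alone and no self-referential induction on $k$ is needed. The landing-exactly-at-the-top phenomenon you describe is what happens for $j=2k$, which is exactly why Lemma~\ref{mainthm4} must run a full dimension count (Lee homology plus the telescoped inequalities) instead of a vanishing argument; importing that picture into Lemma~\ref{lem2} makes the proposed induction fail rather than close. What is missing from your argument, and what the paper supplies, is the explicit crossing-count inequality $l(2k)^{2}+2k^{2}(n-l)-2\ge 2(k-x)^{2}(n-l+1)+l_{+}(2k-2x)^{2}+X_{2}$ (and its type-$3$/type-$4$ analogue) for all configurations arising in the chain.
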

\begin{proof}
We consider the following exact sequence: 
\begin{multline*}
\rightarrow H^{2k^{2}(n+l)-1}(E^{m}(2k, 2k(n+f-1)+j))\rightarrow H^{2k^{2}(n+l)}(D^{m-1}(2k, 2k(n+f-1)+j)) \\
\rightarrow H^{2k^{2}(n+l)}(D^{m}(2k, 2k(n+f-1)+j))\rightarrow H^{2k^{2}(n+l)}(E^{m}(2k, 2k(n+f-1)+j))\rightarrow , 
\end{multline*}
where $m=1, \dots, 2k-1$, $n\geq 0$ and $j=1, \dots, 2k-1$. 
We use the following claim 
to study $H^{2k^{2}(n+l)}(D^{m-1}(2k, 2k(n+f-1)+j))$ and $H^{2k^{2}(n+l)}(D^{m}(2k, 2k(n+f-1)+j))$. 
\begin{claim}\label{key_claim3}
If $i>l(2k)^{2}+2k^{2}(n-l)-2$ and $n>l$ we have 
$H^{i}(E^{m}(2k, 2k(n+f-1)+j))=0$ for any $m=1, \dots, 2k-1$ and $j=1, \dots, 2k-1$ . 
\end{claim}
Compare Claim~$\ref{key_claim3}$ to Claim~$\ref{key_claim}$ (the main differences are the ranges of $i$ and $j$). 
We will give a proof of Claim~$\ref{key_claim3}$ in Section~$\ref{appendix}$. 
\par
From Claim~$\ref{key_claim3}$ and the above exact sequence, 
if $i>l(2k)^{2}+2k^{2}(n-l)-1$ and $n>l$, we have 
\begin{align*}
H^{i}(D^{m-1}(2k, 2k(n+f-1)+j))=H^{i}(D^{m}(2k, 2k(n+f-1)+j)), 
\end{align*}
where $m=1, \dots, 2k-1$ and $j=1, \dots, 2k-1$. 
In particular, 
if $i=2k^{2}(n+l)$, $m=1$ and $j=2k-1$, we obtain 
\begin{align*}
H^{2k^{2}(n+l)}(D(2k, 2k(n+f)-1))&=H^{2k^{2}(n+l)}(D^{0}(2k, 2k(n+f-1)+2k-1))\\
&=H^{2k^{2}(n+l)}(D^{1}(2k, 2k(n+f-1)+2k-1)). 
\end{align*}
By repeating this process, we have 
\begin{align*}
H^{2k^{2}(n+l)}(D(2k, 2k(n+f)-1))&=H^{2k^{2}(n+l)}(D^{1}(2k, 2k(n+f-1)+2k-1))\\
&=H^{2k^{2}(n+l)}(D^{2}(2k, 2k(n+f-1)+2k-1))\\
&=\cdots= \\
&=H^{2k^{2}(n+l)}(D^{2k-1}(2k, 2k(n+f-1)+2k-1))\\
&=H^{2k^{2}(n+l)}(D^{0}(2k, 2k(n+f-1)+2k-2))\\
&=\cdots= \\
&=H^{2k^{2}(n+l)}(D(2k, 2k(n+f-1)))=0, \\ 
\end{align*}
where the last equality follows from Lemma~$\ref{mainthm3}$. 
\end{proof}
By using Lemma~$\ref{lem2}$, we will prove Lemmas~$\ref{mainthm4}$ and $\ref{non-trivial2}$. 
Lemma~$\ref{mainthm4}$ is an extension of Lemma~$\ref{thm2}$. 
\begin{lem}\label{mainthm4}
Let $K$ be a knot and $D$ be a diagram of $K$ with $l_{+}$ positive crossings and $l_{-}$ negative crossings. 
Put $l=l_{+}+l_{-}$ and $f=l_{+}-l_{-}$. 
Then for any positive integer $k$ and any $n>l$, we have
\begin{align*}
\dim_{\mathbf{Q}} \KH^{2k^{2}(n+f)}(K(2k, 2k(n+f)))=\begin{pmatrix}
2k\\
k
\end{pmatrix}.
\end{align*}
\end{lem}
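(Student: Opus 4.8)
The plan is to mirror the proof of Lemma~\ref{mainthm2}, replacing the torus diagrams $D^{l}_{2k+1,(2k+1)n}$ by the cabling diagrams $D^{m}(2k,2k(n+f))$ and using Lemma~\ref{lem2} in place of Lemma~\ref{lem1}. First I would record the two bounds whose coincidence gives the statement. For the lower bound, the proof of Lemma~\ref{mainthm3} already computed $\dim_{\mathbf{Q}}\operatorname{Lee}^{2k^{2}(n+f)}(K(2k,2k(n+f)))=\binom{2k}{k}$, so Theorem~\ref{spect} yields $\dim_{\mathbf{Q}}\KH^{2k^{2}(n+f)}(K(2k,2k(n+f)))\geq\binom{2k}{k}$. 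Since the $2k$-parallel of $D$ contributes exactly $(2k)^{2}l_{-}$ negative crossings and the added torus part is positive, the normalization gives $\KH^{2k^{2}(n+f)}=H^{2k^{2}(n+l)}$; hence it remains to prove the matching upper bound $\dim_{\mathbf{Q}}H^{2k^{2}(n+l)}(D(2k,2k(n+f)))\leq\binom{2k}{k}$.

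For the upper bound I would prove, by induction on $k$, a refined formula computing $\dim_{\mathbf{Q}}H^{2k^{2}(n+l)}(D^{m}(2k,2k(n+f)))$ for every $m=0,\dots,2k-1$, the case $m=0$ being the desired bound. The engine is the long exact sequence $(\ref{kanzenkeiretu2})$, which I would telescope from $D^{m}$ up through $D^{2k-1}$ and then down to the reduced-twist diagram, exactly as in the displayed chain of inequalities in the proof of Lemma~\ref{mainthm2}. The telescoping expresses the target dimension as a sum of the dimensions $\dim_{\mathbf{Q}}H^{2k^{2}(n+l)-1}(E^{m'}(2k,2k(n+f)+j))$ plus the boundary term $\dim_{\mathbf{Q}}H^{2k^{2}(n+l)}(D(2k,2k(n+f)-1))$, which vanishes by Lemma~\ref{lem2}. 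To evaluate the $E^{m'}$ terms I would use the same isotopies as in Lemma~\ref{lem_cable}: the closure of $E^{m'}(2k,\dots)$ is equivalent to the closure of $D^{m'-2}(2k-2,\dots)$ for $m'\geq 2$ and to $D^{0}(2k-2,\dots)\sqcup\bigcirc$ for $m'=1$, each with a computable number of negative crossings, so after the grading shift these become $(2k-2)$-cabling homologies supplied by the inductive hypothesis.

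The base case $k=1$ asks for $\dim_{\mathbf{Q}}\KH^{2(n+f)}(K(2,2(n+f)))=2$; here the relevant $E$-diagrams are diagrams of $K$ (or $K\sqcup\bigcirc$), whose top Khovanov homology is controlled directly, and the argument is the one-strand analog of the $D^{1}_{3,3n}$ computation inside Lemma~\ref{mainthm2}. Finally, as in Lemma~\ref{mainthm2}, the summed upper bound and the Lee lower bound agree at $m=0$; this forces every inequality in the telescoping chain to be an equality (equivalently, the connecting maps behave as in $(\ref{g})$ and $(\ref{f})$), which propagates the refined formula to all $m$ and in particular confirms the value $\binom{2k}{k}$ at $m=0$.

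The main obstacle I expect is the bookkeeping in the induction: identifying the closures of the $E^{m'}$ diagrams with the lower cablings and tracking their negative-crossing counts so that the grading shifts align with the inductive hypothesis, together with verifying that the telescoping terminates exactly at the reduced-twist diagram handled by Lemma~\ref{lem2}. Establishing the correct refined dimension formula for all intermediate $m$, rather than only the top term, is what actually makes the induction close.
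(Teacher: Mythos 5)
Your proposal follows essentially the same route as the paper's proof: induction on $k$ establishing the refined formula $\dim_{\mathbf{Q}}H^{2k^{2}(n+l)}(D^{i}(2k,2k(n+f)))=2\binom{2k-1-i}{k}$ for all $i$, telescoping the long exact sequences with the boundary term killed by Lemma~\ref{lem2}, identifying the $E^{m}$ diagrams with $(2k-2)$-cablings to invoke the inductive hypothesis, and forcing equality via the Lee homology lower bound $\binom{2k}{k}$ at $i=0$. The argument is correct and matches the paper's proof of Lemma~\ref{mainthm4} in all essential respects.
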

\begin{proof}
As in the proof of Lemma~$\ref{mainthm2}$, in order to prove this lemma, it is sufficient to prove the following: 
\begin{align}
\dim_{\mathbf{Q}} H^{2k^{2}(n+l)}(D^{i}(2k, 2k(n+f)))=2
\begin{pmatrix}
2k-1-i\\
k
\end{pmatrix}. \label{b}
\end{align}
where $0\leq i\leq 2k-1$ (for convenience, we define 
$\begin{pmatrix}
a\\
b
\end{pmatrix}=0$ if $0\leq a<b$). 
To prove $(\ref{b})$, we use induction on $k$. 
\par
For $k=1$, 
from Lemma~$\ref{lem2}$ we obtain 
\begin{center}
$\dim_{\mathbf{Q}} H^{2k^{2}(n+l)}(D^{1}(2, 2(n+f)))=\dim_{\mathbf{Q}} H^{2k^{2}(n+l)}(D(2, 2(n+f)-1))=0$. 
\end{center}
Hence we have the following exact sequence: 
\begin{align*}
\dots\rightarrow H^{2(n+l)-1, j-1}(E^{1}(2, 2(n+f)))\rightarrow H^{2(n+l), j}(D(2, 2(n+f)))\rightarrow 0. 
\end{align*}
From the above exact sequence, we obtain 
\begin{align*}
\sum_{j}\dim_{\mathbf{Q}} H^{2(n+l), j}(D(2, 2(n+f)))&\leq \sum_{j} \dim_{\mathbf{Q}} H^{2(n+l)-1, j-1}(E^{1}(2, 2(n+f))). 
\end{align*}
Since the diagram $E^{1}(2, 2(n+f))$ is equivalent to a diagram of the unknot and has $2(n+l)-1$ negative crossings, we have  
\begin{align*}
\sum_{j} \dim_{\mathbf{Q}} H^{2(n+l)-1, j-1}(E^{1}(2, 2(n+f)))
&=\sum_{j}\dim_{\mathbf{Q}} \KH^{0, j}(U)
=2,  
\end{align*}
where $U$ is the unknot. 
Hence we obtain 
\begin{align*}
\sum_{j}\dim_{\mathbf{Q}} H^{2(n+l), j}(D(2, 2(n+f)))\leq 2.  
\end{align*}
On the other hand, the dimension of $\operatorname{Lee}^{2(n+f)}(D(2, 2(n+f)))$ is $2$. 
Hence we obtain 
\begin{align*}
\dim_{\mathbf{Q}} H^{2(n+l)}(D(2, 2(n+f)))=2. 
\end{align*}
\par
Suppose that $(\ref{b})$ is true for $1, \dots, k-1$, that is, suppose that for $1\leq h< k$, $n>0$ and $i=0, \dots, 2h-1$ we have 
\begin{align}
\dim_{\mathbf{Q}} H^{2h^{2}(n+l)}(D^{i}(2h, 2h(n+f)))=2
\begin{pmatrix}
2h-1-i\\
h
\end{pmatrix}. \label{b_suppose}
\end{align}
We will show that $(\ref{b})$ is true for $k$. 
We have the following long exact sequence: 
\begin{multline}
\cdots\to H^{2k^{2}(n+l)-1, j-1}(E^{i+1}(2k, 2k(n+f)))\xrightarrow {g^{i}_{j}} \\
H^{2k^{2}(n+l), j}(D^{i}(2k, 2k(n+f)))\xrightarrow{f^{i}_{j}} H^{2k^{2}(n+l), j}(D^{i+1}(2k, 2k(n+f)))\to \cdots . \label{kanzen4}
\end{multline}
From the exact sequence $(\ref{kanzen4})$ and the same discussion in $(\ref{add1})$, we obtain 
\begin{align}
&\sum_{j}\dim_{\mathbf{Q}} H^{2k^{2}(n+l), j}(D^{i}(2k, 2k(n+f)))\label{add5}\\
&\leq \sum_{j}\sum_{m=i+1}^{2k-1}\dim_{\mathbf{Q}} H^{2k^{2}(n+l)-1, j-1}(E^{m}(2k, 2k(n+f)))\nonumber\\
&\ \ \ \ \ \ \ \ \ \ \ \ \ \ \ \ \ \ \ \ \ \ +\dim_{\mathbf{Q}} H^{2k^{2}(n+l)}(D(2k, 2k(n+f)-1)). \nonumber
\end{align}
From Lemma~$\ref{lem2}$, we have $\dim_{\mathbf{Q}} H^{2k^{2}(n+l)}(D(2k, 2k(n+f)-1))=0$. 
To compute $\dim_{\mathbf{Q}} H^{2k^{2}(n+l)-1}(E^{m}(2k, 2k(n+f)))$, we consider $E^{m}(2k, 2k(n+f))$. 
\par
Note that $E^{m}(2k, 2k(n+f))$ is equivalent to the diagram $D^{m-2}(2k-2, (2k-2)(n+f))$ for $m\geq 2$. 
We give $E^{m}(2k, 2k(n+f))$ an orientation such that all crossings of $D^{m-2}(2k-2, (2k-2)(n+f))$ are positive. 
Then $E^{m}(2k, 2k(n+f))$ has $4kn-2n-1+2(2k-1)l_{+}+((2k)^{2}-2(2k-1))l_{-}$ negative crossings, 
where $l_{+}$ and $l_{-}$ are the number of the positive and negative crossings of $D$, respectively. 
Hence for $m\geq 2$ we obtain
\begin{align}
&\dim_{\mathbf{Q}} H^{2k^{2}(n+l)-1}(E^{m}(2k, 2k(n+f)))\label{add6}\\
&=\dim_{\mathbf{Q}} H^{2(k-1)^{2}(n+l)}(D^{m-2}(2k-2, (2k-2)(n+f)))
=2
\begin{pmatrix}
2k-1-m\\
k-1
\end{pmatrix}. \nonumber 
\end{align}
Similarly, $E^{1}(2k, 2k(n+f))$ is equivalent to the diagram $D(2k-2, (2k-2)(n+f))\sqcup \bigcirc$, where $\bigcirc$ is a circle in the plane. 
We give $E^{1}(2k, 2k(n+f))$ an orientation such that all crossings of $D(2k-2, (2k-2)(n+f))\sqcup \bigcirc$ are positive. 
Then $E^{1}(2k, 2k(n+f))$ has $4kn-2n-1+2(2k-1)l_{+}+((2k)^{2}-2(2k-1))l_{-}$ negative crossings. 
%
%
Hence we obtain 
\begin{align}
&\dim_{\mathbf{Q}} H^{2k^{2}(n+l)-1}(E^{m}(2k, 2k(n+f)))\label{add7}\\
&=\dim_{\mathbf{Q}} H^{2(k-1)^{2}(n+l)}(D^{m-2}(2k-2, (2k-2)(n+f))\sqcup\bigcirc )
=2
\begin{pmatrix}
2k-2\\
k-1
\end{pmatrix}.  \nonumber 
\end{align}
From $(\ref{add5})$, $(\ref{add6})$ and $(\ref{add7})$, we have 
\begin{align}
&\sum_{j}\dim_{\mathbf{Q}} H^{2k^{2}(n+l), j}(D^{i}(2k, 2k(n+f)))\label{hutousiki2}\\
&\leq\sum_{m=i+1}^{2k-1}2\begin{pmatrix}
2k-1-m\\
k-1
\end{pmatrix}
=2\begin{pmatrix}
2k-1-i\\
k
\end{pmatrix}. \nonumber
\end{align}
\par
Finally we will prove that the inequality in $(\ref{hutousiki2})$ is in fact an equality. 
%
At first, we consider the case where $i=0$. 
The dimension of $\operatorname{Lee}^{2k^{2}(n+f)}(D(2k, 2k(n+f)))$ is $\begin{pmatrix}
2k\\
k
\end{pmatrix}$. 
Hence, we have 
\begin{align*} 
\begin{pmatrix}
2k\\
k
\end{pmatrix}&=\dim_{\mathbf{Q}} \operatorname{Lee}^{2k^{2}(n+f)}(D(2k, 2k(n+f)))\\
&\leq \dim_{\mathbf{Q}} H^{2k^{2}(n+l)}(D(2k, 2k(n+f)))
\leq \begin{pmatrix}
2k\\
k
\end{pmatrix}. 
\end{align*}
This implies that we have the equality in $(\ref{hutousiki2})$ for $i=0$. 
This fact implies that for any $j\in \mathbf{Z}$ and $m=0, \dots, 2k-2$, the maps $g^{m}_{j}$ and $f^{m}_{j}$ in $(\ref{kanzen4})$ are injective and surjective, respectively. 
Hence, we have the equality in $(\ref{hutousiki2})$ for $i=0, \dots, 2k-1$ and we obtain
\begin{align*}
\dim_{\mathbf{Q}} H^{2k^{2}(n+l)}(D^{i}(2k, 2k(n+f)))
&=\sum_{j}\dim_{\mathbf{Q}} H^{2k^{2}(n+l), j}(D^{i}(2k, 2k(n+f)))\\
&=2\begin{pmatrix}
2k-1-i\\
k
\end{pmatrix}.
\end{align*}
\end{proof}
%
Next we prove Lemma~$\ref{non-trivial2}$. 
\begin{lem}\label{non-trivial2}
Let $K$ be a knot and $D$ be a diagram of $K$ with $l_{+}$ positive crossings and $l_{-}$ negative crossings. 
Put $l=l_{+}+l_{-}$ and $f=l_{+}-l_{-}$. 
Then for any $n>l$, any positive integer $k$ and $i=0, \dots, k$, we have 
\begin{align*}
\KH^{2k^{2}(n+f), 6k^{2}(n+f)-2i}(K(2k, 2k(n+f)))\neq 0. 
\end{align*}
\end{lem}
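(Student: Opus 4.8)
The plan is to argue by induction on $k$, following the template of Lemma~$\ref{non-trivial1}$ but with the cable diagrams $D(2k, 2k(n+f))$, $E^{1}(2k, 2k(n+f))$ and $E^{2}(2k, 2k(n+f))$ replacing the torus diagrams. For the base case $k = 1$ I would return to the $k = 1$ computation inside the proof of Lemma~$\ref{mainthm4}$: there the top homology $H^{2(n+l), j}(D(2, 2(n+f)))$ is identified, in the top homological degree, with $H^{2(n+l)-1, j-1}(E^{1}(2, 2(n+f)))$, and $E^{1}(2, 2(n+f))$ is a diagram of the unknot. Since the unknot has one-dimensional Khovanov homology in each of the $q$-gradings $+1$ and $-1$, the group $H^{2(n+l)}(D(2, 2(n+f)))$ is supported in exactly two adjacent $q$-gradings; converting them into the Khovanov homology of $K(2, 2(n+f))$ through the shift $\KH^{i, j} = H^{i + n_{-}, j - n_{+} + 2n_{-}}$ with the crossing data of $D(2, 2(n+f))$ yields nonvanishing at $q$-gradings $6(n+f)$ and $6(n+f) - 2$, that is, the cases $i = 0, 1$.

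For the inductive step I would assume the statement for $1, \dots, k-1$ and exploit the splitting of the long exact sequence obtained in the proof of Lemma~$\ref{mainthm4}$. When the inequalities in $(\ref{hutousiki2})$ become equalities, the sequence $(\ref{kanzen4})$ breaks into short exact sequences, and iterating together with the vanishing $H^{2k^{2}(n+l)}(D(2k, 2k(n+f)-1)) = 0$ from Lemma~$\ref{lem2}$ gives
\begin{align*}
\dim_{\mathbf{Q}} H^{2k^{2}(n+l), j}(D(2k, 2k(n+f))) = \sum_{m=1}^{2k-1} \dim_{\mathbf{Q}} H^{2k^{2}(n+l)-1, j-1}(E^{m}(2k, 2k(n+f))).
\end{align*}
Retaining only the $m = 1$ and $m = 2$ summands produces the lower bound
\begin{align*}
\dim_{\mathbf{Q}} H^{2k^{2}(n+l), j}(D(2k, 2k(n+f))) &\geq \dim_{\mathbf{Q}} H^{2k^{2}(n+l)-1, j-1}(E^{1}(2k, 2k(n+f)))\\
&\quad + \dim_{\mathbf{Q}} H^{2k^{2}(n+l)-1, j-1}(E^{2}(2k, 2k(n+f))).
\end{align*}

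I would then identify the two closures exactly as in the proof of Lemma~$\ref{mainthm4}$: $E^{2}(2k, 2k(n+f))$ is equivalent to $D(2k-2, (2k-2)(n+f))$ and $E^{1}(2k, 2k(n+f))$ is equivalent to $D(2k-2, (2k-2)(n+f)) \sqcup \bigcirc$, where $\bigcirc$ is a circle in the plane. Using the negative-crossing counts recorded there, each term becomes the Khovanov homology of the $(2k-2)$-cable $K(2k-2, (2k-2)(n+f))$, respectively of that link together with a split circle, placed in homological degree $2(k-1)^{2}(n+f)$. Since the split circle contributes a tensor factor whose generators lie in $q$-gradings $+1$ and $-1$, the $E^{1}$ contribution is the $E^{2}$ contribution shifted by $\pm 1$. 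Feeding in the induction hypothesis $\KH^{2(k-1)^{2}(n+f), 6(k-1)^{2}(n+f) - 2i'}(K(2k-2, (2k-2)(n+f))) \neq 0$ for $i' = 0, \dots, k-1$, the $E^{2}$ term is nonzero in the $q$-gradings corresponding to $i = 0, \dots, k-1$, while the $E^{1}$ term, displaced by the split circle, is nonzero in those corresponding to $i = 1, \dots, k$. Together the two ranges cover $i = 0, \dots, k$, which is the desired range for the $2k$-cable.

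The delicate point, which I expect to be the main obstacle, is the grading bookkeeping in the last step. One has to convert the bidegree $(2k^{2}(n+l)-1,\, j-1)$ on $E^{1}$ and $E^{2}$ into the homological degree $2(k-1)^{2}(n+f)$ and the $q$-gradings $6(k-1)^{2}(n+f) - 2i'$ of the $(2k-2)$-cable, using the stated negative-crossing numbers and the orientation conventions, and then verify that the target $q$-grading $6k^{2}(n+f) - 2i$ of $K(2k, 2k(n+f))$ is attained for every $i = 0, \dots, k$. Checking that the two ranges $\{0, \dots, k-1\}$ and $\{1, \dots, k\}$ tile $\{0, \dots, k\}$, equivalently that the split-circle shift lines the two contributions up by exactly one step in $i$, is precisely the computation that carries the induction, just as in the torus case of Lemma~$\ref{non-trivial1}$.
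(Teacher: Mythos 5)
Your proposal is correct and follows essentially the same route as the paper: induction on $k$, with the base case read off from the $k=1$ computation in the proof of Lemma~\ref{mainthm4} (where the top homology is identified with that of the unknot diagram $E^{1}(2,2(n+f))$), and the inductive step obtained by keeping only the $m=1,2$ terms of the split long exact sequence, identifying $E^{2}$ with $D(2k-2,(2k-2)(n+f))$ and $E^{1}$ with the same diagram union a split circle, whose $\pm 1$ shift in $q$-grading makes the two contributions cover $i=1,\dots,k$ and $i=0,\dots,k-1$ respectively. The grading bookkeeping you flag as the delicate point is exactly what the paper carries out explicitly with the crossing counts $n_{\pm}$, $n'_{\pm}$, $n''_{\pm}$, and it works out as you anticipate.
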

\begin{proof}
We use induction on $k$. In the case where $k=1$, we need to prove 
\begin{align*}
\KH^{2(n+f), 6(n+f)-1\pm 1}(D(2, 2(n+f)))\neq 0. 
\end{align*}
We have the exact sequence 
\begin{multline*}
\rightarrow H^{2(n+l)-1, j-1}(E^{1}(2, 2(n+f)))\rightarrow H^{2(n+l), j}(D(2, 2(n+f)))\\
\rightarrow H^{2(n+l), j}(D^{1}(2, 2(n+f)))\rightarrow . 
\end{multline*}
It follows from Lemma~$\ref{lem2}$ that 
\begin{align*}
H^{2(n+l), j}(D^{1}(2, 2(n+f)))=H^{2(n+l), j}(D(2, 2(n+f)-1))=0. 
\end{align*}
The diagram $E^{1}(2, 2(n+f))$ is equivalent to a diagram of the unknot and has $2l+2n-1$ negative crossings and $2l$ positive crossings. 
Hence we have 
\begin{align*}
H^{2(n+l)-1, j-1}(E^{1}(2, 2(n+f)))=
\begin{cases}
\mathbf{Q} & \text{if } j=2l+4n-1\pm 1, \\
0 &\text{otherwise}. 
\end{cases}
\end{align*}
By Lemma~$\ref{mainthm4}$, we have $\dim_{\mathbf{Q}} H^{2(n+l)}(D(2, 2(n+f)))=2$.  
From the above exact sequence, we have $H^{2(n+l)-1, j-1}(E^{1}(2, 2(n+f)))=H^{2(n+l), j}(D(2, 2(n+f)))$ 
since $\dim_{\mathbf{Q}} H^{2(n+l)}(D(2, 2(n+f)))=2=\dim_{\mathbf{Q}} H^{2(n+l)-1}(E^{1}(2, 2(n+f)))$. 
Hence we obtain 
\begin{align*}
\KH^{2(n+f), 6(n+f)-1\pm 1}(D(2, 2(n+f)))
&=H^{2(n+l), 2l+4n-1\pm 1}(D(2, 2(n+f)))\\
&=H^{2(n+l)-1, 2l+4n-2\pm 1}(E^{1}(2, 2(n+f)))\\
&=\mathbf{Q}. 
\end{align*}
\par
Suppose that Lemma~$\ref{non-trivial2}$ is true for $1, \dots, k-1$, that is, suppose that for $1\leq h<k$, $n>0$ and $i=0, \dots, h$, we have 
\begin{align}
\KH^{2h^{2}(n+f), 6h^{2}(n+f)-2i}(K(2h, 2h(n+f)))\neq 0. \label{nontri2_suppose}
\end{align}
From the proof of Lemma~$\ref{mainthm4}$ (, recall that the inequality $(\ref{add5})$ is in fact an equality), we have 
\begin{align}
&\dim_{\mathbf{Q}} H^{2k^{2}(n+l), j}(D(2k, 2k(n+f))) \label{add8}\\
&\geq \dim_{\mathbf{Q}} H^{2k^{2}(n+l)-1,j-1}(E^{1}(2k, 2k(n+f))) \nonumber \\
&\ \ \ \ +\dim_{\mathbf{Q}} H^{2k^{2}(n+l)-1,j-1}(E^{2}(2k, 2k(n+f))). \nonumber
\end{align}
The diagram $E^{1}(2k, 2k(n+f))$ is equivalent to $D(2k-2, (2k-2)(n+f))\sqcup\bigcirc $, where $\bigcirc $ is a circle in the plane. 
We give $E^{1}(2k, 2k(n+f))$ an orientation such that all crossings of $D(2k-2, (2k-2)(n+f))\sqcup\bigcirc $ are positive. 
Then $E^{1}(2k, 2k(n+f))$ has $2(2k-1)(f+n)-1+l_{-}(2k)^{2}$ negative crossings and $(2k)^{2}l+(2k-1)2kn-1$ crossings. 
Similarly, the diagram $E^{2}(2k, 2k(n+f))$ is equivalent to $D(2k-2, (2k-2)(n+f))$. 
We give $E^{2}(2k, 2k(n+f))$ an orientation such that all crossings of $D(2k-2, (2k-2)(n+f))$ are positive. 
Then $E^{2}(2k, 2k(n+f))$ has $2(2k-1)(f+n)-1+l_{-}(2k)^{2}$ negative crossings and $(2k)^{2}l+(2k-1)2kn-2$ crossings. 
From $(\ref{add8})$, we have 
\begin{align*}
&\dim_{\mathbf{Q}} \KH^{2k^{2}(n+f), 6k^{2}(n+f)-2i}(D(2k, 2k(n+f)))\\
&\geq \dim_{\mathbf{Q}} \KH^{2(k-1)^{2}(n+f), 6(k-1)^{2}(n+f)-2i+1}(D(2k-2, (2k-2)(n+f))\sqcup\bigcirc )\\
&\ \ \ \ +\dim_{\mathbf{Q}} \KH^{2(k-1)^{2}(n+f), 6(k-1)^{2}(n+f)-2i}(D(2k-2, (2k-2)(n+f))). 
\end{align*}
By the induction hypothesis $(\ref{nontri2_suppose})$, the first term of the last expression is not zero for $i=1, \dots, k$, and the second term is not zero for $i=0, \dots, k-1$. 
This completes this proof. 
\end{proof}
%
%
\begin{rem}
In general Lemma~$\ref{lem2}$ is not true for $(2k+1, (2k+1)n)$-cable links, that is, $\dim_{\mathbf{Q}} \KH^{2k(k+1)(n+f)}(D(2k+1, (2k+1)(n+f)-1))\neq 0$ even though $n>l$. 
A reason is that the maximal homological degree of the Khovanov homology of a $(2k+1, (2k+1)n)$-cable link is not equal to that of the Lee homology of the link. 
Since we need Lemma~$\ref{lem2}$ to prove Lemmas~$\ref{mainthm4}$ and $\ref{non-trivial2}$, we cannot obtain results for  $(2k+1, (2k+1)n)$-cable links corresponding to these lemmas by the same methods. 
\end{rem}
From Lemma~$\ref{non-trivial2}$, we obtain the following. 
\begin{cor}\label{cor_thick}
Let $K$ be a positive knot and $D$ be a positive diagram of $K$ with $l$ crossings. 
Then for any $n>l$ and any positive integer $k$, the homological thickness 
$\operatorname{hw}(K(2k, 2k(n+l)))$ is greater than or equal to $k(k-1)(n+l)+2+ks(K)$, where $s(K)$ is the Rasmussen invariant of $K$. 
\end{cor}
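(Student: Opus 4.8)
The plan is to exhibit two non-zero Khovanov homology groups of $K(2k, 2k(n+l))$ on two far-apart diagonals $j-2i$ and then read off the bound from the definition of $\operatorname{hw}$ used in Corollary~\ref{torus_thick}. Since $K$ is positive we have $l_{-}=0$, $l_{+}=l$, hence $f=l$ and $K(2k,2k(n+l))=K(2k,2k(n+f))$; as $n>l$, Lemma~\ref{non-trivial2} applies. Choosing $i=k$ there produces the non-zero group
\[
\KH^{2k^{2}(n+l),\,6k^{2}(n+l)-2k}(K(2k,2k(n+l)))\neq 0,
\]
which lies on the diagonal $j-2i=2k^{2}(n+l)-2k$. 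This is the "low" diagonal.

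For the "high" diagonal I would exploit that the diagram $D(2k,2k(n+l))$ is positive and apply Khovanov's description of the $0$-th homology of a positive link (Theorem~\ref{khovanov2}). Its crossing number is $c=(2k)^{2}l+2kn(2k-1)$, which is just $n_{+}+n_{-}$ as computed in the proof of Lemma~\ref{non-trivial2} (with $l_{-}=0$). Theorem~\ref{khovanov2} then gives a non-zero class
\[
\KH^{0,\,c-s_{0}(D)+2}(K(2k,2k(n+l)))\neq 0
\]
on the diagonal $j-2i=c-s_{0}(D)+2$, where $s_{0}(D)$ denotes the number of Seifert circles of $D=D(2k,2k(n+l))$.

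The heart of the proof is to evaluate $s_{0}(D)$ in terms of $s(K)$. I would argue that the Seifert state of the cable diagram is exactly the $2k$-parallel of the Seifert state of the original positive diagram $D_{K}$ of $K$: the inserted torus piece $D_{2k,2kn}$ is a coherently oriented positive braid, so its oriented resolution is the identity braid on $2k$ strands and leaves the circle count unchanged, while Seifert's algorithm otherwise commutes with taking parallels. Hence $s_{0}(D)=2k\cdot s_{0}(D_{K})$. Because $D_{K}$ is positive, the sharp slice--Bennequin equality gives $s(K)=l-s_{0}(D_{K})+1$, so $s_{0}(D)=2k(l-s(K)+1)$. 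Substituting $c$ and $s_{0}(D)$, a direct computation gives
\[
(c-s_{0}(D)+2)-(2k^{2}(n+l)-2k)=2k(k-1)(n+l)+2k\,s(K)+2,
\]
and since this quantity is positive (as $s(K)\geq 0$ and $n>l\geq 0$) the high diagonal dominates the low one. Plugging the difference into $\operatorname{hw}(L)=\tfrac12(\delta^{\max}-\delta^{\min})+1$ yields exactly $k(k-1)(n+l)+ks(K)+2$.

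The main obstacle is the Seifert-circle computation $s_{0}(D)=2k\,s_{0}(D_{K})$ together with its translation into $s(K)$: one must check rigorously that inserting the pure positive braid does not change the number of Seifert circles and that parallelization commutes with Seifert's algorithm, and then invoke that positive diagrams attain equality in the slice--Bennequin bound, i.e. $s(K)=l-s_{0}(D_{K})+1$. Everything else is the bookkeeping of the two diagonals and a short arithmetic substitution.
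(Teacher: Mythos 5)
Your proposal is correct and follows essentially the same route as the paper: the low diagonal from Lemma~\ref{non-trivial2} with $i=k$, the high diagonal from Theorem~\ref{khovanov2} applied to the positive cable diagram, and the identity $s(K)=l+1-s_{0}(D_{K})$ for positive knots. The only difference is cosmetic: you make explicit the count $s_{0}(D(2k,2k(n+l)))=2k\,s_{0}(D_{K})$, which the paper uses silently when writing $-2ks_{0}(D)$ in the $q$-grading.
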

\begin{proof}
By Lemma~$\ref{non-trivial2}$, we have 
\begin{center}
$\KH^{2k^{2}(n+l), 6k^{2}(n+l)-2k}(K(2k, 2k(n+l)))\neq 0$. \par
\end{center}
Since $D(2k, 2k(n+l))$ is also positive diagram, from Theorem~$\ref{khovanov2}$, we obtain 
\begin{center}
$\KH^{0, 4k^{2}l+2kn(2k-1)-2ks_{0}(D)+2}(K(2k, 2k(n+l)))\neq 0$,  \par
\end{center}
where $s_{0}(D)$ is the number of Seifert circles of $D$. 
Hence 
\begin{align*}
\operatorname{hw}(K(2k, 2k(n+l)))&\geq k(k-1)(n+l)+2+k(l+1-s_{0}(D)). 
\end{align*}
It is known that the Rasmussen invariant $s(K)$ of a positive knot $K$ is $l+1-s_{0}(D)$, where $D$ is a positive diagram of $K$ with $l$ crossings (see \cite[Section~$5.2$]{rasmussen1}). 
Hence we obtain 
\begin{align*}
\operatorname{hw}(K(2k, 2k(n+l)))&\geq k(k-1)(n+l)+2+k\cdot s(K). 
\end{align*}
\end{proof}
\begin{rem}
Corollary~$\ref{cor_thick}$ is an extension of Corollary~$\ref{torus_thick}$. 
From Theorem~$\ref{dalt_hw}$, if $n$ is sufficiently large, the $(2k, 2kn)$-cabling of any positive knot $K$ has no diagram which is alternating after $k(k-1)n+ks(K)-1$ or less crossing changes. 
\end{rem}
\section{An application for twisted Whitehead doubles}\label{apply}
In this section, we consider twisted Whitehead doubles of any knot and compute their Khovanov homologies. \par
Let $K$ be a knot. 
A twisted Whitehead double of $K$ is represented by the diagram $L(D, q)$ in Figure~$\ref{double}$, where $D$ is a diagram of $K$ and $q$ is an integer. 
The right picture in Figure~$\ref{double1}$ is a twisted Whitehead double of the left-handed trefoil. 
\par
A cable link is obtained from a twisted Whitehead double of any knot by smoothing at a crossing. 
In Section~$\ref{main2}$, we give some computations of the Khovanov homology groups of cable links. 
By applying these computations, we will calculate the Khovanov homology groups of a twisted Whitehead double of any knot with sufficiently many twists. 
Moreover we compute their Rasmussen invariants (Corollary~$\ref{ras_double}$). \par
Let $D$ be a knot diagram with $l_{+}(D)$ positive crossings and $l_{-}(D)$ negative crossings. 
Put $l=l_{+}(D)+l_{-}(D)$ and $f=l_{+}(D)-l_{-}(D)$. 
Let $L(D, q)=L$, $L_{0}$ and $L_{1}$ be knot diagrams depicted in Figure~$\ref{double}$, where $q$ is a non-negative integer (for example, see Figure~$\ref{double1}$). 
In the case where $q$ is negative, we define $L(D, q)$ as the mirror image of $L(-D, -q+1)$, where $-D$ is the mirror image of $D$. 
\begin{figure}[!h]
\begin{center}
\includegraphics[scale=0.4]{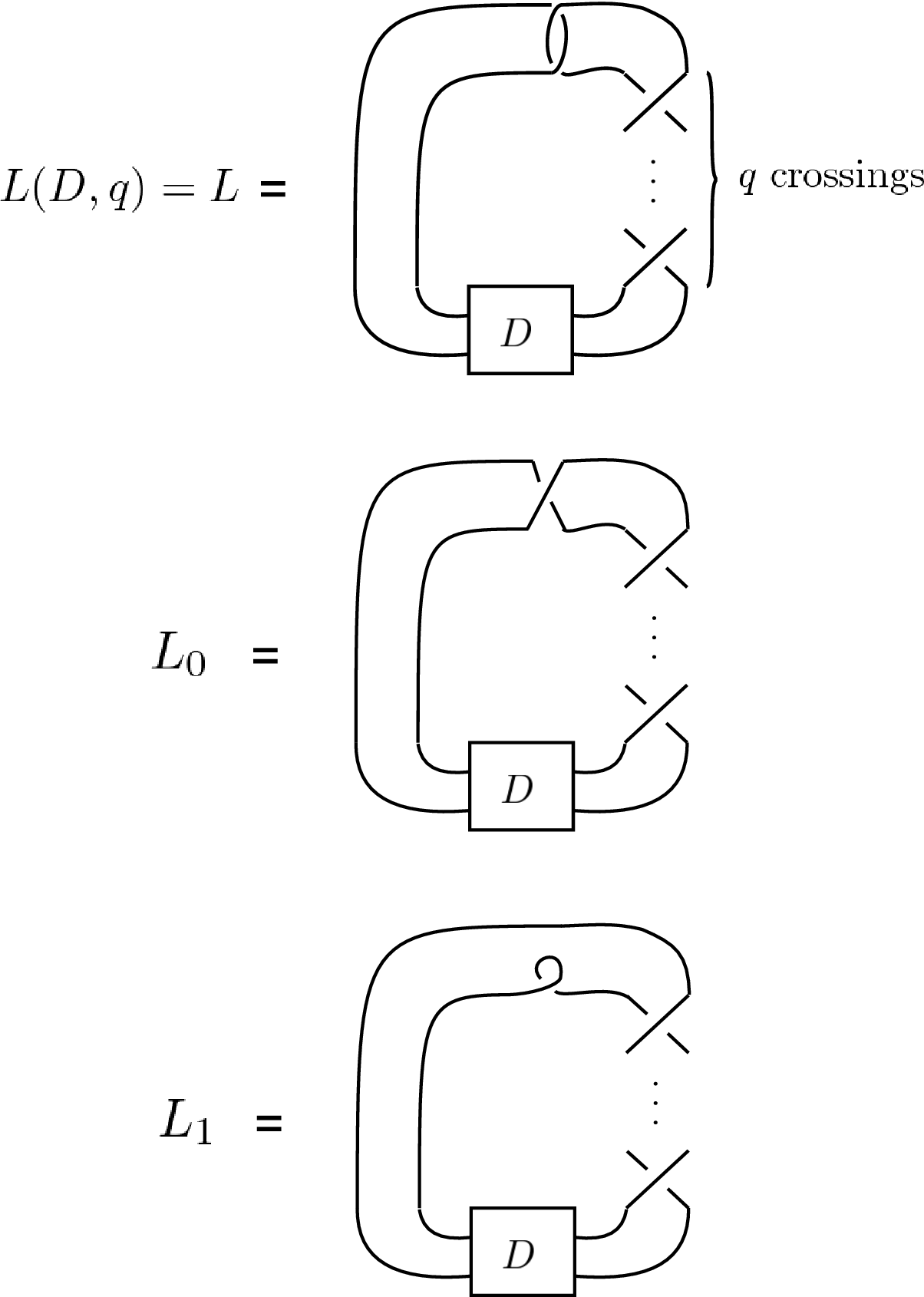}
\end{center}
\caption{$L(D, q)=L$, $L_{0}$ and $L_{1}$, where $q$ is non-negative. }
\label{double}
\end{figure}
\begin{figure}[!h]
\begin{center}
\includegraphics[scale=0.4]{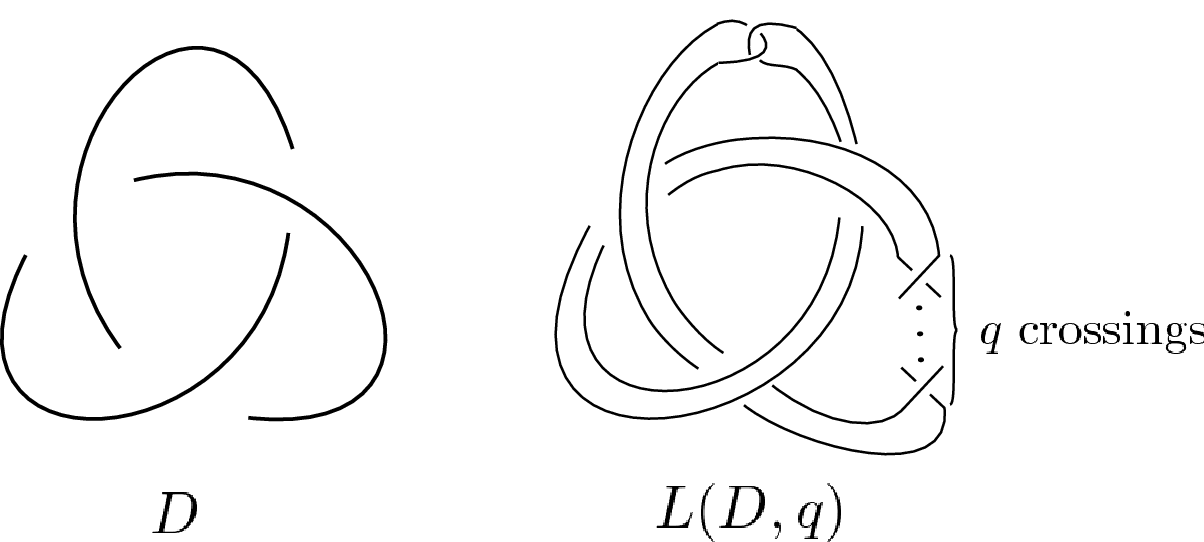}
\end{center}
\caption{An example of $L(D, q)$. }
\label{double1}
\end{figure}
\par
By the definition, we have 
\begin{align*}
H^{i, j}(L_{1})&=H^{i-1,j-2}(D(2, q+2f)), \\
H^{i, j}(L_{0})&=H^{i-1,j-1}(D(2, q-1+2f)). 
\end{align*}
To study the Khovanov homology of $L(D,q)$, we compute $H^{i, j}(D(2, q-1+2f))$ for some $i$ and $j$. 
\begin{lem}\label{lem3}
For $n>l+1$, we have
\begin{align*}
H^{2(n+l)-1, j}(D(2, 2(n+f)-1))=
\begin{cases}
\mathbf{Q}& \text{if }j=2l+4n-2, \\
0&\text{if }j\neq 2l+4n-3\pm 1, 
\end{cases}
\end{align*}
and for $n>l$ and any $i\geq 2(n+l)$, we have 
\begin{align*}
H^{i}(D(2, 2(n+f)-1))=0. 
\end{align*}
\end{lem}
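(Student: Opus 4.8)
The plan is to run Viro's long exact sequence (Theorem~\ref{viro}) a single time, applied to the diagram $D(2,2(n+f)-1)$, and to read off both assertions by degree counting. Concretely, I would smooth the lowest crossing of the twist region of $D(2,2(n+f)-1)$: the $0$-smoothing returns $D(2,2(n+f)-2)$ and the $1$-smoothing returns the unknot diagram $E^1(2,2(n+f)-1)$, in accordance with the identifications $D^1(2,q')=D(2,q'-1)$ used throughout Section~\ref{main2}. This yields the exact sequence
\begin{multline*}
\cdots\to H^{i-1,j-1}(E^1(2,2(n+f)-1))\to H^{i,j}(D(2,2(n+f)-1))\to\\
H^{i,j}(D(2,2(n+f)-2))\to H^{i,j-1}(E^1(2,2(n+f)-1))\to\cdots,
\end{multline*}
and the whole proof becomes an analysis of the two outer terms, noting that $D(2,2(n+f)-2)=D(2,2((n-1)+f))$ is the $(n-1)$-instance of the links already understood in this section.

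First I would record the two inputs. The diagram $E^1(2,2(n+f)-1)$ is a diagram of the unknot, and counting as in the proofs of Lemmas~\ref{lem_cable} and \ref{lem2} it has $2l$ positive and $2l+2n-2$ negative crossings; hence its unnormalized homology is a shift of $\KH(U)$, concentrated in homological degree $2(n+l)-2$, with one-dimensional pieces exactly at $q$-gradings $2l+4n-3$ and $2l+4n-5$. For the other term, Lemma~\ref{mainthm3} shows that $H^{i}(D(2,2(n+f)-2))=0$ for $i>2(n+l)-2$, while for $n>l+1$ (so that $n-1>l$) Lemma~\ref{mainthm4} gives $\dim_{\mathbf{Q}}H^{2(n+l)-2}(D(2,2(n+f)-2))=2$ and Lemma~\ref{non-trivial2} places nonzero classes at the $q$-gradings $2l+4n-4$ and $2l+4n-6$; by the dimension count these account for all of the top homology, so it is exactly $\mathbf{Q}$ at each of those two gradings and zero otherwise.

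The second assertion then follows immediately: for every $i\ge 2(n+l)$ both $H^{i,j}(D(2,2(n+f)-2))$ (as $i>2(n+l)-2$) and $H^{i-1,j-1}(E^1(2,2(n+f)-1))$ (as $E^1$ is concentrated in degree $2(n+l)-2$) vanish, so exactness forces $H^{i,j}(D(2,2(n+f)-1))=0$; this uses only $n-1\ge l$, i.e. $n>l$. For the first assertion I would specialize the sequence to $i=2(n+l)-1$. Since $H^{2(n+l)-1,j}(D(2,2(n+f)-2))=0$, the map $H^{2(n+l)-2,j-1}(E^1(2,2(n+f)-1))\to H^{2(n+l)-1,j}(D(2,2(n+f)-1))$ is surjective, so the target is the cokernel of $H^{2(n+l)-2,j}(D(2,2(n+f)-2))\to H^{2(n+l)-2,j-1}(E^1(2,2(n+f)-1))$. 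Comparing the two graded lists from the previous paragraph: at $j=2l+4n-2$ the source is zero and the target is $\mathbf{Q}$, giving $\mathbf{Q}$; for $j\notin\{2l+4n-2,2l+4n-4\}$ both sides vanish, giving $0$; and at $j=2l+4n-4$ both are $\mathbf{Q}$, so the answer is left unspecified, exactly as in the statement. This is precisely why the first assertion needs $n>l+1$ while the second needs only $n>l$.

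The main obstacle is bookkeeping rather than conceptual. One must convert the crossing data of $E^1(2,2(n+f)-1)$, taken with respect to its \emph{unknot} orientation, through the normalization $\KH^{i,j}(D)=H^{i+n_-,j-n_++2n_-}(D)$, and likewise translate Lemmas~\ref{mainthm4} and \ref{non-trivial2} (stated for $\KH$ of $K(2,2((n-1)+f))$) into the unnormalized gradings of $D(2,2(n+f)-2)$, so that the two $q$-grading lists line up and the cokernel really lands at $j=2l+4n-2$. The one point genuinely requiring care beyond arithmetic is confirming that $E^1(2,2(n+f)-1)$ is the unknot with the asserted crossing counts once the second parameter is pushed down to $2(n+f)-1$; I would verify this exactly as in the analogous computations in the proofs of Lemmas~\ref{lem_cable} and \ref{lem2}.
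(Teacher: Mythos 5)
Your proposal is correct and follows essentially the same route as the paper: the same application of Viro's sequence to $D(2,2(n+f)-1)$ producing $D^1(2,2(n+f)-1)=D(2,2(n+f)-2)$ and the unknot diagram $E^1(2,2(n+f)-1)$ with $2l$ positive and $2l+2n-2$ negative crossings, with the two outer terms computed via Lemmas~\ref{mainthm3}, \ref{mainthm4} and \ref{non-trivial2} and the answer read off as a cokernel. The only cosmetic difference is that you deduce the vanishing for $i\geq 2(n+l)$ directly from the same exact sequence, whereas the paper cites Lemmas~\ref{lem2} and \ref{mainthm3} for that part.
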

\begin{proof}
We obtain the following exact sequence: 
\begin{multline*}
\rightarrow H^{2(n+l)-2, j}(D^{1}(2, 2(n+f)-1))\rightarrow H^{2(n+l)-2, j-1}(E^{1}(2, 2(n+f)-1))\rightarrow \\
H^{2(n+l)-1, j}(D(2, 2(n+f)-1))\rightarrow H^{2(n+l)-1, j}(D^{1}(2, 2(n+f)-1))\rightarrow , 
\end{multline*}
where $E^{m}(p, q)$ and $D^{m}(p,q)$ are given in Figure~$\ref{E(mpq)}$. By Lemma~$\ref{mainthm3}$ we have
\begin{align*}
H^{2(n+l)-1, j}(D^{1}(2, 2(n+f)-1))&=H^{2(n+l)-1, j}(D(2, 2(n+f)-2))=0. 
\end{align*}
The diagram $E^{1}(2, 2(n+f)-1)$ is a diagram of the unknot and has $2l+2n-2$ negative crossings and $2l$ positive crossings. Hence we have
\begin{align*}
H^{2(n+l)-2, j-1}(E^{1}(2, 2(n+f)-1))=
\begin{cases}
\mathbf{Q}& \text{if }j=2l+4n-3\pm 1, \\
0& otherwise. 
\end{cases}
\end{align*}
By Lemmas~$\ref{non-trivial2}$ and $\ref{mainthm4}$, we obtain 
\begin{align*}
H^{2(n+l)-2, j}(D(2, 2(n+f)-2))=
\begin{cases}
\mathbf{Q}& \text{if }j=2l+4n-5\pm 1, \\
0& otherwise. 
\end{cases}
\end{align*}
From the above exact sequence, we have 
\begin{align*}
H^{2(n+l)-1, j}(D(2, 2(n+f)-1))=
\begin{cases}
\mathbf{Q}& \text{if }j=2l+4n-2, \\
0&\text{if }j\neq 2l+4n-3\pm 1. 
\end{cases}
\end{align*}
The second claim follows from Lemmas~$\ref{lem2}$ and $\ref{mainthm3}$. 
\end{proof}
By using Lemma~$\ref{lem3}$, we can compute some Khovanov homology groups of $L(D, q)$. 
\begin{prop}\label{lem4}
Let $D$ be a knot diagram with $l_{+}(D)$ positive crossings and $l_{-}(D)$ negative crossings. 
Put $l=l_{+}(D)+l_{-}(D)$. 
Let $n$ be an integer which is greater than $l$. 
\par
(I) In the case where $q=2n$, we have
\begin{align*}
\KH^{0, j}(L(D, q))=
\begin{cases}
\mathbf{Q}& \text{if }j=-2\pm 1, \\
0&\text{otherwise}. 
\end{cases}
\end{align*}
\par
(II) In the case where $q=2n+1$, we have
\begin{align*}
\KH^{2, j}(L(D, q))=
\begin{cases}
\mathbf{Q}& \text{if }j=5, \\
0&\text{if }j\neq 5, 3. 
\end{cases}
\end{align*}
\end{prop}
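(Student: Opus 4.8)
The plan is to feed the two given identities for $H(L_0)$ and $H(L_1)$ into Viro's long exact sequence (Theorem~\ref{viro}) for the smoothing of $L(D,q)$ at a clasp crossing, and then to track the single homological degree of $L(D,q)$ that the normalization sends to $\KH^0$ (in case (I)) or $\KH^2$ (in case (II)). First I would substitute $q=2n$ (resp. $q=2n+1$) into the identities, so that $L_1$ is governed by the cable $D(2,2(n+f))$ (resp. $D(2,2(n+f)+1)$) and $L_0$ by $D(2,2(n+f)-1)$ (resp. $D(2,2(n+f))$). The inputs I need about these cables are: for the even cable $D(2,2(n+f))$, that its top unnormalized homology lies in degree $2(n+l)$, is $2$-dimensional, and has $q$-gradings fixed by the $k=1$ cases of Lemmas~\ref{mainthm4} and \ref{non-trivial2}; and for the odd cables $D(2,2(n+f)\pm1)$, that Lemma~\ref{lem3} — used directly, and used again after replacing $n$ by $n+1$ — supplies both the vanishing of $H^i$ above the relevant degree and the location of the surviving one-dimensional top group.

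Next I would examine the long exact sequence in the maximal homological degree in which $H(L)$ can be nonzero. In both cases the two neighbouring terms coming from $L_0$ vanish there: the even cable vanishes above degree $2(n+l)$ by Lemma~\ref{mainthm3}, and the odd cable $D(2,2(n+f)-1)$ vanishes at and above degree $2(n+l)$ by the second claim of Lemma~\ref{lem3}. Hence the connecting homomorphisms force an isomorphism between the top group of $H(L)$ and the top group of $H(L_1)$, which by the identity equals the top homology of the relevant cable. Concretely, in case (I) this gives $H^{2(n+l)+2}(L)\cong H^{2(n+l)}(D(2,2(n+f)))$, a two-dimensional group with known $q$-gradings, and in case (II) it gives $H^{2(n+l)+3}(L)\cong H^{2(n+l)+1}(D(2,2(n+f)+1))$, the one-dimensional odd-cable top group that Lemma~\ref{lem3} locates (and whose vanishing outside a two-element set of $q$-gradings it also records).

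Finally I would compute the crossing data $N_+(L)$, $N_-(L)$ of the diagram $L(D,q)$ directly from Figure~\ref{double} and apply $\KH^{i,j}(L)=H^{i+N_-,\,j-N_++2N_-}(L)$. The key point is that the shift is arranged so that $i+N_-$ equals exactly the top degree found above — so $\KH^0(L)$ (resp. $\KH^2(L)$) is precisely that top group — and so that the $n$-dependent part of the $q$-gradings cancels, leaving the constants $-2\pm1$ in case (I) and the value $5$ (with the remaining possibly-nonzero slot at $j=3$ and zeros elsewhere) in case (II). I expect the main obstacle to be exactly this last bookkeeping: determining $N_+(L)$ and $N_-(L)$ correctly, since the clasp, the $q$ twists carried on antiparallel strands, and the doubled crossings of $D$ all contribute and the split into positive and negative crossings behaves differently for even and odd $q$ — together with verifying that the connecting maps in the long exact sequence genuinely vanish in the top degree so that the isomorphism above is legitimate.
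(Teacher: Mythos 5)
Your plan reproduces the paper's argument essentially step for step: both identify $L_1$ and $L_0$ with the cables $D(2,q+2f)$ and $D(2,q-1+2f)$, feed Lemmas~\ref{mainthm4} and \ref{non-trivial2} (even cable) and Lemma~\ref{lem3} with $n$ shifted by one (odd cable) into Viro's long exact sequence, use the vanishing of the $L_0$ terms in the top degree to get $H(L)\cong H(L_1)$ there, and finish by the same normalization bookkeeping with the crossing counts of $L(D,q)$. The approach is correct and is the same as the paper's; the only remaining work is the routine count $N_-(L(D,2n))=2n+2+2l$, $N_+=2l$ (resp.\ $N_-(L(D,2n+1))=2n+1+2l$, $N_+=2+2l$), which lands the top degree at $\KH^0$ (resp.\ $\KH^2$) exactly as you anticipated.
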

\begin{proof}
Put $f=l_{+}(D)-l_{-}(D)$. \par
(I) In the case where $q=2n$. \par
From Lemma~$\ref{mainthm4}$, we obtain $\dim_{\mathbf{Q}} H^{2(n+l)}(D(2, 2(f+n)))=2$.  
From Lemma~$\ref{non-trivial2}$, we have 
$H^{2(n+l), 4n+2l-1\pm 1}(D(2, 2(f+n)))\neq 0$. Hence we obtain 
\begin{align*}
H^{2(n+l)+1, j}(L_{1})=H^{2(n+l), j-2}(D(2, 2(f+n)))
=
\begin{cases}
\mathbf{Q}& \text{if }j=4n+2l+1\pm 1, \\
0 &\text{otherwise}. 
\end{cases}
\end{align*}
From Lemma~$\ref{lem3}$, we obtain 
$H^{i, j}(L_{0})=H^{i-1, j-1}(D(2, 2(f+n)-1))=0$ if $i>2(n+l)$. 
Now there is the following exact sequence. 
\begin{center}
$\rightarrow H^{2(n+l)+1, j}(L_{0})\rightarrow H^{2(n+l)+1, j-1}(L_{1})\rightarrow H^{2(n+l)+2, j}(L)\rightarrow H^{2(n+l)+2, j}(L_{0})\rightarrow $. 
\end{center}
Since $H^{2(n+l)+1, j}(L_{0})=H^{2(n+l)+2, j}(L_{0})=0$, we have 
\begin{align*}
H^{2(n+l)+2, j}(L)&=
\begin{cases}
\mathbf{Q}& \text{if }j=4n+2l+2\pm 1,\\ 
0 &\text{otherwise }. 
\end{cases}
\end{align*}
The diagram $L=L(D, 2n)$ has $2n+2+2l$ negative crossings and $2l$ positive crossings. By the definition, we obtain 
\begin{align*}
\KH^{0, j}(L(D, q))=
\begin{cases}
\mathbf{Q}& \text{if }j=-2\pm 1, \\
0&\text{otherwise}. 
\end{cases}
\end{align*}
\par
(II) In the case where $q=2n+1$. \par
We can proof this by the same method as (I). 
It follows from Lemmas~$\ref{mainthm3}$ and $\ref{lem3}$ that
\begin{align*}
H^{2(n+l)+2, j}(L_{1})=H^{2(n+l)+1, j-2}(D(2, 2f+2n+1))
=
\begin{cases}
\mathbf{Q}& \text{if }j=4n+2l+4,\\ 
0 &\text{if }j\neq 4n+2l+3\pm 1, 
\end{cases}
\end{align*}
and 
$H^{i, j}(L_{0})=H^{i-1, j-1}(D(2, 2f+2n))=0$ if $i>2(n+l)+1$. 
Now we have the following exact sequence: 
\begin{center}
$H^{2(n+l)+2, j}(L_{0})\rightarrow H^{2(n+l)+2, j-1}(L_{1})\rightarrow H^{2(n+l)+3, j}(L)\rightarrow H^{2(n+l)+3, j}(L_{0})$. 
\end{center}
Since $H^{2(n+l)+2, j}(L_{0})=H^{2(n+l)+3, j}(L_{0})=0$, we obtain 
\begin{align*}
H^{2(n+l)+3, j}(L)&=
H^{2(n+l)+2, j-1}(L_{1})
=
\begin{cases}
\mathbf{Q}& \text{if }j=4n+2l+5,\\ 
0 &\text{if }j\neq 4n+2l+4\pm 1. 
\end{cases}
\end{align*}
The diagram $L=L(D, 2n+1)$ has $2n+1+2l$ negative crossings and $2+2l$ positive crossings. By the definition we have
\begin{align*}
\KH^{2, j}(L(D, q))=
\begin{cases}
\mathbf{Q}& \text{if }j=5, \\
0&\text{if }j\neq 5, 3. 
\end{cases}
\end{align*}
\end{proof}
\begin{cor}\label{cor5-1}
Let $D$ be a knot diagram with $l_{+}(D)$ positive crossings and $l_{-}(D)$ negative crossings. 
Put $l=l_{+}(D)+l_{-}(D)$. 
Let $n$ be an integer which is greater than $l$. 
Then we have $s(L(D, 2n))=-2$, where $s(K)$ is the Rasmussen invariant of a knot $K$. 
\end{cor}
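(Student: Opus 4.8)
The plan is to read off the Rasmussen invariant directly from the homological-degree-zero Khovanov homology computed in Proposition~\ref{lem4}~(I), using the Lee spectral sequence of Theorem~\ref{spect}. Recall that for a knot $K$ the Lee homology is two-dimensional and concentrated entirely in homological degree $0$; moreover the $q$-filtration endows its two generators with filtration levels $s(K)-1$ and $s(K)+1$, and this is precisely how the Rasmussen invariant $s(K)$ is characterized. (As a check of the convention, the unknot has $\KH^{0,\pm 1}=\mathbf{Q}$, giving $s=0$.) Since $L(D,2n)$ is a single-component knot, I would note at the outset that $\dim_{\mathbf{Q}}\operatorname{Lee}(L(D,2n))=2$, all of it in homological degree $0$.

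First I would invoke Proposition~\ref{lem4}~(I), which yields
\begin{align*}
\KH^{0,j}(L(D,2n))=\begin{cases}\mathbf{Q}&\text{if }j=-2\pm 1,\\ 0&\text{otherwise},\end{cases}
\end{align*}
so that $\dim_{\mathbf{Q}}\KH^{0}(L(D,2n))=2$, supported in $q$-gradings $-3$ and $-1$. The spectral sequence of Theorem~\ref{spect} has $E_{2}$-page the Khovanov homology and $E_{\infty}$-page the Lee homology, and its differentials raise the homological degree by one. Hence $E_{\infty}^{0}$ is a subquotient of $E_{2}^{0}=\KH^{0}(L(D,2n))$, giving $\dim_{\mathbf{Q}}\operatorname{Lee}^{0}(L(D,2n))\le 2$. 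Because the Lee homology of a knot has total dimension exactly $2$ and lives only in homological degree $0$, equality must hold, which forces every spectral-sequence differential into or out of homological degree $0$ to vanish. Consequently both generators of $\KH^{0}(L(D,2n))$ survive to $E_{\infty}$ with their filtration levels unchanged.

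I would then conclude that the two Lee generators of $L(D,2n)$ sit at filtration levels $-3$ and $-1$. Matching these with $s(L(D,2n))-1$ and $s(L(D,2n))+1$ yields $s(L(D,2n))=-2$, as claimed.

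The main obstacle is the bookkeeping in the middle step: one must verify that the $q$-grading of a surviving $E_{2}^{0,j}$ class equals the filtration level of the corresponding Lee class with no hidden shift, and that the dimension count genuinely forces survival rather than merely bounding dimensions. Both points are settled by the single observation that $\dim_{\mathbf{Q}}\operatorname{Lee}^{0}=2=\dim_{\mathbf{Q}}\KH^{0}$ leaves no room for any nonzero differential touching homological degree $0$, so the associated graded there coincides with $\KH^{0}$ grading by grading.
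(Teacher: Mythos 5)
Your proposal is correct and follows essentially the same route as the paper's own proof: both combine the computation of $\KH^{0,j}(L(D,2n))$ from Proposition~\ref{lem4}~(I) with the Lee spectral sequence and the fact that the two Lee generators of a knot sit at $q$-filtration levels $s\pm 1$, forcing $\{s-1,s+1\}=\{-3,-1\}$ and hence $s=-2$. The extra dimension-count argument you give for why both classes survive is a harmless elaboration of what the paper leaves implicit.
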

\begin{proof}
From Theorem~$\ref{4.3}$, we have $\dim_{\mathbf{Q}}\operatorname{Lee}^{0}(L(D, 2n))=2$. 
Let $s_{\max}$ and $s_{\min}$ be its generators. 
Assume that the $q$-grading of $s_{\max}$ is greater than that of $s_{\min}$. 
From the definition of the Rasmussen invariant, the $q$-grading of $s_{\max}$ is $s(L(D, 2n))+1$ and that of $s_{\min}$ is $s(L(D, 2n))-1$. 
Since there is a spectral sequence whose $E_{\infty}$-page is the Lee homology and $E_{2}$-page is the Khovanov homology, we have
\begin{align*}
\KH^{0, s(L(D, 2n))\pm 1}(L(D, 2n))\neq 0. 
\end{align*}
From Proposition~$\ref{lem4}$ (I), we have $s(L(D, 2n))=-2$. 
\end{proof}
In \cite{concordance_doubled} Livingston and Naik showed Theorem~$\ref{living}$ below, which gives a relation between the values of the Rasmussen invariants of $L(D, 2t)$ and $L(D, 2t+1)$. 
\begin{defn}
We call an invariant $\nu$ of a {\it Livingston-Naik type} if $\nu$ is an integer-valued additive knot invariant which bounds the smooth 4-genus of a knot and coincides with the $4$-ball genera of positive torus knots, that is, 
\begin{itemize}
\item $\nu$ is a homomorphism from the smooth knot concordance group $\mathcal{C}$ to $\mathbf{Z}$, 
\item $|\nu(K)|\leq g_{4}(K)$, where $g_{4}(K)$ is the $4$-genus of a knot $K$, 
\item $\nu(T_{p, q})=(p-1)(q-1)/2$, where $p$ and $q$ are coprime integers. 
\end{itemize}
\end{defn}
\begin{rem}
For example the Ozsv{\' a}th-Szab{\' o} invariant $\tau$ and a half of the Rasmussen invariant $s/2$ are Livingston-Naik type invariants. 
\end{rem}
\begin{thm}[{\cite[Theorem~$1$]{concordance_doubled}}]\label{living}
Let $\nu$ be a Livingston-Naik type invariant. 
If $\nu(L(D, 2t))=\pm 1$, then $\nu(L(D, 2t+1))=0$. 
\end{thm}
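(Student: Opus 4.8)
The plan is to deduce the statement from two geometric features of the doubles $L(D,q)$ together with the defining axioms of a Livingston--Naik type invariant; the Khovanov-homological input of the preceding sections plays no role here. First I would record the purely algebraic consequences of the axioms. Since $\nu$ is a homomorphism on $\mathcal{C}$ one has $\nu(U)=0$ (as $\nu(U)=\nu(U\#U)=2\nu(U)$) and $\nu(\overline{K})=-\nu(K)$ for the mirror $\overline{K}$, the concordance inverse of $K$. Moreover, whenever two knots $K$ and $K'$ cobound a smoothly embedded genus $g$ cobordism in $S^{3}\times[0,1]$, capping one end shows that $K\#\overline{K'}$ bounds a genus $g$ surface in $B^{4}$, so $|\nu(K)-\nu(K')|\le g_{4}(K\#\overline{K'})\le g$.

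The two geometric facts I would extract from the diagram in Figure~\ref{double} are: (i) changing a single crossing of the clasp turns $L(D,q)$ into the unknot, and the sign of that crossing alternates with the parity of $q$, because each half-twist inserted below the clasp interchanges the two antiparallel strands and hence flips the effective clasp sign; and (ii) $L(D,q)$ and $L(D,q+1)$ differ by the insertion of one half-twist, which is realised by two saddle moves and therefore by a genus one cobordism, so that $|\nu(L(D,q))-\nu(L(D,q+1))|\le 1$.

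Granting (i), the unknotting crossing change together with the \emph{signed} crossing change inequality for $\nu$ (for a Livingston--Naik type invariant the one-sided part of this bound is where the torus-knot normalisation is needed) pins down the range of $\nu$ by parity: writing $\varepsilon\in\{\pm1\}$ for the sign of the clasp crossing when $q$ is even, one gets $\nu(L(D,2t))$ lying between $0$ and $\varepsilon$, and $\nu(L(D,2t+1))$ lying between $0$ and $-\varepsilon$. Now suppose $\nu(L(D,2t))=\pm1$. Since its range is $\{0,\varepsilon\}$, necessarily $\nu(L(D,2t))=\varepsilon$, while $\nu(L(D,2t+1))\in\{0,-\varepsilon\}$. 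If the latter equalled $-\varepsilon$ we would have $|\nu(L(D,2t))-\nu(L(D,2t+1))|=|\varepsilon-(-\varepsilon)|=2$, contradicting the step bound (ii). Hence $\nu(L(D,2t+1))=0$, which is the assertion. The two signs $\pm1$ are handled uniformly by this argument, and in any case are interchanged by the mirror relation $L(D,q)=\overline{L(-D,\,1-q)}$ of Figure~\ref{double} together with $\nu(\overline{K})=-\nu(K)$.

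The substantive work, and the step I expect to be the main obstacle, is the verification of (i): that a single clasp crossing change unknots $L(D,q)$ for every $q$, and that the sign of that crossing genuinely depends on the parity of $q$. This is a concrete but delicate diagrammatic claim about the interplay of the clasp with the twist region, and it is precisely what makes the even and odd cases asymmetric and forces the conclusion. A secondary point requiring care is the \emph{signed} (as opposed to merely absolute) crossing change inequality for an abstract $\nu$: the bound $|\Delta\nu|\le1$ is immediate from $g_{4}$, but the one-sidedness that forces $\nu(L(D,2t))$ onto the correct side of $0$ must be argued from the normalisation axiom, for instance by exhibiting a positive (respectively negative) cobordism from the double to an appropriate torus knot.
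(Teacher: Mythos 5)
The paper offers no proof of this statement to compare against: it is quoted verbatim from Livingston--Naik (\cite[Theorem~1]{concordance_doubled}), and the author uses it as a black box. Your reconstruction is, in outline, exactly the argument of that reference: (a) each of $L(D,2t)$ and $L(D,2t+1)$ is unknotted by a single clasp-crossing change, and the relevant crossings have opposite signs, so the signed crossing-change inequality confines $\nu(L(D,2t))$ to $\{0,\varepsilon\}$ and $\nu(L(D,2t+1))$ to $\{0,-\varepsilon\}$; (b) the two knots cobound a genus-one cobordism, so the values differ by at most $1$; (c) if the first value is $\pm 1$ it equals $\varepsilon$, and the second cannot be $-\varepsilon$, hence is $0$. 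All three steps are sound. For (b), a clean way to see the cobordism inside this paper's own setup is that $L(D,2t)$ and $L(D,2t+1)$ each admit a single saddle move to the common two-component link $D(2,2t+2f)$ (this is exactly the smoothing relation $H^{i,j}(L_{1})=H^{i-1,j-2}(D(2,q+2f))$, $H^{i,j}(L_{0})=H^{i-1,j-1}(D(2,q-1+2f))$ recorded before Lemma~\ref{lem3}), giving a connected two-saddle, genus-one cobordism; this is slightly more robust than arguing via ``inserting a half-twist,'' since a crossing change in the antiparallel twist region relates $L(D,q+1)$ to $L(D,q-1)$ and preserves parity. The two points you flag as the substantive work are indeed where the content lies: the one-sided crossing-change inequality for an abstract Livingston--Naik type invariant is itself a theorem (Livingston's \cite[Corollary~3]{living}, proved from the normalisation $\nu(T_{2,3})=1$ via cobordisms to torus knots), and the parity-dependence of the unknotting crossing's sign is the diagrammatic fact encoded in the paper's identification $D_{-}(K,t)=L(D,2t-2f)$, $D_{+}(K,t)=L(D,2t+1-2f)$. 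So your proposal is a faithful sketch of the original proof rather than a new route, with the two standard lemmas correctly isolated but not re-proved.
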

\begin{rem}
In their paper, Livingston and Naik use the notation $D_{-}(K, t)$ and $D_{+}(K, t)$ instead of $L(D, 2t-2f)$ and $L(D, 2t+1-2f)$ respectively. 
\end{rem}
\par
Theorem~$\ref{living}$ does not determine the value of the Rasmussen invariant of a twisted Whitehead double of a knot. 
From Theorem~$\ref{living}$ and Corollary~$\ref{cor5-1}$, we can compute the Rasmussen invariants of twisted Whitehead doubles of any knot with sufficiently many twists.
\begin{cor}\label{ras_double}
For any $n>l$, we have 
\begin{align*}
s(L(D, 2n))&=-2, \\
s(L(D,2n+1))&=0,\\
s(L(D,-2n))&=0, \\
s(L(D,-2n+1))&=2.
\end{align*}
\end{cor}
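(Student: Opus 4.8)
The plan is to assemble the four equalities from two ingredients: the base computation $s(L(D,2n))=-2$ (Corollary~\ref{cor5-1}) and the Livingston--Naik obstruction (Theorem~\ref{living}), together with the behaviour of the Rasmussen invariant under mirror images and the sign conventions fixed in the definition $L(D,-q):=$ the mirror of $L(-D,-q+1)$. First I would recall that $s$ is a Livingston--Naik type invariant (indeed $s/2$ is, by the remark after the definition, so $s$ satisfies the hypotheses of Theorem~\ref{living} up to the factor $2$), and that under mirroring $s(\overline{K})=-s(K)$.

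The first line $s(L(D,2n))=-2$ is exactly Corollary~\ref{cor5-1}, valid for $n>l$. For the second line, apply Theorem~\ref{living} with $\nu=s/2$: since $\tfrac{1}{2}s(L(D,2n))=-1$, the theorem gives $\tfrac{1}{2}s(L(D,2n+1))=0$, hence $s(L(D,2n+1))=0$. Here I must check that the indexing in Theorem~\ref{living} matches, i.e.\ that taking $2t=2n$ yields $2t+1=2n+1$; the remark preceding the corollary identifies $D_-(K,t)=L(D,2t-2f)$ and $D_+(K,t)=L(D,2t+1-2f)$, so the pair $(L(D,2t),L(D,2t+1))$ with a common first index is precisely the pair to which the theorem applies.

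For the two negative-twist lines I would use the mirror convention. By definition $L(D,-2n)$ is the mirror image of $L(-D,-(-2n)+1)=L(-D,2n+1)$, so that
\begin{align*}
s(L(D,-2n))=-\,s(L(-D,2n+1)).
\end{align*}
Now $-D$ is a diagram of the knot $\overline{K}$, still with $l$ crossings total, so Corollary~\ref{cor5-1} and the previous step apply verbatim to $-D$: $s(L(-D,2(n)))=-2$ and hence $s(L(-D,2n+1))=0$, giving $s(L(D,-2n))=0$. Similarly $L(D,-2n+1)$ is the mirror of $L(-D,-(-2n+1)+1)=L(-D,2n)$, whence
\begin{align*}
s(L(D,-2n+1))=-\,s(L(-D,2n))=-(-2)=2,
\end{align*}
again using Corollary~\ref{cor5-1} for the diagram $-D$.

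The routine parts are the mirror identity $s(\overline{K})=-s(K)$ and the bookkeeping with the $L(D,-q)$ convention. The main obstacle, and the point deserving the most care, is matching the index conventions between this paper's $L(D,q)$ notation and Livingston--Naik's $D_\pm(K,t)$ notation when invoking Theorem~\ref{living}, so that the hypothesis "$\nu(L(D,2t))=\pm1$" is genuinely verified (with $\nu=s/2$ and value $-1$) and the conclusion lands on the correct link $L(D,2t+1)$; I would spell this identification out explicitly before applying the theorem.
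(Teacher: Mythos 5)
Your proposal is correct and follows essentially the same route as the paper: the base case from Corollary~\ref{cor5-1}, Theorem~\ref{living} applied with $\nu=s/2$ to get the odd-twist case, and the mirror convention $L(D,-q)=\overline{L(-D,-q+1)}$ together with $s(\overline{K})=-s(K)$ for the negative-twist cases. Your extra care in matching the $L(D,q)$ and $D_\pm(K,t)$ indexing, and in noting that $-D$ still has $l$ crossings so the hypothesis $n>l$ persists, is a welcome addition but does not change the argument.
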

\begin{proof}
Let $-D$ be the mirror image of the diagram $D$. 
From Proposition~$\ref{lem4}$, we have $s(L(D, 2n))=-2$. Since $L(D, -2n+1)$ and the mirror image of $L(-D, 2n)$ are diagrams of the same knot, we obtain $s(L(D, -2n+1))=-s(L(-D, 2n))$. 
Since we can apply Proposition~$\ref{lem4}$ to $L(-D, 2n)$, we have $s(L(D, -2n+1))=-s(L(-D, 2n))=2$. 
It follows from Theorem~$\ref{living}$ that $s(L(D, 2n+1))=0=s(L(-D, 2n+1))$. Since $L(D, -2n)$ and the mirror image of $L(-D, 2n+1)$ are diagrams of the same knot, we have $s(L(D,-2n))=0$.
\end{proof}
We can rewrite Corollary~$\ref{ras_double}$ as follows. 
\begin{cor}\label{ras_double2}
For any knot $K$, we have $s(D_{+}(K, t))=0$ for $t>2l_{+}(K)$ and $s(D_{+}(K, t))=2$ for $t<-2l_{-}(K)$, where $l_{+}(K)=\min \{l_{+}(D)|\text{D is a diagram of }K\}$ and $l_{-}(K)=\min \{l_{-}(D)|\text{D is a diagram of }K\}$ {\rm (}see Figure~$\ref{graph1}${\rm )}. 
\end{cor}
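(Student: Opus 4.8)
The plan is to deduce Corollary~\ref{ras_double2} from Corollary~\ref{ras_double} by a change of variables, using the dictionary between the two parametrizations of twisted Whitehead doubles recorded in the Remark preceding Corollary~\ref{ras_double}. Recall that for a diagram $D$ of $K$ with writhe $f=l_{+}(D)-l_{-}(D)$ one has $D_{+}(K,t)=L(D,2t+1-2f)$, and that $D_{+}(K,t)$ depends only on the knot $K$ and the integer $t$, not on the chosen diagram $D$; this diagram-independence is exactly what will let us optimize over diagrams at the very end and state the thresholds in terms of the knot invariants $l_{\pm}(K)$.

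First I would establish $s(D_{+}(K,t))=0$ for $t>2l_{+}(K)$. Since $l_{+}(K)=\min\{l_{+}(D)\}$ is attained, I would fix a diagram $D_{0}$ of $K$ with $l_{+}(D_{0})=l_{+}(K)$ and write $l,f$ for the crossing data of $D_{0}$. Setting $n=t-f$, the identity $D_{+}(K,t)=L(D_{0},2t+1-2f)=L(D_{0},2n+1)$ holds, so it remains only to check that the hypothesis $n>l$ of Corollary~\ref{ras_double} follows from the assumed range. Indeed $n-l=(t-f)-l=t-(f+l)=t-2l_{+}(D_{0})=t-2l_{+}(K)>0$, using $f+l=2l_{+}$. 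The second line of Corollary~\ref{ras_double}, $s(L(D_{0},2n+1))=0$, then gives the claimed value.

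Next I would prove $s(D_{+}(K,t))=2$ for $t<-2l_{-}(K)$ by the symmetric argument. Fixing a diagram $D_{1}$ with $l_{-}(D_{1})=l_{-}(K)$ and again writing $l,f$ for its crossing data, I would put $n=f-t$, so that $D_{+}(K,t)=L(D_{1},2t+1-2f)=L(D_{1},-2n+1)$. The hypothesis $n>l$ now reads $f-t>l$, i.e.\ $t<f-l=-2l_{-}(D_{1})=-2l_{-}(K)$, which is precisely the assumed range. Applying the fourth line of Corollary~\ref{ras_double}, namely $s(L(D_{1},-2n+1))=2$, completes the argument.

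I expect the only real subtlety to be the bookkeeping in these two substitutions: one must keep straight that the shift $-2f$ in $L(D,2t+1-2f)$ converts the blackboard-framed twisting parameter into the zero-framed one, so that after the substitution the hypothesis $n>l$ of Corollary~\ref{ras_double} coincides exactly with the stated thresholds under the identities $f+l=2l_{+}$ and $f-l=-2l_{-}$. Once this is pinned down, the passage from a single diagram to the knot invariants $l_{\pm}(K)$ is immediate, since $D_{+}(K,t)$ is diagram-independent and so evaluating $s$ on the $l_{\pm}$-minimizing diagram produces a bound depending only on $K$.
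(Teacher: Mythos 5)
Your proof is correct and takes essentially the same route the paper intends: Corollary~\ref{ras_double2} is presented there as a direct rewriting of Corollary~\ref{ras_double} via the dictionary $D_{+}(K,t)=L(D,2t+1-2f)$, and your substitutions $n=t-f$ and $n=f-t$, combined with the identities $f+l=2l_{+}(D)$ and $f-l=-2l_{-}(D)$ and the choice of an $l_{\pm}$-minimizing diagram, supply exactly the bookkeeping the paper leaves implicit.
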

\begin{figure}[!h]
\begin{center}
\includegraphics[scale=0.4]{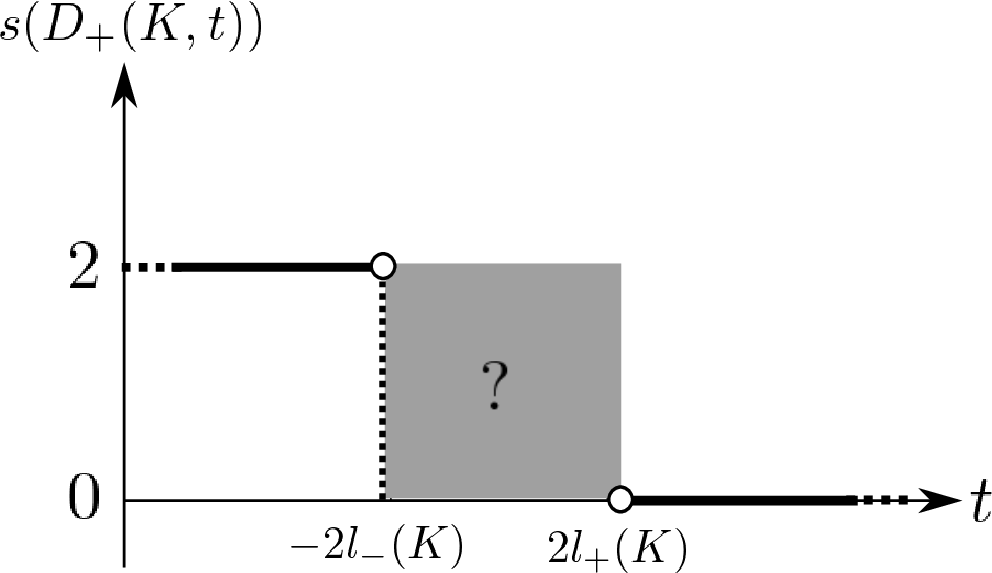}
\end{center}
\caption{$s(D_{+}(K, t))$. }
\label{graph1}
\end{figure}
\begin{rem}
Note that we use a relation between the Khovanov homology and the Rasmussen invariant $s$ in Corollary~$\ref{ras_double2}$ (or Corollary~$\ref{ras_double}$) . 
We do not know whether another Livingston-Naik type invariant satisfies Corollary~$\ref{ras_double2}$ or not. 
\end{rem}
We only compute the Khovanov homology groups of a twisted Whitehead double of any knot with sufficiently many twists. 
Since the Rasmussen invariant $s$ is obtained from the Lee homology, the estimation in Corollary~$\ref{ras_double2}$ may not be sharp. 
Livingston and Naik \cite{concordance_doubled} showed the following theorem which is similar to Corollary~$\ref{ras_double2}$. 
%
\begin{thm}[{\cite[Theorem~$2$]{concordance_doubled}}]\label{living2}
Let $\nu$ be a Livingston-Naik type invariant. 
For each knot $K$, we have $\nu(D_{+}(K, t))=1$ for $t\leq \operatorname{TB}(K)$ and $\nu(D_{+}(K, t))=0$ for $t\geq  -\operatorname{TB}(-K)$, where $\operatorname{TB}(K)$ is the maximal Thurston-Bennequin number of a knot $K$ and $-K$ is the mirror image of $K$. 
\end{thm}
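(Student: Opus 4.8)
The plan is to squeeze $\nu(D_{+}(K,t))$ between the universal bound coming from the $4$--genus and a lower bound coming from Legendrian geometry, the two regimes being separated by the Thurston--Bennequin number. First I would record the elementary input: $D_{+}(K,t)$ bounds an obvious genus--one Seifert surface, so $g_{4}(D_{+}(K,t))\le 1$ and the second defining property of $\nu$ gives $|\nu(D_{+}(K,t))|\le 1$, hence $\nu(D_{+}(K,t))\in\{-1,0,1\}$ for every $t$. It is important to note that mirroring sends the positively clasped double to a \emph{negatively} clasped one, $-D_{+}(K,t)=D_{-}(-K,-t)$; this is why the two regimes of the theorem produce the values $1$ and $0$ rather than $1$ and $-1$, and it means the two halves cannot be deduced from one another by symmetry and must both be treated directly.

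The analytic heart of the argument is a slice--Bennequin inequality valid for every Livingston--Naik type invariant,
\begin{equation*}
\operatorname{TB}(J)\le 2\nu(J)-1\qquad\text{for every knot }J.
\end{equation*}
I would try to extract this from the three axioms as follows: a Legendrian representative of $J$ realizing $\operatorname{TB}(J)$ bounds, after Rudolph, a quasipositive cobordism of genus $g$ from a positive torus knot $T_{p,q}$, with $2g=(p-1)(q-1)-\bigl(\operatorname{TB}(J)+1\bigr)$; since $\nu$ is a concordance homomorphism bounded by $g_{4}$ one has $|\nu(T_{p,q})-\nu(J)|\le g$, and substituting the torus value $\nu(T_{p,q})=(p-1)(q-1)/2$ gives $\nu(J)\ge(\operatorname{TB}(J)+1)/2$. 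I expect this to be the main obstacle. The genus axiom by itself is symmetric in $\nu\mapsto-\nu$ and can never force $\nu$ to a definite value, so all of the sharpness must enter through this positivity/torus input; moreover the quasipositive cobordism above is only directly available when $J$ is itself quasipositive, which is exactly the situation in the first regime but not in the second, where the value is the ``neutral'' $0$. For the concrete invariants $\tau$ and $s/2$ the inequality is a known theorem, and for a general axiomatic $\nu$ this is the step that would require the most care.

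With the inequality available, the theorem reduces to computing maximal Thurston--Bennequin numbers of the doubles. Starting from a Legendrian representative $\mathcal{K}$ of $K$ with $\operatorname{tb}(\mathcal{K})=\operatorname{TB}(K)$, whose contact framing equals $\operatorname{TB}(K)$, I would form the Legendrian Whitehead double by taking the Legendrian push--off, adding a positive clasp, and inserting positive stabilizations to lower the twisting; a front computation then shows this realizes $D_{+}(K,t)$ with $\operatorname{tb}=1$ for every $t\le\operatorname{TB}(K)$, so that $\operatorname{TB}(D_{+}(K,t))\ge 1$ in that range. The analogous construction applied to a maximal Legendrian representative of $-K$ produces $\operatorname{TB}\bigl(-D_{+}(K,t)\bigr)\ge -1$ whenever $t\ge-\operatorname{TB}(-K)$, and the positive clasp alone always yields a front with $\operatorname{tb}\ge -1$, so $\operatorname{TB}(D_{+}(K,t))\ge -1$ for all $t$.

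Finally I would assemble the pieces through the inequality. For $t\le\operatorname{TB}(K)$ we get $1\le\operatorname{TB}(D_{+}(K,t))\le 2\nu(D_{+}(K,t))-1$, so $\nu(D_{+}(K,t))\ge 1$, and together with $\nu\le 1$ from the genus this forces $\nu(D_{+}(K,t))=1$. For $t\ge-\operatorname{TB}(-K)$, applying the inequality to the mirror gives $-1\le\operatorname{TB}(-D_{+}(K,t))\le 2\nu(-D_{+}(K,t))-1=-2\nu(D_{+}(K,t))-1$, hence $\nu(D_{+}(K,t))\le 0$; applying it to $D_{+}(K,t)$ itself together with $\operatorname{TB}(D_{+}(K,t))\ge -1$ gives $\nu(D_{+}(K,t))\ge 0$, and the two bounds force $\nu(D_{+}(K,t))=0$. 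The parity relation of Theorem~\ref{living} then serves only as a consistency check, the values already being determined.
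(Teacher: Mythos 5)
This statement is quoted from Livingston--Naik (\cite{concordance_doubled}); the paper under review gives no proof of it, so I compare your proposal with the original argument. Your treatment of the first regime is correct and is essentially the published route: the slice--Bennequin inequality $\operatorname{TB}(J)\le 2\nu(J)-1$ does hold for every Livingston--Naik type invariant (this is Livingston's theorem, proved exactly by the Rudolph quasipositive-cobordism argument you sketch, and Rudolph's construction applies to an arbitrary Legendrian front, not only to quasipositive $J$ --- so the obstacle you flag there is not actually one), and the Legendrian satellite construction gives $\operatorname{TB}(D_{+}(K,t))\ge 1$ for $t\le\operatorname{TB}(K)$, which combined with $|\nu|\le g_{4}\le 1$ forces $\nu=1$.

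The second regime, however, contains a genuine gap: both Thurston--Bennequin lower bounds you invoke are false. Take $K=U$ and $t=1$, which lies in the range $t\ge-\operatorname{TB}(-U)=1$: then $D_{+}(U,1)$ is the figure-eight knot, which is amphichiral with maximal Thurston--Bennequin number $-3$, so neither $\operatorname{TB}(D_{+}(K,t))\ge-1$ (claimed for all $t$) nor $\operatorname{TB}(-D_{+}(K,t))\ge-1$ (claimed for $t\ge-\operatorname{TB}(-K)$) holds. The reason is that realizing framings $t>\operatorname{TB}(K)$, or a \emph{negative} clasp, in a Legendrian front costs stabilizations, so $\operatorname{TB}$ of these doubles drops without bound and the slice--Bennequin inequality is far from sharp for them; it can therefore never produce the upper bound $\nu\le 0$. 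The actual argument of Livingston--Naik replaces this step by crossing-change monotonicity, itself a formal consequence of the three axioms via the same slice--Bennequin machinery: if $K_{+}$ is obtained from $K_{-}$ by changing a negative crossing to a positive one, then $\nu(K_{-})\le\nu(K_{+})\le\nu(K_{-})+1$. Applying this at the clasp crossing, $D_{+}(K,t)$ is obtained from the unknot by such a change, giving $\nu(D_{+}(K,t))\ge 0$ for all $t$; and $D_{+}(K,t)$ is obtained from $D_{-}(K,t)=-D_{+}(-K,-t)$ by such a change, where for $t\ge-\operatorname{TB}(-K)$ the first part yields $\nu(D_{+}(-K,-t))=1$ and hence $\nu(D_{-}(K,t))=-1$, so $\nu(D_{+}(K,t))\le 0$. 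You need to replace your $\operatorname{TB}$ estimates by this monotonicity argument for the second half to go through.
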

\begin{figure}[!h]
\begin{center}
\includegraphics[scale=0.4]{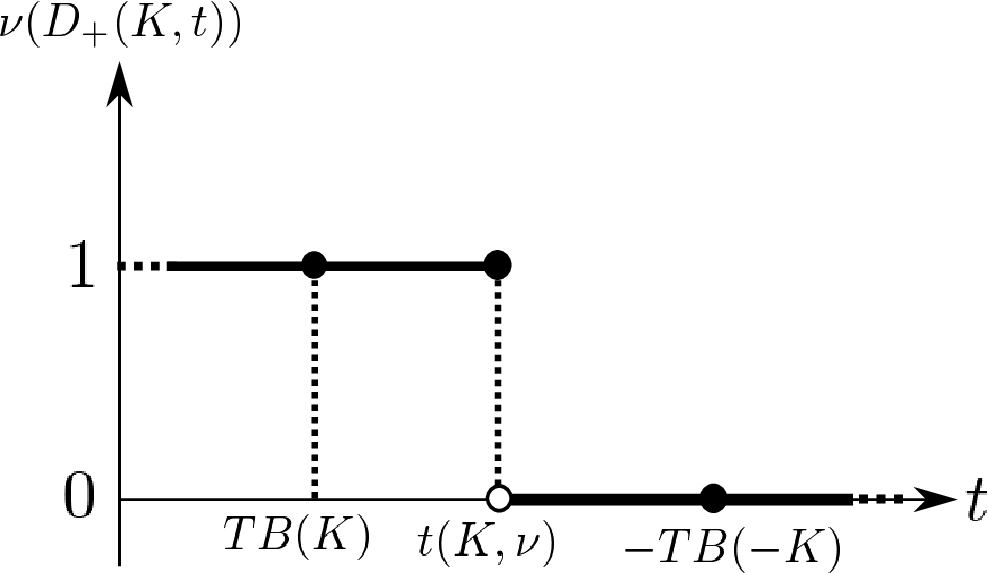}
\end{center}
\caption{$\nu(D_{+}(K, t))$. }
\label{graph2}
\end{figure}
\begin{figure}[!h]
\begin{center}
\includegraphics[scale=0.4]{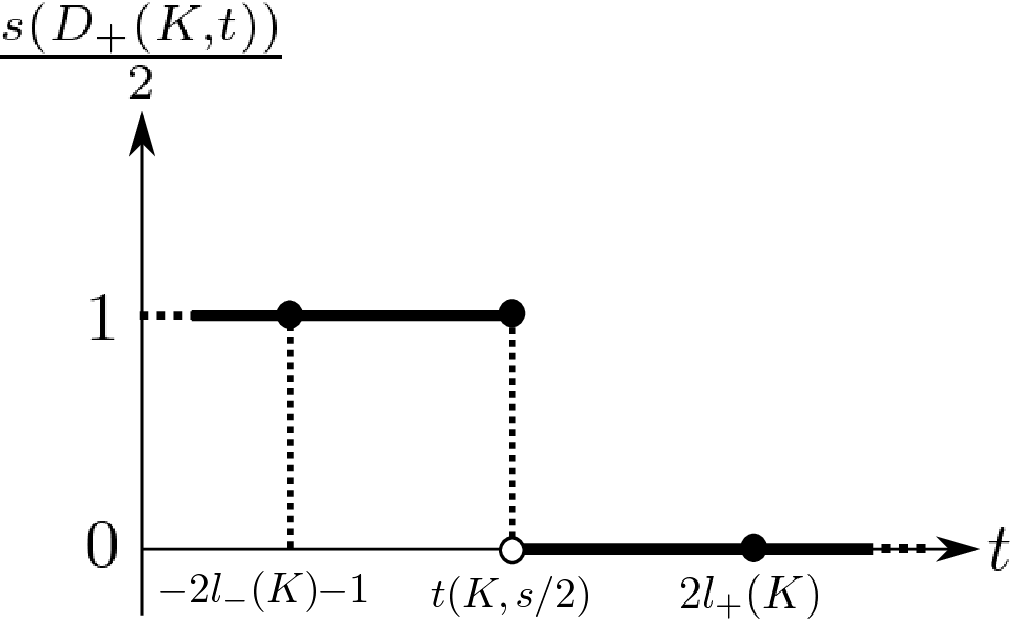}
\end{center}
\caption{$s(D_{+}(K, t))/2$. }
\label{graph3}
\end{figure}
\begin{rem}\label{remark1}
For any Livingston-Naik type invariant $\nu$ and knot $K$, 
Livingston and Naik show that $\nu(D_{+}(K,t))$ is a non-increasing function of $t$. 
Hence, there exists an integer $t(K, \nu)$ such that $\nu(D_{+}(K,t))=1$ for $t\leq t(K, \nu)$ and $\nu(D_{+}(K,t))=0$ for $t>t(K, \nu)$ (see \cite[Theorem~$2$]{concordance_doubled}). \par
From Theorem~$\ref{living2}$, for any Livingston-Naik type invariant $\nu$, we have $TB(K)\leq t(K,\nu)<-TB(-K)$ (Figure~$\ref{graph2}$). 
In particular, we obtain 
\begin{center}
$TB(K)\leq t(K,s/2)<-TB(-K)$. 
\end{center}
From Corollary~$\ref{ras_double2}$, we have 
\begin{center}
$-2l_{-}(K)-1\leq t(K, s/2)\leq 2l_{+}(K)$.  
\end{center}
See also Figure~$\ref{graph3}$. 
As far as the author knows, there is no relation between the maximal Thurston-Bennequin number and the positive or negative crossing number. 
However they have a similar property as above. 
\par
For the Ozsv{\' a}th-Szab{\' o} invariant $\tau$, it is known that $t(K,\tau)=2\tau(K)-1$ (see Theorem~$\ref{hedden}$ below). 
\end{rem}
\begin{example}
For the right-handed trefoil $T_{2,3}$, we have 
$l_{-}(T_{2,3})=0$, $l_{+}(T_{2,3})=3$, 
$\operatorname{TB}(T_{2,3})=1$ and $\operatorname{TB}(-T_{2,3})=-6$. 
From Theorem~$\ref{living2}$, we have $s(D_{+}(T_{2,3}, t))=2$ for $t\leq 1$ and $s(D_{+}(T_{2,3}, t))=0$ for $t\geq 6$. 
From Corollary~$\ref{ras_double2}$, we have $s(D_{+}(T_{2,3}, t))=2$ for $t\leq 1$ and $s(D_{+}(T_{2,3}, t))=0$ for $t\geq 7$. 
Hence, in this case, Theorem~$\ref{living2}$ implies Corollary~$\ref{ras_double2}$. 
However, in general, we do not know whether Theorem~$\ref{living2}$ implies Corollary~$\ref{ras_double2}$ or not. 
\end{example}
\begin{thm}[{\cite[Theorem~$1.4$]{hedden1}}]\label{hedden}
For any knot $K$, we have 
\begin{align*}
\tau(D_{+}(K, t))=
\begin{cases}
0& \text{if } t>2\tau(K)-1, \\
1& \text{if } t\leq 2\tau(K)-1. 
\end{cases}
\end{align*}
\end{thm}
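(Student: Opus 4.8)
The plan is to prove Theorem~\ref{hedden} by computing enough of the knot Floer homology $\widehat{HFK}(D_{+}(K,t))$ to pin down the Ozsv\'ath--Szab\'o concordance invariant $\tau$ as a function of the twisting parameter. Since $D_{+}(K,t)$ bounds an evident genus-one Seifert surface, its Seifert genus is at most $1$, so the standard inequality $|\tau|\le g_{4}\le g_{3}$ confines $\tau(D_{+}(K,t))\in\{-1,0,1\}$. Thus the statement reduces to locating, as $t$ varies, the transition between the values $1$ and $0$ (and to checking that $-1$ never occurs). The whole argument lives in Heegaard Floer homology rather than in the Khovanov-homology machinery developed earlier in the paper, so none of the earlier lemmas feed into it directly; the tools are the Alexander filtration on $\widehat{CF}(S^{3})$ induced by $K$ and the defining property $\tau(K)=\min\{s:H_{*}(\mathcal{F}(K,s))\to\widehat{HF}(S^{3})\text{ is onto}\}$.

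First I would establish monotonicity: $\tau(D_{+}(K,t))$ is non-increasing in $t$. A unit change of $t$ inserts one full twist in the doubling region, which is realized by a single crossing change; since $\tau$ varies by at most one under a crossing change and decreases weakly when a positive crossing is replaced by a negative one, the value can only stay the same or drop as $t$ grows. Together with the two-valuedness from the genus bound, this already forces the existence of a single threshold $t_{0}(K)$ with $\tau(D_{+}(K,t))=1$ for $t\le t_{0}(K)$ and $\tau(D_{+}(K,t))=0$ for $t>t_{0}(K)$; it also rules out $-1$ once either endpoint behaviour is known. So the entire content is the identification $t_{0}(K)=2\tau(K)-1$.

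The heart of the proof is therefore a computation of the top-Alexander-grading summand $\widehat{HFK}(D_{+}(K,t),1)$ in terms of the filtered chain-homotopy type of $\widehat{CFK}^{\infty}(K)$. Because the Whitehead pattern has winding number zero in the solid torus, the satellite formula expresses the generators of $\widehat{HFK}(D_{+}(K,t),1)$ through the homologies of the subquotient complexes $\mathcal{F}(K,j)/\mathcal{F}(K,j-1)$ of the Alexander filtration, with $t$ controlling which filtration levels contribute. One then reads off that the generator of $\widehat{HF}(S^{3})$ detecting $\tau$ survives into filtration level $1$ of $D_{+}(K,t)$ precisely when $t\le 2\tau(K)-1$; the factor $2$ reflects the winding-number-zero geometry of the doubling and the $-1$ comes from the clasp. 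Matching this surviving level against the defining property of $\tau(K)$ yields the threshold, and extracting $\tau(D_{+}(K,t))$ from $\widehat{HFK}(D_{+}(K,t),1)$ is then a bookkeeping exercise in the Maslov and Alexander gradings.

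The main obstacle is exactly this satellite computation: relating the Floer homology of a winding-number-zero satellite to the filtered invariant of the companion is genuinely delicate, because the Künneth and adjunction arguments available for nonzero winding number break down. Hedden surmounts it by running the integer-surgery exact sequences and carefully analyzing the maps $\widehat{HF}(S^{3}_{N}(K))\to\widehat{HF}(S^{3})$ for large $N$, which encode the filtration levels that the twisting parameter activates; a modern alternative is to compute $\widehat{HFK}(D_{+}(K,t))$ via bordered Floer homology, pairing the type-$D$ module of the twisted Whitehead pattern with $\widehat{CFD}$ of the companion's complement. Once $\widehat{HFK}(D_{+}(K,t),1)$ is under control, the monotonicity step and the grading bookkeeping finish the argument with no further difficulty.
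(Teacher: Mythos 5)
First, a point of orientation: the paper does not prove this statement at all. Theorem~\ref{hedden} is imported verbatim from Hedden's work (\cite[Theorem~1.4]{hedden1}) and is used only for comparison with Corollary~\ref{ras_double2}; there is no internal proof to match your attempt against. So the question is whether your sketch would stand on its own as a proof, and as written it does not. Your outer scaffolding is sound and does follow the architecture of the known argument: the genus-one Seifert surface confines $\tau(D_{+}(K,t))$ to $\{-1,0,1\}$ (in fact, changing the single negative clasp crossing to a positive one unknots $D_{+}(K,t)$, which pins $\tau$ to $\{0,1\}$ more directly than your genus-plus-endpoint route), and the crossing-change inequality for $\tau$ gives monotonicity in $t$, hence a single threshold $t_{0}(K)$.

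The genuine gap is that the entire content of the theorem --- the identification $t_{0}(K)=2\tau(K)-1$ --- is asserted rather than derived. Your third paragraph says that the satellite formula for the winding-number-zero pattern expresses $\widehat{HFK}(D_{+}(K,t),1)$ through the subquotients of the Alexander filtration of $K$, and that ``one then reads off'' that the relevant class survives precisely when $t\le 2\tau(K)-1$, with ``the factor $2$'' and ``the $-1$'' attributed to the geometry of the doubling and the clasp. But there is no such off-the-shelf satellite formula for winding number zero; establishing it is exactly the hard analytic core (in Hedden's proof, a doubly-pointed Heegaard diagram for the pattern in the $t$-framed large-$N$ surgery complement, the integer-surgery exact triangles, and a count of which summands $\widehat{HF}(S^{3}_{N}(K),[s])$ contribute to which Alexander grading of the double). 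Moreover, even granting the computation of the rank of $\widehat{HFK}(D_{+}(K,t),1)$, passing from that rank to the value of $\tau$ requires an additional argument (tracking Maslov gradings of the top summand, or comparing against the Euler characteristic forced by the Alexander polynomial of the double); ``a bookkeeping exercise'' does not discharge it. As it stands your text is a correct roadmap to Hedden's theorem, not a proof of it: every step that could fail is deferred to the very reference being proved.
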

%
%
%
%
\begin{rem}
The negative half of the knot signature $-\sigma/2$ is not of a Livingston-Naik type since $-\sigma(T_{p,q})/2$ is not equal to $(p-1)(q-1)/2$. 
However it has similar properties. 
We call such an invariant of a weak Livingston-Naik type (see Definition~$\ref{weak}$ below). 
\end{rem}
\begin{defn}\label{weak}
We call an invariant $\nu'$ of a {\it weak Livingston-Naik type} if $\nu'$ is an integer-valued additive knot invariant which bounds the smooth 4-genus of a knot and coincides with the $4$-ball genus of right-handed trefoil knot, that is, 
\begin{itemize}
\item $\nu'$ is a homomorphism from the smooth knot concordance group $\mathcal{C}$ to $\mathbf{Z}$, 
\item $|\nu'(K)|\leq g_{4}(K)$, where $g_{4}(K)$ is the $4$-genus of a knot $K$, 
\item $\nu'(T_{2, 3})=1$. 
\end{itemize}
\end{defn}
\begin{rem}
In \cite{abe_tetsuya1}, Abe calls the properties in Definition~$\ref{weak}$ the L-property. 
\end{rem}
\begin{rem}
For any Livingston-Naik type invariant $\nu$, we only use the properties in Definition~$\ref{weak}$ to prove that $\nu(D_{+}(K,t))$ is a non-increasing function of $t$. 
Hence, for any weak Livingston-Naik type invariant $\nu'$ and knot $K$, $\nu'(D_{+}(K,t))$ is a non-increasing function of $t$ and 
there exists an integer $t(K, \nu')$ such that $\nu'(D_{+}(K,t))=1$ for $t\leq t(K, \nu')$ and $\nu'(D_{+}(K,t))=0$ for $t>t(K, \nu')$ (see \cite[Theorem~$2$]{concordance_doubled} and \cite[Corollary~$3$]{living}). 
In particular, the negative half of the knot signature $\sigma$ is of a weak Livingston-Naik type and $t(K, -\sigma/2)=0$. 
\end{rem}
%
\section{Appendix}\label{appendix}
In this section, we prove Claims~$\ref{key_claim}$, $\ref{key_claim2}$ and $\ref{key_claim3}$ and Lemma~$\ref{lem1}$. 
\begin{proof}[Proof of Claim~$\ref{key_claim}$]
To prove Claim~$\ref{key_claim}$, we consider the diagram $E^{m}(2k, 2k(n+f)+j)$. 
If we slide an arc (which is like a ``cap" illustrated in the following figures) of $E^{m}(2k, 2k(n+f)+j)$, the diagram $E^{m}(2k, 2k(n+f)+j)$ may change to one of the four diagrams depicted in Figures~$\ref{case1}$, $\ref{case2}$, $\ref{case3}$ and $\ref{case4}$.
If $E^{m}(2k, 2k(n+f)+j)$ changes to the diagram depicted in Figure~$\ref{case3}$, then 
we continue the isotopic moves as depicted in Figure~$\ref{case3-2}$. 
Similarly, if $E^{m}(2k, 2k(n+f)+j)$ changes to the diagram depicted in Figure~$\ref{case4}$, then we continue the isotopic moves as depicted in Figure~$\ref{case4-2}$. 
No matter in which of the four cases, there are an $h\in\{1, \dots, 2k-2x\}$, an $x\in\{1, \dots, k\}$, an $s\in\{1, \dots, 2k-2x-1\}$ and an $\varepsilon\in\{0, 1\}$ such that $E^{m}(2k, 2k(n+f)+j)$ is equivalent to $D^{s}(2k-2x, (2k-2x)(n+f)+h)\sqcup U_{\varepsilon}$, where $U_{0}$ is a circle in the plane and $U_{1}$ is the empty set. 
We give $E^{m}(2k, 2k(n+f)+j)$ an orientation such that all crossings of $D^{s}(2k-2x, (2k-2x)(n+f)+h)\sqcup U_{\varepsilon}$ are positive. 
We call the diagram $E^{m}(2k, 2k(n+f)+j)$ is of type-$1$, type-$2$, type-$3$ and type-$4$ if it changes to the positive diagram as in Figures~$\ref{case1}$, $\ref{case2}$, $\ref{case3-2}$ and $\ref{case4-2}$, respectively.  
\begin{figure}[!h]
\begin{center}
\includegraphics[scale=0.53]{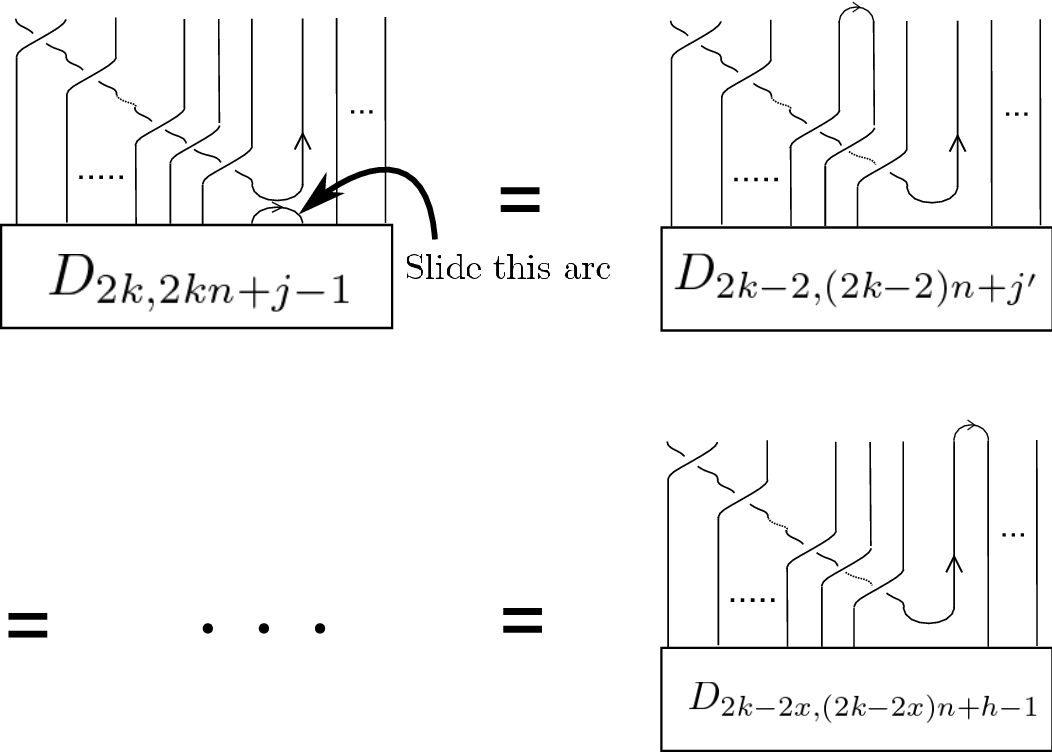}
\end{center}
\caption{The diagram $E^{m}(2k, 2k(n+f)+j)$ can be changed to a positive diagram $D^{s}(2k-2x, (2k-2x)(n+f)+h)\sqcup U_{\varepsilon}$ (type-$1$). }
\label{case1}
\end{figure}
\begin{figure}[!h]
\begin{center}
\includegraphics[scale=0.53]{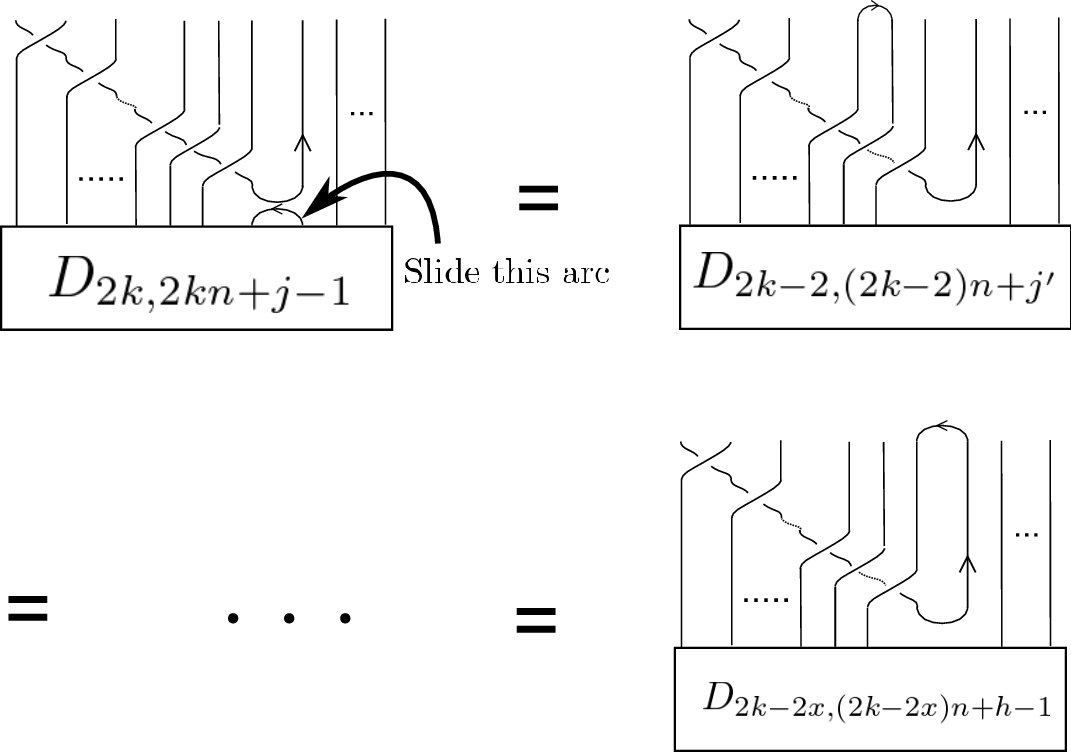}
\end{center}
\caption{The diagram $E^{m}(2k, 2k(n+f)+j)$ can be changed to a positive diagram $D^{s}(2k-2x, (2k-2x)(n+f)+h)\sqcup U_{\varepsilon}$ (type-$2$). }
\label{case2}
\end{figure}
\begin{figure}[!h]
\begin{center}
\includegraphics[scale=0.68]{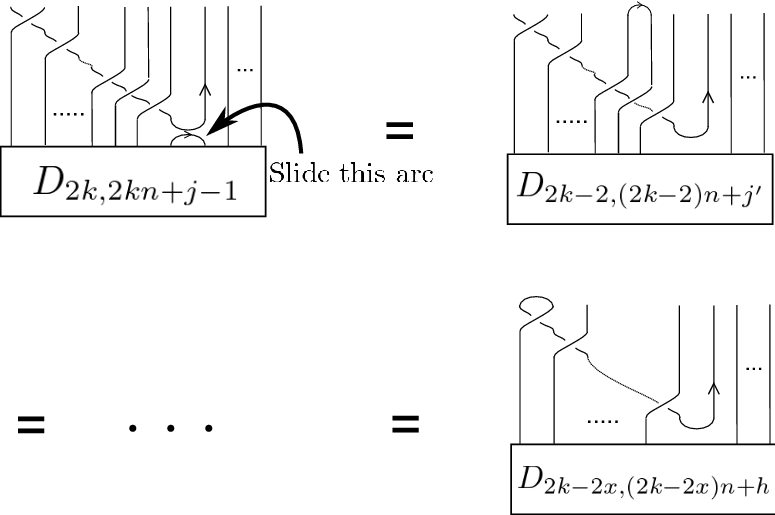}
\end{center}
\caption{The diagram $E^{m}(2k, 2k(n+f)+j)$ can be changed to a diagram (type-$3$). }
\label{case3}
\end{figure}
\begin{figure}[!h]
\begin{center}
\includegraphics[scale=0.53]{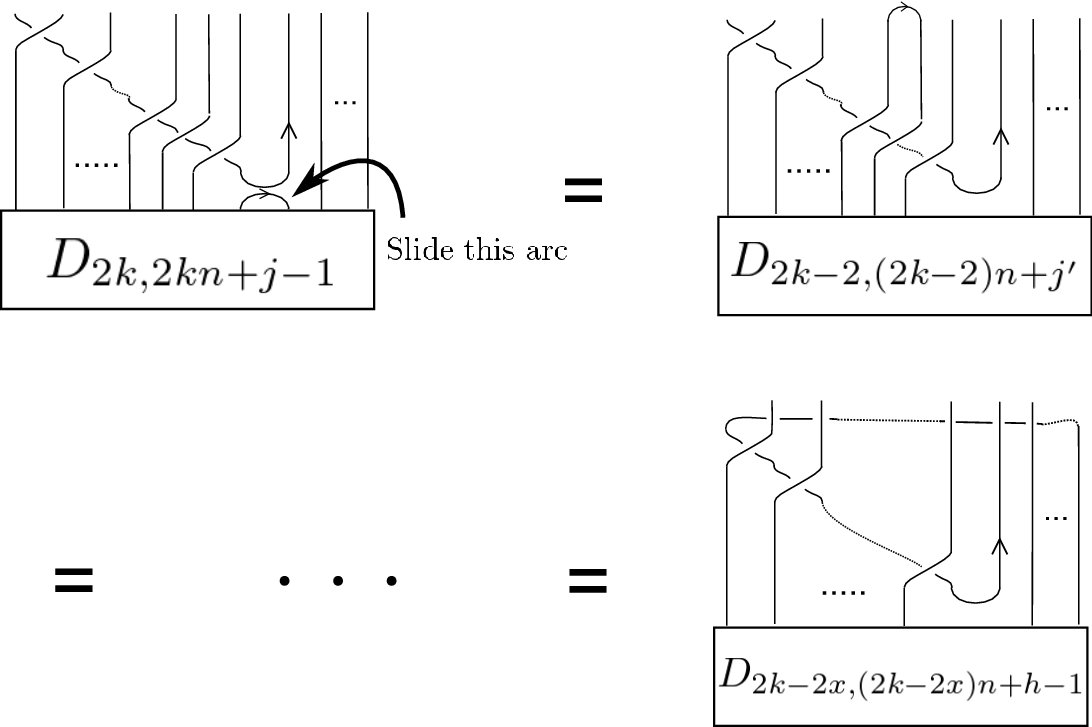}
\end{center}
\caption{The diagram $E^{m}(2k, 2k(n+f)+j)$ can be changed to a diagram (type-$4$). }
\label{case4}
\end{figure}
\begin{figure}[!h]
\begin{center}
\includegraphics[scale=0.6]{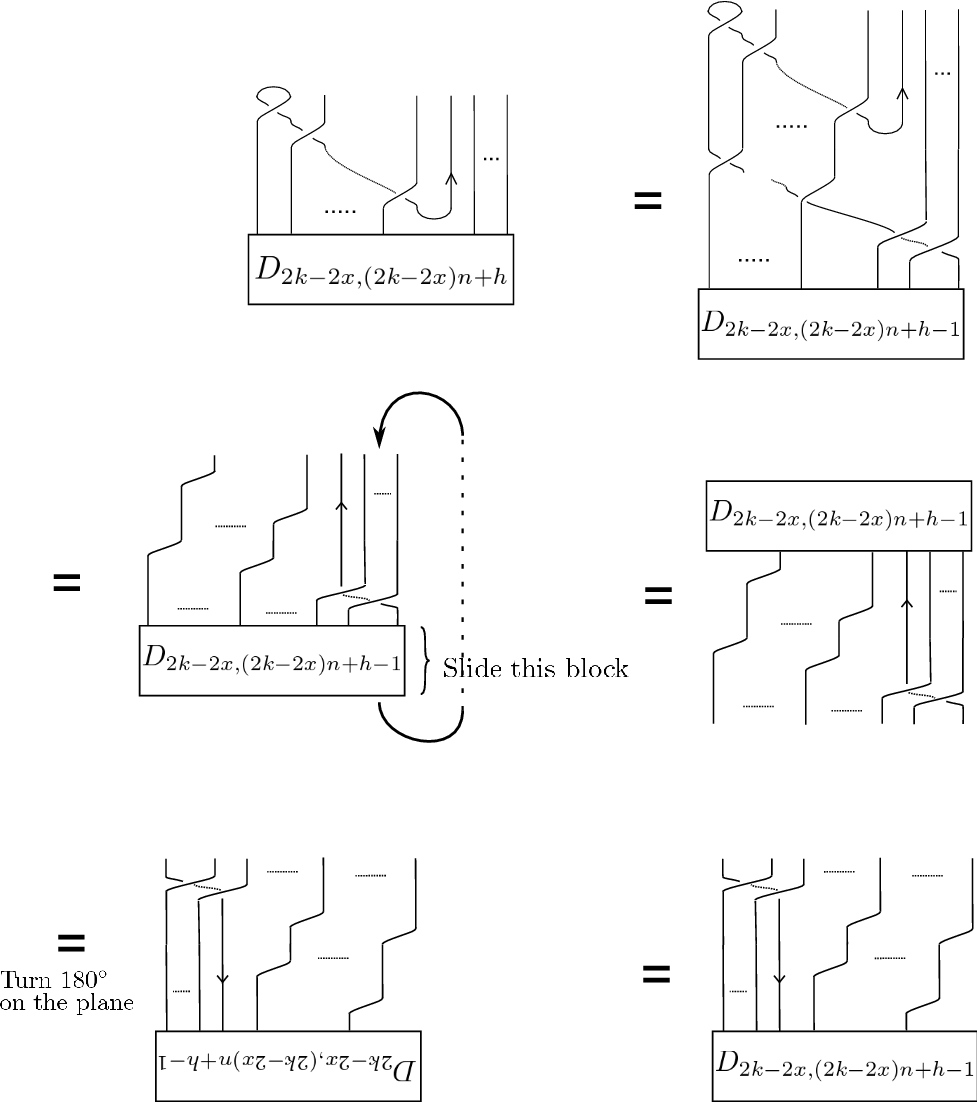}
\end{center}
\caption{The diagram $E^{m}(2k, 2k(n+f)+j)$ can be changed to a positive diagram $D^{s}(2k-2x, (2k-2x)(n+f)+h)\sqcup U_{\varepsilon}$ (type-$3$). }
\label{case3-2}
\end{figure}
\begin{figure}[!h]
\begin{center}
\includegraphics[scale=0.5]{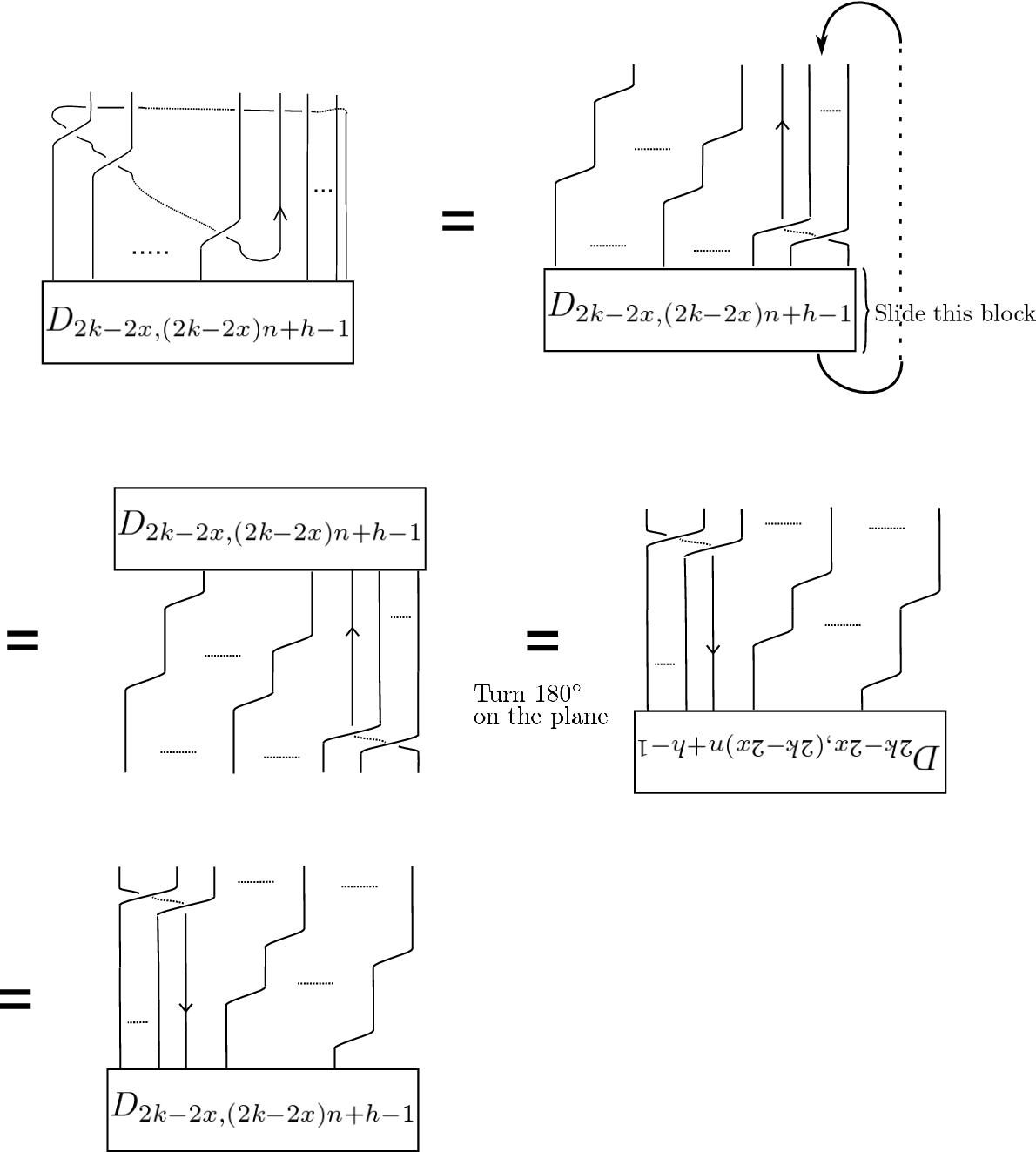}
\end{center}
\caption{The diagram $E^{m}(2k, 2k(n+f)+j)$ can be changed to $D^{s}(2k-2x, (2k-2x)(n+f)+h)\sqcup U_{\varepsilon}$ (type-$4$). }
\label{case4-2}
\end{figure}
\par
Now we have supposed that for $1\leq g<k$, $j=1, \dots, 2g$ and $m=0, \dots, 2g-1$ we have $H^{i}(D^{m}(2g, 2g(n+f)+j))=0$ if 
$i>2g^{2}(n-l+1)+l(2g)^{2}$ and $n\geq l$, or $i>l(2g)^{2}$ and $n<l$ (recall the induction hypothesis in the proof of Lemma~$\ref{lem_cable}$ $(1)$).  
From this induction hypothesis, if $i-n_{-}+l_{-}(2k-2x)^{2}>2(k-x)^{2}(n-l+1)+l(2k-2x)^{2}$ and $n\geq l$, or $i-n_{-}+l_{-}(2k-2x)^{2}>l(2k-2x)^{2}$ and $n<l$, then we have 
\begin{align*}
&H^{i}(E^{m}(2k, 2k(n+f)+j))\\
&=\KH^{i-n_{-}}(D^{s}(2k-2x, (2k-2x)(n+f)+h)\sqcup U_{\varepsilon})\\
&=H^{i-n_{-}+l_{-}(2k-2x)^{2}}(D^{s}(2k-2x, (2k-2x)(n+f)+h)\sqcup U_{\varepsilon})
=0, 
\end{align*}
where $n_{-}$ is the number of the negative crossings of $E^{m}(2k, 2k(n+f)+j)$. 
Hence, to prove Claim~$\ref{key_claim}$, it is sufficient to prove the following: 
\begin{align}
l(2k)^{2}+2k^{2}(n-l+1)-1&\geq 2(k-x)^{2}(n-l+1)\label{e1}\\
&\ \ \ +l_{+}(2k-2x)^{2}+n_{-}  \ \ (n\geq l), \nonumber\\
l(2k)^{2}-1&\geq l_{+}(2k-2x)^{2}+n_{-}  \ \ (n<l).\label{e2} 
\end{align}
To prove $(\ref{e1})$ and $(\ref{e2})$, 
we need to count the number of the negative crossings of $E^{m}(2k, 2k(n+f)+j)$. 
We first count its positive crossings by dividing it into four parts, part-$1$, part-$2$, part-$3$ and part-$4$ (see Figure~$\ref{4-parts}$). 
\begin{figure}[!h]
\begin{center}
\includegraphics[scale=0.45]{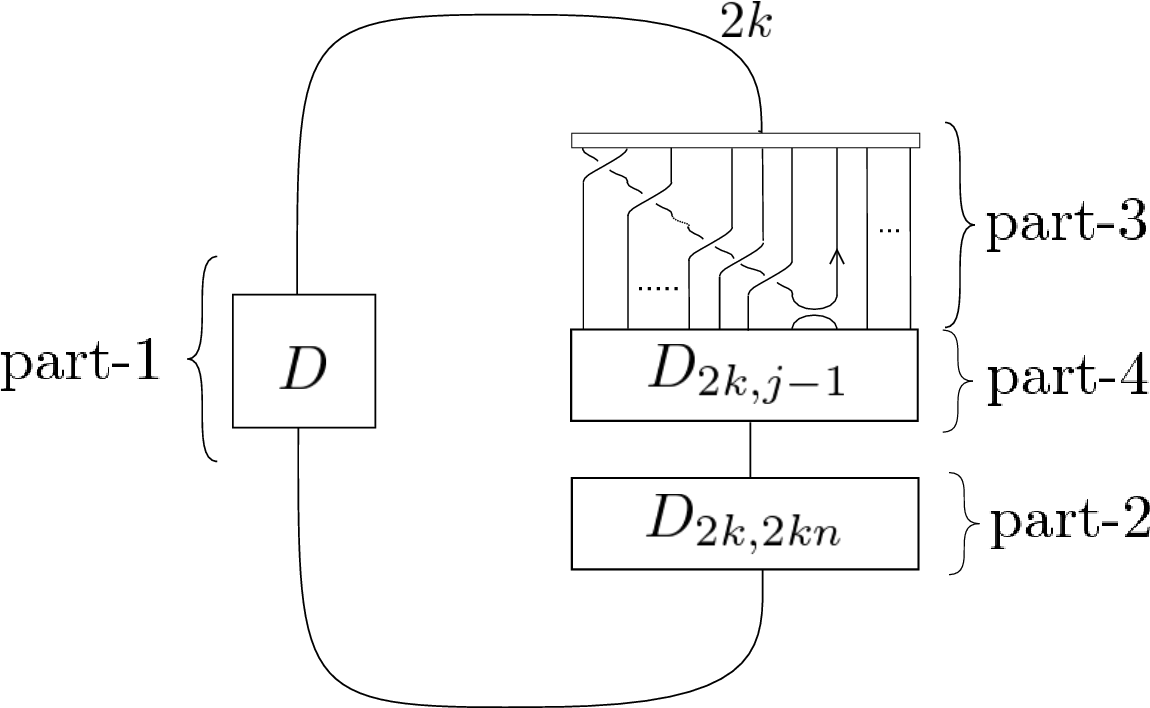}
\end{center}
\caption{The diagram $E^{m}(2k, 2k(n+f)+j)$ divided into four parts. }
\label{4-parts}
\end{figure}
\par
[Step~$1$] In the case where $E^{m}(2k, 2k(n+f)+j)$ is either type-$1$ or type-$2$:  
In part-$1$, we apply $\sum_{i=0}^{x-1}(l(2k-2i)+l(2k-2i-2))$ RII moves to $E^{m}(2k, 2k(n+f)+j)$ to obtain the diagram $D^{s}(2k-2x, (2k-2x)(n+f)+h)\sqcup U_{\varepsilon}$. 
Then $E^{m}(2k, 2k(n+f)+j)$ loses $\sum_{i=0}^{x-1}(l(2k-2i)+l(2k-2i-2))$ positive crossings. 
Moreover, $D^{s}(2k-2x, (2k-2x)(n+f)+h)\sqcup U_{\varepsilon}$ has $l_{+}(2k-2x)^{2}$ positive crossings in a part corresponding to part-$1$. 
Hence, in part-$1$, $E^{m}(2k, 2k(n+f)+j)$ has 
\begin{center}
$\displaystyle{\sum_{i=0}^{x-1}(l(2k-2i)+l(2k-2i-2))+l_{+}(2k-2x)^{2}}$
\end{center}
positive crossings. 
%
\par
In part-$2$, $E^{m}(2k, 2k(n+f)+j)$ has $x$ arcs directed upward and $2k-x$ arcs directed downward (see Figure~$\ref{part2-1}$). 
Hence, in part-$2$, $E^{m}(2k, 2k(n+f)+j)$ has 
$x(x-1)n+(2k-x)(2k-x-1)n$
positive crossings. 
\begin{figure}[!h]
\begin{center}
\includegraphics[scale=0.45]{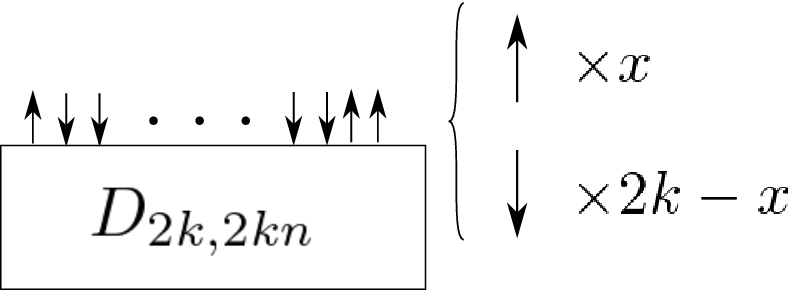}
\end{center}
\caption{If $E^{m}(2k, 2k(n+f)+j)$ is either type-$1$ or type-$2$, in part-$2$, $E^{m}(2k, 2k(n+f)+j)$ has $x$ arcs directed upward and $2k-x$ arcs directed downward. }
\label{part2-1}
\end{figure}
\par
\begin{figure}[!h]
\begin{center}
\includegraphics[scale=0.6]{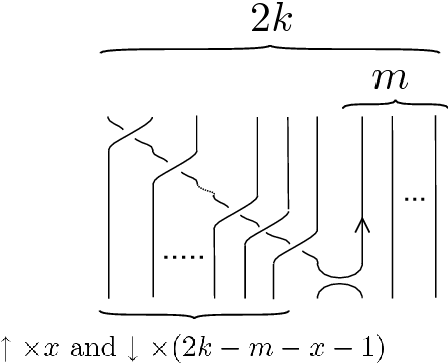}
\end{center}
\caption{If $E^{m}(2k, 2k(n+f)+j)$ is either type-$1$ or type-$2$, in part-$3$, $E^{m}(2k, 2k(n+f)+j)$ has at least $2k-m-1-x$ positive crossings. This figure is a minimal case. }
\label{part3-1}
\end{figure}
In part $3$, $E^{m}(2k, 2k(n+f)+j)$ has at least 
$2k-m-1-x$ 
positive crossings (see Figure~$\ref{part3-1}$). 
\par
In part-$4$, note that there are $x$ arcs directed upward and $2k-x$ arcs directed downward. 
Assume that $b$ is the number of the positions where the left most arc is directed upward and that $a$ is the number of the positions where the left most arc is directed downward  (see Figure~$\ref{part4}$). Note that $a+b=j-1$ and that $b\leq x$. 
\begin{figure}[!h]
\begin{center}
\includegraphics[scale=0.6]{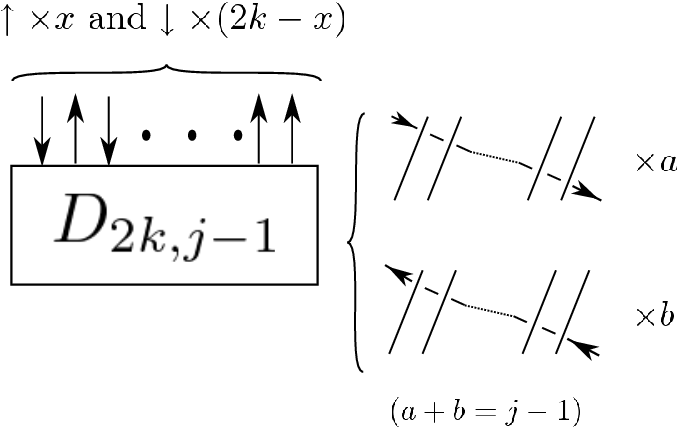}
\end{center}
\caption{In the case where the diagram $E^{m}(2k, 2k(n+f)+j)$ is type-$1$ or type-$2$. In part-$4$, $E^{m}(2k, 2k(n+f)+j)$ has $x$ arcs directed upward and $2k-x$ arcs directed downward. 
The number of the positions where the left most arc is directed upward is $b$. The number of the positions where the left most arc is directed downward is $a$. }
\label{part4}
\end{figure}
Then, in part-4, $E^{m}(2k, 2k(n+f)+j)$ has
\begin{center}
$b(x-1)+a(2k-x-1)=b(x-1)+(j-1-b)(2k-x-1)$ 
\end{center}
positive crossings.  
\par
Hence the diagram $E^{m}(2k, 2k(n+f)+j)$ has at least $X_{1}$ positive crossings, where 
\begin{align*}
X_{1}&=\sum_{i=0}^{x-1}(l(2k-2i)+l(2k-2i-2))+l_{+}(2k-2x)^2\\
&\ \ \ +x(x-1)n+(2k-x)(2k-x-1)n\\
&\ \ \ +2k-1-m-x\\
&\ \ \ +b(x-1)+(j-1-b)(2k-x-1). 
\end{align*}
From the above discussion $E^{m}(2k, 2k(n+f)+j)$ has at most $X_{2}$ negative crossings, 
where
\begin{align*}
X_{2}=l(2k)^{2}+(2k-1)(2kn+j)-m-X_{1}. 
\end{align*}
\par
Then for $j\neq 2k$ we can check the following. 
\begin{align*}
l(2k)^{2}+2k^{2}(n-l+1)-1&\geq 2(k-x)^{2}(n-l+1)+l_{+}(2k-2x)^{2}+X_{2}  \ (n\geq l), \\
l(2k)^{2}-1&\geq l_{+}(2k-2x)^{2}+X_{2}  \ (n<l). 
\end{align*}
\par
Indeed, we can compute $l(2k)^{2}+2k^{2}(n-l+1)-1-(2(k-x)^{2}(n-l+1)+l_{+}(2k-2x)^{2}+X_{2})=2(k-x)(x-b)+x(2k-j)-1$. 
We obtain $2(k-x)(x-b)+x(2k-j)-1\geq 0$ since $0<j<2k$, $b\leq x\leq k$ and $x\geq 1$. 
Similarly $l_{+}(2k-2x)^{2}+X_{2}\leq l(2k)^{2}-1$ for $j\neq 2k$. 
This implies that $(\ref{e1})$ and $(\ref{e2})$ are true if $j\neq 2k$ and $E^{m}(2k, 2k(n+f)+j)$ is either type-$1$ or type-$2$. 
\par
Finally we consider the case where $j=2k$. If $j= 2k$, then $x=1$ and $E^{m}(2k, 2k(n+f)+j)$ has $n_{-}=2(2k-1)(n+1)-1+2l_{+}(2k-1)+l_{-}((2k)^{2}-2(2k-1))$ negative crossings. 
%
%
In this case we have $l_{+}(2k-2)^{2}+2(k-1)^{2}(n-l+1)+n_{-}= l(2k)^{2}+2k^{2}(n-l+1)-1$. 
Similarly, in this case, we obtain $l(2k)^{2}-1\geq l_{+}(2k-2x)^{2}+n_{-}$ for $n<l$. 
These imply that $(\ref{e1})$ and $(\ref{e2})$ are true for $j=2k$.
\par
[Step~$2$] In the case where $E^{m}(2k, 2k(n+f)+j)$ is either type-$3$ or type-$4$: 
By the same discussion, in part-$1$, $E^{m}(2k, 2k(n+f)+j)$ has 
\begin{center}
$\displaystyle{\sum_{i=0}^{x-1}(l(2k-2i)+l(2k-2i-2))+l_{+}(2k-2x)^{2}}$
\end{center}
positive crossings. 
\par
In part-$2$, $E^{m}(2k, 2k(n+f)+j)$ has $2k-x$ arcs directed upward and $x$ arcs directed downward (see Figure~$\ref{part2-2}$). 
Hence, in part-$2$, $E^{m}(2k, 2k(n+f)+j)$ has 
$x(x-1)n+(2k-x)(2k-x-1)n$
positive crossings. 
\begin{figure}[!h]
\begin{center}
\includegraphics[scale=0.45]{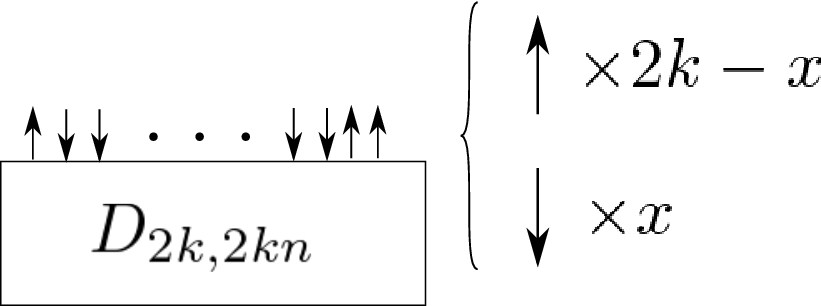}
\end{center}
\caption{If $E^{m}(2k, 2k(n+f)+j)$ is either type-$3$ or type-$4$, in part-$2$, $E^{m}(2k, 2k(n+f)+j)$ has $2k-x$ arcs directed upward and $x$ arcs directed downward. }
\label{part2-2}
\end{figure}
\par
In part-$3$, $E^{m}(2k, 2k(n+f)+j)$ may have no positive crossing. 
\par
In part-$4$, note that there are $2k-x$ arcs directed upward and $x$ arcs directed downward. 
Assume that $a$ is the number of the positions where the left most arc is directed upward and that $b$ is the number of the positions where the left most arc is directed downward  (see Figure~$\ref{part4-2}$). Note that $a+b=j-1$ and that $b< x$ (we have $b\neq x$ since in part-$4$ the left most bottom arc is directed downward ). 
\begin{figure}[!h]
\begin{center}
\includegraphics[scale=0.6]{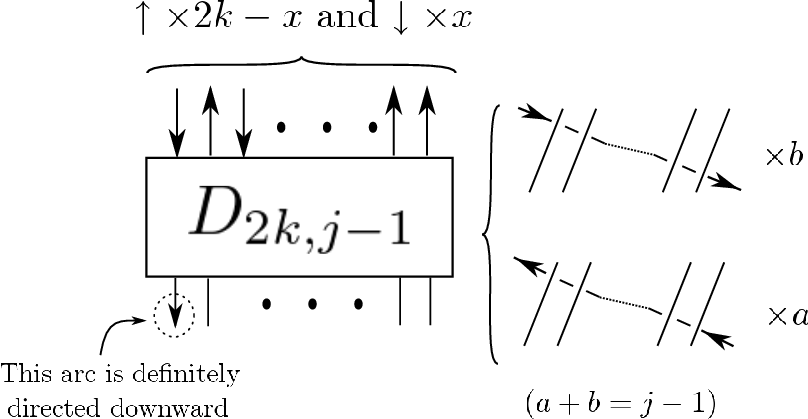}
\end{center}
\caption{In the case where the diagram $E^{m}(2k, 2k(n+f)+j)$ is type-$3$ or type-$4$. In part-$4$, $E^{m}(2k, 2k(n+f)+j)$ has $2k-x$ arcs directed upward and $x$ arcs directed downward. 
The number of the positions where the left most arc is directed upward is $a$. The number of the positions where the left most arc is directed downward is $b$. 
The left most bottom arc is directed downward since we give $E^{m}(2k, 2k(n+f)+j)$ such an orientation, (see Figures~$\ref{case3-2}$, $\ref{case4-2}$ or $\ref{4-parts}$). }
\label{part4-2}
\end{figure}
Then, in part-4, $E^{m}(2k, 2k(n+f)+j)$ has
\begin{center}
$b(x-1)+a(2k-x-1)=b(x-1)+(j-1-b)(2k-x-1)$ 
\end{center}
positive crossings.  
\par
Hence the diagram $E^{m}(2k, 2k(n+f)+j)$ has at least $X'_{1}$ positive crossings, where 
\begin{align*}
X'_{1}&=\sum_{i=0}^{x-1}(l(2k-2i)+l(2k-2i-2))+l_{+}(2k-2x)^2\\
&\ \ \ +x(x-1)n+(2k-x)(2k-x-1)n\\
&\ \ \ +b(x-1)+(j-1-b)(2k-x-1). 
\end{align*}
From the above discussion, $E^{m}(2k, 2k(n+f)+j)$ has at most $X'_{2}$ negative crossings, 
where
\begin{align*}
X'_{2}=l(2k)^{2}+(2k-1)(2kn+j)-m-X'_{1}. 
\end{align*}
Then for $j\neq 2k$ we can also check the following. 
\begin{align*}
l(2k)^{2}+2k^{2}(n-l+1)-1&\geq 2(k-x)^{2}(n-l+1)+l_{+}(2k-2x)^{2}+X'_{2}  \ (n\geq l), \\
l(2k)^{2}-1&\geq l_{+}(2k-2x)^{2}+X'_{2}  \ (n<l). 
\end{align*}
Indeed, we can compute $l(2k)^{2}+2k^{2}(n-l+1)-1-(2(k-x)^{2}(n-l+1)+l_{+}(2k-2x)^{2}+X'_{2})=2(k-x)(x-b-1)+x(2k-j-1)+m$. 
We obtain $2(k-x)(x-b-1)+x(2k-j-1)+m\geq m>0$ since we have $0<j<2k$, $b< x\leq k$ and $x\geq 1$. 
Similarly $l_{+}(2k-2x)^{2}+X'_{2}\leq l(2k)^{2}-1$. 
\par
From Steps~$1$ and $2$, we finish this proof. 

\end{proof}
\begin{proof}[Proof of Claim~$\ref{key_claim2}$]
The proof of Claim~$\ref{key_claim2}$ is the same as that of Claim~$\ref{key_claim}$. 
\par
By the same discussion, there are an $h\in\{1, \dots, 2k+1-2x\}$, an $x\in\{1, \dots, k\}$, an $s\in\{1, \dots, 2k-2x\}$ and an $\varepsilon\in\{0, 1\}$ such that $E^{m}(2k+1, (2k+1)(n+f)+j)$ is equivalent to $D^{s}(2k+1-2x, (2k+1-2x)(n+f)+h)\sqcup U_{\varepsilon}$, where $U_{0}$ is a circle in the plane and $U_{1}$ is the empty set. 
We give $E^{m}(2k+1, (2k+1)(n+f)+j)$ an orientation such that all crossings of $D^{s}(2k+1-2x, (2k+1-2x)(n+f)+h)\sqcup U_{\varepsilon}$ are positive. 
\par
Now we have supposed that for $1\leq g<k$, $j=1, \dots, 2g+1$ and $m=0, \dots, 2g$ we have $H^{i}(D^{m}(2g+1, (2g+1)(n+f)+j))=0$ if 
$i>2g(g+1)(n-l+1)+l(2g+1)^{2}$ and $n\geq l$, or $i>l(2g+1)^{2}$ and $n<l$ (recall the induction hypothesis in the proof of Lemma~$\ref{lem_cable}$ $(2)$).  
From this induction hypothesis, if $i-n_{-}+l_{-}(2k+1-2x)^{2}>2(k-x)(k-x+1)(n-l+1)+l(2k+1-2x)^{2}$ and $n\geq l$, or $i-n_{-}+l_{-}(2k+1-2x)^{2}>l(2k+1-2x)^{2}$ and $n<l$, then we have 
\begin{align*}
&H^{i}(E^{m}(2k+1, (2k+1)(n+f)+j))
=0, 
\end{align*}
where $n_{-}$ is the number of the negative crossings of $E^{m}(2k+1, (2k+1)(n+f)+j)$. 
Hence, to prove Claim~$\ref{key_claim2}$, it is sufficient to prove the following: 
\begin{align}
l(2k+1)^{2}+2k(k+1)(n-l+1)-1&\geq 2(k-x)(k+1-x)(n-l+1)\label{e3}\\
&\ \ \ +l_{+}(2k+1-2x)^{2}+n_{-}  \ \ (n\geq l), \nonumber\\
l(2k+1)^{2}-1&\geq l_{+}(2k+1-2x)^{2}+n_{-}  \ \ (n<l).\label{e4} 
\end{align}
To prove $(\ref{e3})$ and $(\ref{e4})$, 
we need to count the number of the negative crossings of $E^{m}(2k+1, (2k+1)(n+f)+j)$. 
We first count its positive crossings by dividing it into four parts as the proof of Claim~$\ref{key_claim}$. 
%
%
\par
[Step~$1$] In the case where $E^{m}(2k+1, (2k+1)(n+f)+j)$ is either type-$1$ or type-$2$:  
In part-$1$, $E^{m}(2k+1, (2k+1)(n+f)+j)$ has 
\begin{center}
$\displaystyle{\sum_{i=0}^{x-1}(l(2k+1-2i)+l(2k-2i-1))+l_{+}(2k+1-2x)^{2}}$
\end{center}
positive crossings. 
%
\par
In part-$2$, $E^{m}(2k+1, (2k+1)(n+f)+j)$ has 
$x(x-1)n+(2k+1-x)(2k-x)n$
positive crossings. 
\par
In part $3$, $E^{m}(2k+1, (2k+1)(n+f)+j)$ has at least 
$2k-m-x$ 
positive crossings (cf. Figure~$\ref{part3-1}$). 
\par
In part-$4$, $E^{m}(2k+1, (2k+1)(n+f)+j)$ has
%
$b(x-1)+a(2k-x)=b(x-1)+(j-1-b)(2k-x)$ 
positive crossings.  
\par
Hence the diagram $E^{m}(2k+1, (2k+1)(n+f)+j)$ has at least $Y_{1}$ positive crossings, where 
\begin{align*}
Y_{1}&=\sum_{i=0}^{x-1}(l(2k+1-2i)+l(2k-2i-1))+l_{+}(2k+1-2x)^2\\
&\ \ \ +x(x-1)n+(2k+1-x)(2k-x)n\\
&\ \ \ +2k-m-x\\
&\ \ \ +b(x-1)+(j-1-b)(2k-x). 
\end{align*}
From the above discussion, $E^{m}(2k+1, (2k+1)(n+f)+j)$ has at most $Y_{2}$ negative crossings, 
where
\begin{align*}
Y_{2}=l(2k+1)^{2}+2k((2k+1)n+j)-m-Y_{1}. 
\end{align*}
\par
Then for $j\neq 2k+1$ we can check the following. 
\begin{align*}
l(2k+1)^{2}+2k(k+1)(n-l+1)-1&\geq 2(k-x)(k-x+1)(n-l+1)\\
&\ \ \ +l_{+}(2k+1-2x)^{2}+Y_{2}  \ (n\geq l), \\
l(2k+1)^{2}-1&\geq l_{+}(2k+1-2x)^{2}+Y_{2}  \ (n<l). 
\end{align*}
Finally we consider the case where $j=2k+1$. If $j= 2k+1$ then $x=1$ and $E^{m}(2k+1, (2k+1)(n+f)+j)$ has $n_{-}=4k(n+1)-1+4l_{+}k+l_{-}((2k+1)^{2}-4k)$ negative crossings. 
In this case we have $l_{+}(2k-1)^{2}+2k(k-1)(n-l+1)+n_{-}= l(2k+1)^{2}+2k(k+1)(n-l+1)-1$. 
Similarly, in this case, we obtain $l(2k+1)^{2}-1\geq l_{+}(2k+1-2x)^{2}+n_{-}$ for $n<l$. 
These imply that $(\ref{e3})$ and $(\ref{e4})$ are true for $j=2k+1$.
\par
[Step~$2$] In the case where $E^{m}(2k+1, (2k+1)(n+f)+j)$ is either type-$3$ or type-$4$: 
\par
By the same discussion, in part-$1$, $E^{m}(2k+1, (2k+1)(n+f)+j)$ has 
\begin{center}
$\displaystyle{\sum_{i=0}^{x-1}(l(2k+1-2i)+l(2k-2i-1))+l_{+}(2k+1-2x)^{2}}$
\end{center}
positive crossings. 
\par
In part-$2$, $E^{m}(2k+1, (2k+1)(n+f)+j)$ has 
$x(x-1)n+(2k+1-x)(2k-x)n$
positive crossings. 
%
\par
In part-$3$, $E^{m}(2k+1, (2k+1)(n+f)+j)$ may have no positive crossing. 
\par
In part-$4$, $E^{m}(2k+1, (2k+1)(n+f)+j)$ has
$b(x-1)+a(2k-x)=b(x-1)+(j-1-b)(2k-x)$ 
positive crossings.  
\par
Hence the diagram $E^{m}(2k+1, (2k+1)(n+f)+j)$ has at least $Y'_{1}$ positive crossings, where 
\begin{align*}
Y'_{1}&=\sum_{i=0}^{x-1}(l(2k+1-2i)+l(2k-2i-1))+l_{+}(2k+1-2x)^2\\
&\ \ \ +x(x-1)n+(2k+1-x)(2k-x)n\\
&\ \ \ +b(x-1)+(j-1-b)(2k-x). 
\end{align*}
From the above discussion, $E^{m}(2k+1, (2k+1)(n+f)+j)$ has at most $Y'_{2}$ negative crossings, 
where
\begin{align*}
Y'_{2}=l(2k+1)^{2}+2k((2k+1)n+j)-m-Y'_{1}. 
\end{align*}
\par
Then for $j\neq 2k+1$ we can also check the following. 
\begin{align*}
l(2k+1)^{2}+2k(k+1)(n-l+1)-1&\geq 2(k-x)(k-x+1)(n-l+1)\\
&\ \ \ +l_{+}(2k+1-2x)^{2}+Y'_{2}  \ (n\geq l), \\
l(2k+1)^{2}-1&\geq l_{+}(2k+1-2x)^{2}+Y'_{2}  \ (n<l). 
\end{align*}

From Steps~$1$ and $2$, we finish this proof. 
\end{proof}
\begin{proof}[Proof of Claim~$\ref{key_claim3}$]
In the proof of Claim~$\ref{key_claim}$, we have proved that 
\begin{itemize}
\item there are an $h\in\{1, \dots, 2k-2x\}$, an $x\in\{1, \dots, k\}$, an $s\in\{1, \dots, 2k-2x-1\}$ and an $\varepsilon\in\{0, 1\}$ such that $E^{m}(2k, 2k(n+f)+j)$ is equivalent to $D^{s}(2k-2x, (2k-2x)(n+f)+h)\sqcup U_{\varepsilon}$, where $U_{0}$ is a circle in the plane and $U_{1}$ is the empty set, 
\item if $E^{m}(2k, 2k(n+f)+j)$ is either type-$1$ or type-$2$, then $E^{m}(2k, 2k(n+f)+j)$ has at most $X_{2}$ negative crossings,  
\item if $E^{m}(2k, 2k(n+f)+j)$ is either type-$3$ or type-$4$, then $E^{m}(2k, 2k(n+f)+j)$ has at most $X'_{2}$ negative crossings. 
\end{itemize}
From Lemma~$\ref{lem_cable}$, if $i-n_{-}+l_{-}(2k-2x)^{2}>2(k-x)^{2}(n-l+1)+l(2k-2x)^{2}$ and $n\geq l$, then we have 
\begin{align*}
H^{i}(E^{m}(2k, 2k(n+f)+j))
=H^{i-n_{-}+l_{-}(2k-2x)^{2}}(D^{s}(2k-2x, (2k-2x)(n+f)+h)\sqcup U_{\varepsilon})
=0, 
\end{align*}
where $n_{-}$ is the number of the negative crossings of $E^{m}(2k, 2k(n+f)+j)$. 
In particular, if $i>2(k-x)^{2}(n-l+1)+l_{+}(2k-2x)^{2}+n_{-}$ and $n\geq l$, then we have 
\begin{align*}
H^{i}(E^{m}(2k, 2k(n+f)+j))=0.  
\end{align*}
From the above results, to prove Claim~$\ref{key_claim3}$, it is sufficient to prove that 
\begin{enumerate}
\item if $E^{m}(2k, 2k(n+f)+j)$ is either type-$1$ or type-$2$, then 
$l(2k)^{2}+2k^{2}(n-l)-2\geq 2(k-x)^{2}(n-l+1)+l_{+}(2k-2x)^{2}+X_{2}$, \label{claim3-(1)}
\item if $E^{m}(2k, 2k(n+f)+j)$ is either type-$3$ or type-$4$, then 
$l(2k)^{2}+2k^{2}(n-l)-2\geq 2(k-x)^{2}(n-l+1)+l_{+}(2k-2x)^{2}+X'_{2}$. \label{claim3-(2)}
\end{enumerate}
We have already proved $(\ref{claim3-(2)})$ in the proof of Claim~$\ref{key_claim}$. Let us prove $(\ref{claim3-(1)})$. 
Recall $j=1, \dots, 2k-1$, $b\leq x\leq k$ and $x\geq 1$. 
Hence, if $j\leq 2k-2$ or $x\geq 2$, we obtain 
\begin{align*}
&l(2k)^{2}+2k^{2}(n-l)-2-(2(k-x)^{2}(n-l+1)+l_{+}(2k-2x)^{2}+X_{2})\\
&=-2+x(2k-j)+2(k-x)(b-x)\geq 0. \\
\end{align*} 
If $j=2k-1$ and $x=1$, then $E^{m}(2k, 2k(n+f)+j)$ is either type-$3$ or type-$4$. 
Hence we obtain 
$l(2k)^{2}+2k^{2}(n-l)-2-( 2(k-x)^{2}(n-l+1)+l_{+}(2k-2x)^{2}+X_{2})\geq 0$ 
if $E^{m}(2k, 2k(n+f)+j)$ is either type-$1$ or type-$2$. 
\end{proof}
%
%
%
\begin{proof}[Proof of Lemma~$\ref{lem1}$]
To prove Lemma~$\ref{lem1}$, we use Lemma~$\ref{lem1_lem}$ below. 
It follows from Lemma~$\ref{lem1_lem}$ that  
\begin{align*}
H^{2k(k+1)n}(D_{2k+1, (2k+1)n-1})=H^{2k(k+1)n}(D_{2k+1, (2k+1)(n-1)}) 
\end{align*} 
for any positive integers $n$ and $k$. 
From Lemma~$\ref{mainthm3}$, the right hand side is zero. 
\end{proof}
\begin{lem}\label{lem1_lem}
Let $K$ be a knot and $D$ be a knot diagram with $l_{+}$ positive crossings and $l_{-}$ negative crossings. 
Put $l=l_{+}+l_{-}$ and $f=l_{+}-l_{-}$. 
Then for any positive integer $k$ and any $n>l$, we obtain
\begin{align*}
&H^{2k(k+1)(n+l)+l}(D(2k+1, (2k+1)(n+f)-1))\\
&=H^{2k(k+1)(n+l)+l}(D(2k+1, (2k+1)(n+f-1))). 
\end{align*} 
\end{lem}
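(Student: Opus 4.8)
The plan is to run the descending-smoothing argument used for Lemma~\ref{lem2}, but now for odd-strand cables and at the fixed homological degree $i_{0}:=2k(k+1)(n+l)+l$. The two diagrams in the statement differ only in the torus-braid parameter: $(2k+1)(n+f)-1$ against $(2k+1)(n+f-1)=(2k+1)(n+f)-(2k+1)$, a drop of $2k$. Since $D^{2k}(2k+1,Q)=D^{0}(2k+1,Q-1)=D(2k+1,Q-1)$, I would write $D(2k+1,(2k+1)(n+f)-1)=D^{0}(2k+1,(2k+1)(n+f-1)+2k)$ and then descend $2k$ full cycles of the smoothing process through the intermediate diagrams $D^{m}(2k+1,(2k+1)(n+f-1)+j)$ with $0\le m\le 2k$ and $1\le j\le 2k$, ending at $D^{0}(2k+1,(2k+1)(n+f-1))=D(2k+1,(2k+1)(n+f-1))$. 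At each step the long exact sequence $(\ref{kanzenkeiretu4})$ (with base $n+f-1$) relates $H^{i_{0}}(D^{m-1})$ and $H^{i_{0}}(D^{m})$ through the connecting groups $H^{i_{0}-1}(E^{m})$ and $H^{i_{0}}(E^{m})$.

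Thus everything reduces to showing that these $E$-groups vanish in the two degrees $i_{0}$ and $i_{0}-1$; this is the odd-strand analogue of Claim~\ref{key_claim3}. Concretely, I would prove that for $n>l$,
\[
H^{i}(E^{m}(2k+1,(2k+1)(n+f-1)+j))=0\qquad\text{whenever }i>2k(k+1)(n-l)+l(2k+1)^{2}-2,
\]
for all $1\le j\le 2k$ and $1\le m\le 2k$. A direct computation gives $i_{0}=2k(k+1)(n+l)+l=2k(k+1)(n-l)+l(2k+1)^{2}$, so both $i_{0}$ and $i_{0}-1$ lie strictly above the threshold $i_{0}-2$; this is exactly why one needs the sharper ``$-2$'' bound here rather than the ``$-1$'' bound of Claim~\ref{key_claim2}. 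Once this vanishing is established, every map $H^{i_{0}}(D^{m-1})\to H^{i_{0}}(D^{m})$ in $(\ref{kanzenkeiretu4})$ is an isomorphism, and composing the whole chain yields the asserted equality.

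To prove the vanishing I would reuse the geometric analysis from the proof of Claim~\ref{key_claim2} verbatim: sliding the cap arc identifies $E^{m}(2k+1,(2k+1)(n+f-1)+j)$ with $D^{s}(2k+1-2x,(2k+1-2x)(n+f-1)+h)\sqcup U_{\varepsilon}$ for suitable $x\in\{1,\dots,k\}$, $s$, $h$ and $\varepsilon\in\{0,1\}$, and it records the same crossing counts $Y_{1},Y_{1}'$ (now with $n$ replaced by $n-1$). Because $n>l$ forces $n-1\ge l$, Lemma~\ref{lem_cable}$(2)$ applied to the $(2k+1-2x)$-strand cable bounds its homological degree by $2(k-x)(k-x+1)(n-l)+l(2k+1-2x)^{2}$; after the normalization shift this makes $H^{i}(E^{m})$ vanish once $i>2(k-x)(k-x+1)(n-l)+l_{+}(2k+1-2x)^{2}+n_{-}$. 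The required estimate is then the one-unit strengthening of the identity already verified in Claim~\ref{key_claim2} (with $n\mapsto n-1$): for type-$1$ and type-$2$ diagrams the relevant gap equals $2(k-x)(x-b)+x(2k+1-j)+x-b$, which is $\ge 2$ for every admissible $j\in\{1,\dots,2k\}$ except the single configuration $x=1$, $j=2k$, $b=1$; for type-$3$ and type-$4$ diagrams the gap equals $2(k-x)(x-b-1)+x(2k-j)+m+x-b$, which is $\ge 2$ because $m\ge 1$ and $b<x$.

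The main obstacle is precisely this last margin. As in the even case treated in Claim~\ref{key_claim3}, the single offending configuration $x=1$, $j=2k$ is never of type-$1$ or type-$2$ but always of type-$3$ or type-$4$, so it is absorbed by the better type-$3$/type-$4$ estimate instead of the failing type-$1$/type-$2$ one; checking that this classification genuinely holds (and that no other boundary case intrudes, noting that here $j$ never reaches $2k+1$) is the delicate diagrammatic bookkeeping. Granting the odd analogue of Claim~\ref{key_claim3}, the chain of isomorphisms assembles at once and gives $H^{i_{0}}(D(2k+1,(2k+1)(n+f)-1))=H^{i_{0}}(D(2k+1,(2k+1)(n+f-1)))$, which is the statement of the lemma.
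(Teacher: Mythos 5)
Your proposal is correct and follows essentially the same route as the paper: the paper's own proof descends through the diagrams $D^{m}(2k+1,(2k+1)(n+f-1)+j)$ via the long exact sequences and reduces everything to the vanishing $H^{i}(E^{m}(\cdots))=0$ for $i>l(2k+1)^{2}+2k(k+1)(n-l)-2$ (its Claim~$\ref{last}$, which is exactly your ``odd analogue of Claim~$\ref{key_claim3}$''), established precisely as you propose --- by sharpening the crossing-count estimates from the proof of Claim~$\ref{key_claim2}$ by one unit and disposing of the single tight type-$1$/type-$2$ configuration $x=1$, $j=2k$ by observing that it is in fact of type $3$ or $4$. Your bookkeeping (applying Lemma~$\ref{lem_cable}$~$(2)$ at $n-1$, hence the factor $n-l$, and pinning down the exceptional case to $b=1$) is, if anything, slightly more careful than the paper's.
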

\begin{proof}
We first compute $H^{i}(E^{m}(2k+1, (2k+1)(n+f-1)+j))$. 
In the proof of Claim~$\ref{key_claim2}$, we have proved that 
\begin{itemize}
\item there are an $h\in\{1, \dots, 2k+1-2x\}$, an $x\in\{1, \dots, k\}$, an $s\in\{1, \dots, 2k-2x\}$ and an $\varepsilon\in\{0, 1\}$ such that $E^{m}(2k+1, (2k+1)(n+f)+j)$ is equivalent to $D^{s}(2k+1-2x, (2k+1-2x)(n+f)+h)\sqcup U_{\varepsilon}$, where $U_{0}$ is a circle in the plane and $U_{1}$ is the empty set, 
\item if $E^{m}(2k+1, (2k+1)(n+f)+j)$ is either type-$1$ or type-$2$, then $E^{m}(2k+1, (2k+1)(n+f)+j)$ has at most $Y_{2}$ negative crossings,  
\item if $E^{m}(2k+1, (2k+1)(n+f)+j)$ is either type-$3$ or type-$4$, then $E^{m}(2k+1, (2k+1)(n+f)+j)$ has at most $Y'_{2}$ negative crossings. 
\end{itemize}
From Lemma~$\ref{lem_cable}$, if $i-n_{-}+l_{-}(2k+1-2x)^{2}>2(k-x)(k-x+1)(n-l+1)+l(2k+1-2x)^{2}$ and $n\geq l$, then we have 
\begin{align*}
&H^{i}(E^{m}(2k+1, (2k+1)(n+f)+j))\\
&=H^{i-n_{-}+l_{-}(2k+1-2x)^{2}}(D^{s}(2k+1-2x, (2k+1-2x)(n+f)+h))=0, 
\end{align*}
where $n_{-}$ is the number of the negative crossings of $E^{m}(2k+1, (2k+1)(n+f)+j)$. 
In particular, if $i>2(k-x)(k-x+1)(n-l+1)+l_{+}(2k+1-2x)^{2}+n_{-}$ and $n\geq l$, then we have 
\begin{align*}
H^{i}(E^{m}(2k+1, (2k+1)(n+f)+j))=0.  
\end{align*}
Then we can prove the following claim. 
\begin{claim}\label{last}
For $j=1, \dots, 2k$ and $m=1, \dots, 2k$, 
if $E^{m}(2k+1, (2k+1)(n+f)+j)$ is either type-$1$ or type-$2$, then 
\begin{align}
&l(2k+1)^{2}+2k(k+1)(n-l)-2\label{claim4-(1)}\\
&\geq 2(k-x)(k-x+1)(n-l+1)+l_{+}(2k+1-2x)^{2}+Y_{2} \nonumber\\
&\geq 2(k-x)(k-x+1)(n-l+1)+l_{+}(2k+1-2x)^{2}+n_{-}, \nonumber
\end{align}
and if $E^{m}(2k+1, (2k+1)(n+f)+j)$ is either type-$3$ or type-$4$, then 
\begin{align}
&l(2k+1)^{2}+2k(k+1)(n-l)-2 \label{claim4-(2)}\\
&\geq 2(k-x)(k-x+1)(n-l+1)+l_{+}(2k+1-2x)^{2}+Y'_{2}\nonumber\\
&\geq 2(k-x)(k-x+1)(n-l+1)+l_{+}(2k+1-2x)^{2}+n_{-}. \nonumber
\end{align}
\end{claim}
We prove Claim~$\ref{last}$ latter. 
From the above discussion and Claim~$\ref{last}$,  if $i>l(2k+1)^{2}+2k(k+1)(n-l)-2$, then $H^{i}(E^{m}(2k+1, (2k+1)(n+f)+j))=0$ for $j=1, \dots, 2k$ and $m=1, \dots, 2k$.  
Now there is the following exact sequence: 
\begin{multline*}
\rightarrow H^{i-1}(E^{m}(2k+1, (2k+1)(n+f-1)+j))\rightarrow H^{i}(D^{m-1}(2k+1, (2k+1)(n+f-1)+j)) \\
\rightarrow H^{i}(D^{m}(2k+1, (2k+1)(n+f-1)+j))\rightarrow H^{i}(E^{m}(2k+1, (2k+1)(n+f-1)+j))\rightarrow , 
\end{multline*}
where $m=1, \dots, 2k$, $n\geq 0$ and $j=1, \dots, 2k$. 
From the above result and this exact sequence, we obtain 
\begin{align*}
&H^{2k(k+1)(n+l)+l}(D(2k+1, (2k+1)(n+f)-1))\\
&=H^{2k(k+1)(n+l)+l}(D^{1}(2k+1, (2k+1)(n+f-1)+2k-1))\\
&=\cdots= \\
&=H^{2k(k+1)(n+l)+l}(D^{2k}((2k+1, (2k+1)(n+f-1)+2k-1))\\
&=H^{2k(k+1)(n+l)+l}(D^{0}(2k+1, (2k+1)(n+f-1)+2k-2))\\
&=\cdots= \\
&=H^{2k(k+1)(n+l)+l}(D(2k, 2k(n+f-1))). \\ 
\end{align*}
\end{proof}

\begin{proof}[Proof of Claim~$\ref{last}$]
We have already proved $(\ref{claim4-(2)})$ in the proof of Claim~$\ref{key_claim2}$. Let us prove $(\ref{claim4-(1)})$. 
Recall $j=1, \dots, 2k+1$, $b\leq x\leq k$ and $x\geq 1$. 
Hence if $j\leq 2k-1$ or $x\geq 2$, we obtain 
\begin{multline*}
l(2k+1)^{2}+2k(k+1)(n-l)-2-(2(k-x)(k-x+1)(n-l+1)+l_{+}(2k+1-2x)^{2}+Y_{2})\\
=-2+x(2k+1-j)+2(k-x)(b-x)+x-b\geq0. \\
\end{multline*} 
%
If $j=2k$ and $x=1$, then $E^{m}(2k+1, (2k+1)(n+f)+j)$ is either type-$3$ or type-$4$. 
Hence if $E^{m}(2k+1, (2k+1)(n+f)+j)$ is either type-$1$ or type-$2$, we obtain 
$l(2k+1)^{2}+2k(k+1)(n-l)-2\geq 2(k-x)(k-x+1)(n-l+1)+l_{+}(2k+1-2x)^{2}+Y_{2}$ 
for $j=1, \dots, 2k$. 
\end{proof}

{\bf Acknowledgements:} The author would like to express his gratitude to Hitoshi ~Murakami for his encouragement. 
He also would like to thank Yuanyuan ~Bao for her helpful comments. 
K{\'a}lm{\'a}n Tam{\'a}s gives me constructive comments and warm encouragement. 
This paper is the author's master thesis at Tokyo Institute of Technology in February 2012. 
\bibliographystyle{amsplain}
\bibliography{mrabbrev,tagami}
\end{document}